\newtheorem{theorem}{Theorem}[section]
\newtheorem{lemma}[theorem]{Lemma}
\newtheorem{proposition}[theorem]{Proposition}
\newtheorem{corollary}[theorem]{Corollary}
\newtheorem{definition}[theorem]{Definition}
\theoremstyle{definition}
\newtheorem{remark}[theorem]{Remark}
\newtheorem{notation}[theorem]{Notation}
\newtheorem{example}[theorem]{Example}
\numberwithin{equation}{section}
\newcommand{\Z}{\mathbb{Z}}
\newcommand{\R}{\mathbb{R}}
\newcommand{\C}{\mathbb{C}}
\newcommand{\Q}{\mathbb{Q}}
\newcommand{\PP}{\mathfrak{P}}
\newcommand{\F}{\mathbb{F}}
\newcommand{\p}{\mathfrak{p}}
\newcommand{\D}{\mathfrak{d}}
\title{Splitting of abelian varieties in motivic stable homotopy category}
\author{Haoyang Liu}
\address{Department of Mathematics, University of California, Santa Barbara, CA, USA}
\email{haoyangliu@ucsb.edu}
\thanks{}
\begin{document}
\begin{abstract}
    In this paper, we discuss the motivic stable homotopy type of abelian varieties. For an abelian variety over a perfect field $k$ with a rational point, it always splits off a top-dimensional cell in motivic stable homotopy category $\text{SH}(k)$. Let $k=\R$, there is a concrete splitting which is determined by the motive of X and the real points $X(\R)$ in $\text{SH}(\R)_\Lambda$ for some $\Z\subset\Lambda\subset\Q$. 
    We will also discuss this splitting from a viewpoint of the Chow-Witt correspondences.
\end{abstract}
\maketitle
\tableofcontents

\section{Introduction}
In classical homotopy theory, stable splitting is an interesting phenomenon in the stable homotopy category $\text{SH}$ \cite{MR1867354}. Such splittings provide a geometric explanation for algebraic splittings of homology and cohomology groups, as well
as other algebraic invariants of spaces such as Steenrod operations. In the motivic setting, interesting examples have been found, such as the following theorem of R\"ondigs \cite{10.1093/qmath/hap005}:

\begin{theorem}[R\"ondigs]\label{theta}
    Let $k$ be a field and $X$ be a smooth projective curve over $k$ with a rational point
    $x_0:S^{0,0}=\text{Spec}(k)_+\to X_+$. There is a splitting
    \begin{equation*}
        X_+\sim S^{0,0}\vee \mathbb{J}(X)\vee S^{2,1}
    \end{equation*}
    in the motivic stable homotopy category $\text{SH}(k)$ if and only if $X$ admits a theta characteristic.
\end{theorem}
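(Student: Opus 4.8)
\medskip
\noindent\emph{Proof strategy.} The plan is to peel off the two ``sphere'' summands one at a time and to identify the obstruction to the second with the classical obstruction to a theta characteristic. The $S^{0,0}$-summand is formal: the structure map $X_+\to\operatorname{Spec}(k)_+=S^{0,0}$ retracts $x_0$, so $\Sigma^\infty X_+\simeq S^{0,0}\vee\overline X$ with $\overline X:=\operatorname{cofib}(x_0)$. Since $\dim X=1$, the purity cofibre sequence for the closed immersion $x_0\hookrightarrow X$ provides a ``top-cell'' map
\[
c\colon\ X_+\ \longrightarrow\ X/(X\smallsetminus x_0)\ \simeq\ \operatorname{Th}(N_{x_0/X})\ \simeq\ S^{2,1},
\]
the normal bundle of a rational point of a curve being trivialisable; moreover $[S^{0,0},S^{2,1}]=\pi_{-2,-1}(\mathbb S)=0$ by Morel's connectivity theorem, so $c$ factors through $\overline X$. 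Writing $\mathbb J(X)$ for the fibre of $\overline X\to S^{2,1}$ (the object $\mathbb J(X)$ of the statement), the asserted splitting is equivalent to the splitting of the cofibre sequence $\mathbb J(X)\to\overline X\to S^{2,1}$, hence to $c$ admitting a section $S^{2,1}\to X_+$, equivalently to $c_*\colon\pi_{2,1}(\Sigma^\infty X_+)\to\pi_{2,1}(S^{2,1})=\operatorname{GW}(k)$ being surjective. So the theorem reduces to the assertion that $c_*$ is surjective if and only if $X$ admits a theta characteristic.

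For the core I would pass to Spanier--Whitehead duals. By motivic Atiyah duality (Hu), $\Sigma^\infty X_+$ is strongly dualizable in $\operatorname{SH}(k)$ with dual the Thom spectrum $X^{-T_X}=\operatorname{Th}(-T_X)$, and under the duality $c$ corresponds to the ``bottom-cell'' inclusion $S^{-2,-1}=\operatorname{Th}_{x_0}(-T_X|_{x_0})\to X^{-T_X}$ induced by $x_0\hookrightarrow X$. Hence a section of $c$ is the same datum as a retraction $X^{-T_X}\to S^{-2,-1}$, equivalently a class in $\widetilde{\mathbb S}^{\,0}\!\big(\operatorname{Th}_X(\mathcal O_X\ominus T_X)\big)$ restricting to $\langle 1\rangle$ over $x_0$ --- in other words a Thom class \emph{in the sphere spectrum}, compatible with the chosen framing at $x_0$, for the rank-zero virtual bundle $\xi:=\mathcal O_X\ominus T_X$. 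The decisive point is that $\det\xi=(\det T_X)^{-1}=\omega_X$, so this Thom-class problem is twisted by the canonical bundle.

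I would then compute the obstruction on the $1$-dimensional base $X$. Using the Rost--Schmid/Gersten resolution and Morel's identification of the $0$-line $\underline\pi_0(\mathbb S)_\ast\cong\underline K^{\mathrm{MW}}_{-\ast}$, everything relevant for a curve is concentrated in Nisnevich cohomological degrees $0$ and $1$, and the $\omega_X$-twisted degree-$1$ term --- essentially the Chow--Witt group $\widetilde{CH}^1(X,\omega_X)$ --- carries the sole obstruction to lifting $\langle1\rangle$. A line bundle $L$ with $L^{\otimes2}\cong\omega_X$ trivialises the $\omega_X$-twist canonically (twisting Milnor--Witt/Chow--Witt cohomology by a square is canonically trivial), which yields the Thom class and hence the section; conversely a section forces $\omega_X$ to be a square in $\operatorname{Pic}(X)$, i.e. its class in $\operatorname{Pic}(X)/2\operatorname{Pic}(X)$ to vanish, which is exactly the existence of a theta characteristic over $k$. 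This is a motivic incarnation of the classical fact that theta characteristics, when nonempty, form a torsor under $\operatorname{Pic}(X)[2]$.

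The main obstacle is that all of this must be run in the \emph{sphere} spectrum, which is not $SL$-oriented: even a square root of $T_X$ does not, in general, produce a Thom class in $\widetilde{\mathbb S}^{\,\ast}$. That it nonetheless does so here should be forced by $\dim X=1$, which restricts the relevant part of $\pi_{\ast,\ast}(\mathbb S)$ to Milnor--Witt $K$-theory sheaves and collapses the $\mathbb S$-Thom-class obstruction onto the Picard-theoretic one; making this collapse precise --- together with bookkeeping the Thom twists and the normalisation ``$c_*\sigma=\langle1\rangle$'' --- is where I expect the bulk of the technical work to lie. A last, more routine point is that an abstract splitting $X_+\simeq S^{0,0}\vee\mathbb J(X)\vee S^{2,1}$ is necessarily compatible with the basepoint and the top cell, as required for the ``only if'' direction.
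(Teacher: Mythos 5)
This theorem is cited from R\"ondigs' paper; the paper under review does not give its own proof of it. The closest internal result is Theorem~\ref{Shift1}, proved by exhibiting $S^{0,0}$ and $S^{2d,d}$ as retracts of $X_+$ and then invoking the connectivity Theorem~\ref{Conn} to make the two retractions simultaneous; that argument applies directly only when $\mathcal{D}(X_+)$ is already a shift of $X_+$, which for a curve is not automatic. Your sketch follows R\"ondigs' outline --- peel off $S^{0,0}$, reduce the remaining splitting to a section of the Gysin/top-cell map, dualize, and observe that the obstruction is governed by $\det(\mathcal{O}_X\ominus T_X)=\omega_X$ --- and these reductions are essentially sound (with the minor caveat that what you actually need is $\langle1\rangle\in\text{im}(c_*)$, which is equivalent to surjectivity only because $c_*$ is $\text{GW}(k)$-linear).

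The gap is the one you flag yourself: the equivalence between an $\mathbb{S}$-Thom class for $\mathcal{O}_X\ominus T_X$ and a square root of $\omega_X$ is the actual content of the theorem, and your Rost--Schmid/Postnikov discussion does not establish it --- it is not routine to show that the higher Postnikov layers of $\mathbb{S}$ contribute nothing to this Thom-class obstruction on a curve. R\"ondigs sidesteps the obstruction theory entirely by proving a direct Thom-spectrum identity: $\text{Th}(L)\simeq\text{Th}(L\otimes M^{\otimes 2})$ in $\text{SH}(k)$ for line bundles $L,M$ on a smooth scheme. Given a theta characteristic $L$ with $L^{\otimes 2}\cong\omega_X=T_X^{-1}$, this yields $\text{Th}(T_X)\simeq\text{Th}(\mathcal{O}_X)=S^{2,1}\wedge X_+$, and the ``if'' direction then follows from the shift argument of Theorem~\ref{Shift1} and Corollary~\ref{Shift2} with no further bookkeeping. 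The ``only if'' direction also needs genuine work (a mod-$2$ motivic cohomology computation showing that any such splitting forces $\omega_X$ to be a square in $\text{Pic}(X)$); your single sentence asserting that an abstract splitting is automatically compatible with the basepoint and top cell is not a proof of this. In short, you have the right framework, but the two load-bearing steps are not supplied.
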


Recall that a \emph{theta characteristic} of a smooth scheme $X$ is a line bundle $L\to X$ such that $L\otimes L\cong T_X$, where $T_X$ denotes the tangent bundle of $X$. In the context of Theorem \ref{theta}, it suffices to assume the existence of a rational point up to stable homotopy; that is, 
a section of the map $x_+:X_+\to S_+=S^{0,0}$ in the motivic stable homotopy category $\text{SH}(S)$. The existence of such a section allows the aforementioned splitting to lift to a splitting of the motive of 
$X$, where $\mathbb{J}(X)$ maps to the Jacobian variety of $X$, regarded as a motive over the base field $k$. Furthermore, the existence of a theta characteristic implies that $X$ admits an orientation.

The construction of the splitting in Theorem \ref{theta} is based on Spanier-Whitehead duality in $\text{SH}(k)$ as well as a connectivity theorem of Voevodsky (cf.\cite{10.1093/qmath/hap005}). Consequently, the values of any (co)homology theory representable in $\text{SH}(k)$ decompose accordingly.
And the same type of splitting can also be achieved for smooth projective varieties over $k$ with trivial tangent bundle, e.g. abelian varieties. However, just as the case of curve, we can still only split off the top cells. For an abelian variety with dimension greater than 1,
this splitting is not so satisfactory. So the goal of this paper is to work on some localized categories of $\text{SH}(k)$ to get more splittings.

\subsection{Main theorem}

After inverting 2 as an endomorphism of identity, $\text{SH}(k)$ will decompose into $\text{SH}^+(k)$ and $\text{SH}^-(k)$, and every object also decomposes into two pieces. One may regard the plus part as ``oriented" part and the minus part as ``unoriented'' part. For the plus part, 
we can compare it with motives via the motivic functor $M$ \cite{Bachmann_20182}\cite{bondarko2019infinite}. It turns out that $M$ is conservative when $k$ is perfect (\cite{Bachmann_20182}, Corollary 4 and Theorem 9) and if we rationalize, we actually get an equivalence of categories. For the minus part, if $k\hookrightarrow\R$, we can compare it with the classical stable homotopy category via the real realization functor. If $k=\R$, the functor
also turns out to be an equivalence of categories. So if we understand the decomposition in each piece concretely, then we can get back a decomposition in a localization of $\text{SH}(k)$.

The decomposition of the rational Chow motive of an abelian variety is already known \cite{Deninger1991}. And for a real abelian variety, we can understand the stable splitting of its real points by knowing the topology of one connected component and the number of connected components.
Combining this data, we get a more sophisticated splitting:
\begin{theorem}\label{abeliansplitting}
    If $X$ is a real abelian variety of dimension $g$ with a rational point $x_0:S^{0,0}\to X_+$. And it satisfies the condition in Theorem \ref{components1}, i.e., $X$ is an absolutely simple abelian variety over $\R$,
    admitting sufficiently many complex multiplications. We have the following splitting in $\text{SH}(\R)_{\Lambda}$ for $(2g)!\in\Lambda^{\times}$:
    \begin{equation*}
        X_+\sim \bigvee_{i=0}^{g-1}(\bigvee_{k=0}^{\lfloor\frac{i}{2}\rfloor} (S^{2k,k}\vee S^{2(k+g-i),k+g-i})\wedge\mathbb{J}_{i-2k}(X))\vee \bigvee_{k=0}^{\lfloor\frac{g}{2}\rfloor} (S^{2k,k}\wedge\mathbb{J}_{g-2k}(X)) \vee \bigvee^{n(X)}\bigvee^{g}_{i=0}\vee^{{g\choose {i}}} S^{i,0}
    \end{equation*}
    where $n(X)$ is the number of the connected components of $X(\R)$. A concrete formula of $n(X)$ is given by Theorem \ref{components1}. Moreover, if $\Lambda=\Q$, $\mathbb{J}_i(X)$ is a component of the motivic spectrum associated to a product of curves.
\end{theorem}
We can give a concrete formula of $n(X)$ if we know the endomorphism ring of $X$ and its complexification. This boils down to the computation of $\Z/2$-group cohomology.

\subsection{Outline of the paper}

In Section 2, we will revisit the construction of $\text{SH}(k)$ and $\text{DM}(k)$, and also some basic properties of these two categories and their localizations. 
In Section 3, we will revisit the splitting of abelian varieties in $\text{SH}(k)$ using the same idea as Theorem \ref{theta}.
In Section 4, we will introduce the construction of $\text{SH}(k)^+$ and $\text{SH}(k)^-$ after inverting 2 and some useful and important results to understand them. 
In Section 5, we will revisit the Chow motives and results of decomposition of the Chow motive of an abelian variety, which are related to the splitting in the plus part.
In Section 6, we will revisit and discuss the topology of the real points of a real abelian variety which will leads us to the splitting in the classical stable homotopy category, which is related to the splitting in the minus part. 
In Section 7, we will present the main result and some examples.

\subsection{Acknowledgements}
The author first wants to thank Aravind Asok for introducing him to this topic and this project and also for his helpful advices. He is also grateful to Eric Friedlander, Tianle Liu and Masoud Zargar for helpful discussions. Special thanks goes to Yeqin Liu and Keyao Peng for their helpful suggestions for a draft of this paper.

\section{Preliminaries}\label{prem}
In this section we shall work over a fixed perfect base field $k$. We will follow \cite{Bachmann_20182} and \cite{bondarko2019infinite}.

\begin{notation}

For categories $\mathcal{C}, \mathcal{D}$ we write $\mathcal{C}\subset \mathcal{D}$ if $\mathcal{D}$ is a full subcategory of $\mathcal{C}$.

For a category $\mathcal{C}$ and $X, Y\in\text{Obj}(\mathcal{C})$, the set of $\mathcal{C}$-morphisms from $X$ to $Y$ will be denoted by $\mathcal{C}(X,Y)$.

In the following $\mathcal{C}$ is a triangulated category closed with respect to all small coproducts.

\end{notation}

\begin{definition}\cite{bondarko2019infinite}
    We will say that an object $M$ of $\mathcal{C}$ is \emph{compact} whenever the functor $\mathcal{C}(M,-)$ respects coproducts. A class $\{C_i\}\subset\text{Obj}(\mathcal{C})$ generates a subcategory $\mathcal{D}\subset \mathcal{C}$ as a \emph{localizing}
    subcategory if $\mathcal{D}$ equals the smallest full strict triangulated subcategory of $\mathcal{C}$ that is closed with respect to small coproducts and contains $\{C_i\}$. Moreover, $\{C_i\}$ \emph{compcatly generates} $\mathcal{C}$ 
    if $\mathcal{C}$ is a set, all $C_i$ are compact in $\mathcal{C}$ and $\{C_i\}$ generates $\mathcal{C}$ as its own localizing subcategory. We will say that $\mathcal{C}$ is compactly generated whenever there exists some set of compact generators like above. 
    We will denote the full subcategory of compact objects of $\mathcal{C}$ by $\mathcal{C}^c$.
\end{definition}

We also recall some basics on ``localizing coefficients" in a triangulated category. Below $S\subset\Z$ will always be a set of prime numbers and the ring 
$\Z[S^{-1}]$ will be denoted by $\Lambda$.

\begin{proposition}\cite{bondarko2019infinite}
    Assume that $\mathcal{C}$ is compactly generated by small subcategory $\mathcal{C}'$. Denote the localizing subcategory of $\mathcal{C}$ by $\mathcal{C}_{S-tors}$ and it is (compactly) generated by $\text{Cone}(c'\stackrel{\times s}{\to}c')$ for $c'\in\text{Obj}(\mathcal{C}')$, $s\in S$.
    
    Then the following statements are valid.

    1.\:$\mathcal{C}_{S-tors}$ also contains all cones of $c\stackrel{\times s}{\to}c$ for $c\in\text{Obj}(\mathcal{C})$ and $s\in S$.

    2.\:The Verdier quotient category $\mathcal{C}_{\Lambda}=\mathcal{C}/\mathcal{C}_{S-tors}$ exists. The localization functor $l:\mathcal{C}\to \mathcal{C}_{\Lambda}$ respects all coproducts and converts compact objects into compact ones.
    Moreover, $\mathcal{C}_{\Lambda}$ is generated by $l(Obj(\mathcal{C}'))$ as a localizing subcategory.

    3.\:For any $c\in Obj(\mathcal{C}), c'\in Obj(\mathcal{C}')$, we have $\mathcal{C}_{\Lambda}(l(c),l(c'))\cong \mathcal{C}(c,c')\otimes_{\Z}\Lambda$.

    4.\:$l$ possesses a right adjoint $G$ which is a full embedding functor. The essential image of $G$ consists of those $M\in Obj(\mathcal{C})$ such that $s\cdot id_M$ is an automorphism for any $s\in S$, i.e. $G(\mathcal{C})$ is the maximal full $\Lambda$-linear subcategory
    of $\mathcal{C}$.
\end{proposition}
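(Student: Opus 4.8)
The four assertions form the standard package of facts about a smashing Bousfield localization, and I would organize the argument around the \emph{telescope}: for $c\in\mathrm{Obj}(\mathcal{C})$ let $c[S^{-1}]$ denote the homotopy colimit of $c\xrightarrow{\times s_1}c\xrightarrow{\times s_2}c\to\cdots$, where $(s_n)_{n\geq 1}$ enumerates $S$ with every element repeated infinitely often. The plan is: prove item~1 by a direct localizing-subcategory argument; deduce the existence of $\mathcal{C}_\Lambda$, the compactness statement, and the generation statement of item~2 from Neeman's localization theorem; and then identify the right adjoint $G$ of item~4 with $c\mapsto c[S^{-1}]$, after which item~3 and the description of the essential image are short computations.

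For item~1, fix $s\in S$ and let $\mathcal{D}_s$ be the full subcategory of objects $c$ with $\mathrm{Cone}(c\xrightarrow{\times s}c)\in\mathcal{C}_{S-tors}$. Since $\times s$ is central it commutes with every morphism, so any distinguished triangle yields a genuine morphism of triangles with all vertical maps equal to $\times s$; the $3\times3$ lemma then produces a distinguished triangle joining the three mapping cones, so $\mathcal{D}_s$ is closed under triangles. It is plainly closed under (de)suspension and small coproducts, and closed under retracts because $\mathcal{C}_{S-tors}$ is. As $\mathcal{D}_s$ contains $\mathcal{C}'$ and $\mathcal{C}'$ generates $\mathcal{C}$ as a localizing subcategory, $\mathcal{D}_s=\mathcal{C}$; applying an octahedron to $c\xrightarrow{\times s}c\xrightarrow{\times s'}c$ upgrades this to $\mathrm{Cone}(c\xrightarrow{\times m}c)\in\mathcal{C}_{S-tors}$ for every $m$ in the multiplicative monoid generated by $S$.

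For item~2, the cones $\mathrm{Cone}(c'\xrightarrow{\times s}c')$ with $c'\in\mathrm{Obj}(\mathcal{C}')$ and $s\in S$ form a \emph{set} of objects that are compact in $\mathcal{C}$; by Neeman's localization theorem $\mathcal{C}_{S-tors}$ is a smashing localizing subcategory, so $\mathcal{C}_\Lambda$ is locally small, $l$ has a fully faithful right adjoint $G$ that preserves coproducts, and hence $l$ preserves compactness. Because $l$ is essentially surjective and preserves coproducts, suspensions, and triangles, and $\mathcal{C}$ is generated by $\mathrm{Obj}(\mathcal{C}')$ as a localizing subcategory, the family $l(\mathrm{Obj}(\mathcal{C}'))$ generates $\mathcal{C}_\Lambda$ as a localizing subcategory. (One can also bypass the general theorem by checking directly that $c\mapsto c[S^{-1}]$ is a Bousfield localization functor with kernel $\mathcal{C}_{S-tors}$.)

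The heart of the matter is the identification $Gl(c)\cong c[S^{-1}]$ for all $c$. The cofiber of the canonical map $c\to c[S^{-1}]$ is the homotopy colimit of the cones $\mathrm{Cone}(c\xrightarrow{\times m_n}c)$, hence lies in $\mathcal{C}_{S-tors}$ by item~1. Next, $c[S^{-1}]\in\mathcal{C}_{S-tors}^{\perp}$: for compact $c''$ one has $\mathcal{C}(c''[j],c[S^{-1}])\cong\mathcal{C}(c''[j],c)\otimes_{\Z}\Z[S^{-1}]$ because mapping out of a compact object commutes with this homotopy colimit, on which each $\times s$ acts invertibly; since a morphism of $\mathcal{C}$ is an isomorphism as soon as it induces isomorphisms on $\mathcal{C}(c''[j],-)$ for $c''\in\mathrm{Obj}(\mathcal{C}')$, this forces $\times s$ to be an automorphism of $c[S^{-1}]$, and then the long exact sequence kills $\mathcal{C}(\mathrm{Cone}(c''\xrightarrow{\times s}c''),c[S^{-1}])$; as the generating cones are a localizing generating family for $\mathcal{C}_{S-tors}$, we get $\mathcal{C}(\mathcal{C}_{S-tors},c[S^{-1}])=0$. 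By the defining property of the Bousfield localization this yields $Gl(c)\cong c[S^{-1}]$. Item~3 is then $\mathcal{C}_\Lambda(l(c'),l(c))\cong\mathcal{C}(c',Gl(c))\cong\mathcal{C}(c',c[S^{-1}])\cong\mathcal{C}(c',c)\otimes_{\Z}\Lambda$, the last step using compactness of $c'\in\mathrm{Obj}(\mathcal{C}')$. For item~4: $G$ is fully faithful with essential image $\mathcal{C}_{S-tors}^{\perp}$ by the general formalism, and it remains to identify $\mathcal{C}_{S-tors}^{\perp}$ with $\{M:\times s\text{ is an automorphism of }M\ \forall s\in S\}$. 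If $\times s$ is invertible on $M$ it is invertible on every $\mathcal{C}(-,M)$, so the long exact sequence gives $\mathcal{C}(\mathrm{Cone}(c'\xrightarrow{\times s}c')[j],M)=0$, and since $\{t:\mathcal{C}(t[j],M)=0\ \forall j\}$ is localizing it contains $\mathcal{C}_{S-tors}$. Conversely, if $M\in\mathcal{C}_{S-tors}^{\perp}$ then $\mathrm{Cone}(c\xrightarrow{\times s}c)\in\mathcal{C}_{S-tors}$ for all $c$ by item~1, so $\times s$ is an automorphism of every $\mathcal{C}(c,M)$; taking $c=M$ and using that a left $s$-inverse of $\mathrm{id}_M$ is automatically a two-sided inverse (again by centrality of $s$) shows $\times s$ is an automorphism of $M$. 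Finally $\mathcal{C}_{S-tors}^{\perp}$ is itself $\Lambda$-linear, since $\times s$ is invertible on the target, and contains every full $\Lambda$-linear subcategory, so it is the maximal one. I expect the main obstacle to be exactly this last block: invoking the smashing-localization machinery correctly (so that the quotient is locally small with coproduct-preserving $G$) and then pinning $G$ down via the telescope --- the verification that $c[S^{-1}]$ is right-orthogonal to all of $\mathcal{C}_{S-tors}$ is where the compact generators, the long exact sequences, and the ``compact objects detect isomorphisms'' principle all have to be combined.
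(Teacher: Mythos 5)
The paper offers no argument here: its ``proof'' is a one-line citation of Proposition~5.6.2(I) of Bondarko's paper on torsion pairs, so your write-up plays the role of the missing proof. The route you take --- Neeman's localization theorem for compactly generated triangulated categories to obtain the Verdier quotient, together with the telescope $c\mapsto c[S^{-1}]$ to identify the composite $G\circ l$ and thereby read off items~3 and~4 --- is the standard one for this package of facts and is, I expect, essentially what Bondarko's proof amounts to. Item~1 via the localizing subcategory $\mathcal{D}_s$ closed under triangles by centrality of $\times s$ and the octahedron, and the identification of $\mathcal{C}_{S\text{-}tors}^{\perp}$ in item~4 via the left/right $s$-inverse trick on $\mathcal{C}(M,M)$, are both correct.

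One thing you should flag explicitly rather than pass over silently: in item~3 you prove
$\mathcal{C}_{\Lambda}(l(c'),l(c))\cong\mathcal{C}(c',c)\otimes_{\Z}\Lambda$
with the \emph{compact} object $c'\in\mathrm{Obj}(\mathcal{C}')$ as the \emph{source}, which is the correct form. The version printed in the proposition, $\mathcal{C}_{\Lambda}(l(c),l(c'))\cong\mathcal{C}(c,c')\otimes_{\Z}\Lambda$ with arbitrary $c$ as source and compact $c'$ as target, is false as stated. For a counterexample, take $\mathcal{C}=D(\Z)$, $\mathcal{C}'$ the perfect complexes, $S=\{p\}$, $c=(\Z[1/p]/\Z)[-1]$ and $c'=\Z$. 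Then $\Z[1/p]/\Z$ is a filtered homotopy colimit of the $\Z/p^{n}\Z$, hence lies in $\mathcal{C}_{S\text{-}tors}$, so $l(c)=0$ and the left-hand side vanishes; but $\mathcal{C}(c,c')\cong\mathrm{Ext}^{1}_{\Z}(\Z[1/p]/\Z,\Z)$ is the ring of $p$-adic integers, and tensoring with $\Lambda=\Z[1/p]$ gives the field of $p$-adic numbers, which is nonzero. Your chain $\mathcal{C}_{\Lambda}(l(c'),l(c))\cong\mathcal{C}(c',Gl(c))\cong\mathcal{C}(c',c[S^{-1}])\cong\mathcal{C}(c',c)\otimes_{\Z}\Lambda$ uses compactness of the source precisely to commute $\mathcal{C}(c',-)$ past the sequential colimit, and that is what forces the variable placement you chose; your proof thus tacitly corrects a misstatement in the proposition as quoted.
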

\begin{proof}
    See Proposition 5.6.2(I) of \cite{bondarko2019torsion}.
\end{proof}

\begin{remark}
    For a triangulated category $\mathcal{C}$, its $\Lambda$-linear version will be denoted by $\mathcal{C}_{\Lambda}$.
\end{remark}

Now we recall the construction of $\text{SH}(k)$ and $\text{DM}(k)$ \cite{RONDIGS2008689}. Let $Sm_k$ be the category of smooth schemes over the perfect field $k$,
and $Cor_k$ be the category whose objects are the smooth schemes and whose morphisms are the finite correspondences. We write $Shv_k$ (respectively $Shv^{tr}_k$) for the categories
of Nisnevich sheaves. Write $R:Sm_k\to Shv_k$ and $R_{tr}:Cor_k\to Shv^{tr}_k$ for the functors sending an object to the sheaf it represents.

There is a natural functor $M:Sm_k\to Cor_k$ defined by $M(X)=X$ and $M(f)=\Gamma_f$, the graph of $f$. This induces a functor $U:Shv^{tr}_k\to Shv_k$ via $(UF)(X)=F(M(X))$. The functor $U$ admits a left adjoint
$M:Shv_k\to Shv^{tr}_k$, which is the unique colimit-preserving functor satisfying $M(R(X))=R_{tr}(X)$. Let $Shv_{k,*}$ denote the category of pointed sheaves. There is a functor $R_+:Sm_k\to Shv_{k,*}$
defined by adjoining a disjoint base point. The objects in the essential image $U(Shv^{tr}_k)$ are canonically pointed by zero and this gives rise to a new adjunction: $\tilde{M}:Shv_{k,*}\leftrightarrows Shv^{tr}_k:U$.

Passing to simplicial objects and extend $M$ and $U$ levelwise, we obtain an adjunction $\tilde{M}:\Delta^{op}Shv_{k,*}\leftrightarrows \Delta^{op}Shv^{tr}_k:U$. We denote by $Spt_k$ the category of 
$S^{2,1}:=S^1\wedge\mathbb{G}_m$-spectra in $\Delta^{op}Shv_{k,*}$ and by $Spt^{tr}_k$ the category of $M(S^{2,1})$-spectra in $\Delta^{op}Shv^{tr}_{k}$. The adjunction extends, and we have the following commutative diagram.
\begin{equation*}
    \begin{tikzcd}
        Sm_k \arrow[r,"M"] \arrow[d,"R_+"] & Cor_k \arrow[d,"R_{tr}"]\\
        Shv_{k,*} \arrow[r, bend left=10, "\tilde{M}"] \arrow[d] & Shv^{tr}_k \arrow[l, bend left=10, "U"] \arrow[d]\\
        \Delta^{op}Shv_{k,*} \arrow[r, bend left=10, "\tilde{M}"] \arrow[d,"\Sigma^{\infty}"] & \Delta^{op}Shv^{tr}_k \arrow[l, bend left=10, "U"] \arrow[d,"\Sigma^{\infty}"]\\
        Spt_k \arrow[r, bend left=10, "M"]  & Spt^{tr}_k \arrow[l, bend left=10, "U"] 
    \end{tikzcd}
\end{equation*} 

\begin{definition}
    One may put the projective local model structures on the lower square and then the adjunctions become Quillen adjunctions, so pass through localization. Contracting the affine line yields the $\mathbb{A}^1$-local model structures. The homotopy 
category of $Spt_k$ (in this model structure) is denoted by $\text{SH}(k)$ and is called the motivic stable homotopy category. Similarly the homotopy category of $Spt^{tr}_k$ is denoted by $\text{DM}(k)$.
\end{definition}
 
$\text{DM}(k)$ is essentially a bigger version of the category constructed by Voevodsky which we may call it ``big derived category of motives''\cite{RONDIGS2008689}. We have the following commutative diagram:
\begin{equation*}
    \begin{tikzcd}
        Sm_k \arrow[r,"M"] \arrow[d,"\Sigma^{\infty}(\bullet_+)"] & Cor_k \arrow[d]\\
        \text{SH}(k) \arrow[r, bend left=10, "M"]& \text{DM}(k) \arrow[l, bend left=10, "U"]
    \end{tikzcd}
\end{equation*} 
In all its incarnations, $M$ is a symmetric monoidal functor, and one may regard $M$ as a motivic analogue of ``the Hurewicz homomorphism".

The categories $\text{SH}(k)$ and $\text{DM}(k)$ are tensor triangulated categories. In the following proposition we recall some of their basic properties.
\begin{proposition}\cite{bondarko2019infinite}\label{tensor}
If we work on the smooth $k$-varieties $\text{SmVar}_k$, there exists functors $\text{SmVar}\to\text{SH}(k): X\mapsto \Sigma^{\infty,\infty} X_+$ and $M_{gm}:\text{SmVar}\to\text{DM}(k)$(And $M_{gm}(X)$ actually falls in the Voevodsky's triangulated category of motives $\text{DM}_{gm}(k)$ \cite{59}). 
By the construction above, we have $M:\text{SH}(k)\to\text{DM}(k)$. This functor is an exact functor and respects coproducts and the compactness objects. We have $M(\Sigma^{\infty,\infty} X_+)\cong M_{gm}(X)$ for any $X\in\text{SmVar}$.
And also as mentioned in the construction above, $M$ has a right adjoint $U$ that respects coproducts. The category $\text{SH}(k)$ and $\text{DM}(k)$ are both compactly generated.
\end{proposition}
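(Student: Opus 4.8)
The plan is to obtain each clause of the proposition from the construction recalled above together with formal properties of Quillen adjunctions and of compactly generated triangulated categories; none of this is new, and I would cite \cite{RONDIGS2008689} and \cite{bondarko2019infinite} for the ingredients that are not purely formal. First, for the existence of the functors: composing the inclusion $\text{SmVar}_k\hookrightarrow Sm_k$ with the vertical arrows $R_+$ and $\Sigma^{\infty}$ of the big diagram lands in the model category $Spt_k$, and inverting the $\mathbb{A}^1$-Nisnevich-local weak equivalences gives $\Sigma^{\infty,\infty}(-)_+\colon\text{SmVar}_k\to\text{SH}(k)$; replacing $R_+$ by $R_{tr}\circ M$ and $Spt_k$ by $Spt^{tr}_k$ gives $M_{gm}\colon\text{SmVar}_k\to\text{DM}(k)$, and the fact that its image lies in $\text{DM}_{gm}(k)$ is classical \cite{59}.

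For exactness and coproducts: the lower square of the big diagram is a Quillen adjunction for the projective local model structures, and it remains one after the $\mathbb{A}^1$-localizations, so the functor $M\colon\text{SH}(k)\to\text{DM}(k)$ induced on homotopy categories by the left Quillen functor $M$ is triangulated (exact), and being a left adjoint it preserves arbitrary coproducts; its right adjoint $U$ is the functor induced by the right Quillen functor and is likewise defined. The identification $M(\Sigma^{\infty,\infty}X_+)\cong M_{gm}(X)$ is then a diagram chase through the big diagram: at the level of pointed sheaves the defining property of the left adjoint $\tilde M$ gives $\tilde M(R_+(X))=\tilde M(R(X)_+)=R_{tr}(X)$, which is exactly the sheaf-level ingredient of $M_{gm}(X)$, and this identification is compatible with passing to simplicial objects, then to $S^{2,1}$- respectively $M(S^{2,1})$-spectra, and with the local model structures, so it descends to the claimed isomorphism in $\text{DM}(k)$. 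Compact generation of $\text{SH}(k)$ by the set $\{\Sigma^{\infty,\infty}X_+\wedge S^{a,b}\colon X\in\text{SmVar}_k,\ a,b\in\Z\}$, and of $\text{DM}(k)$ by the corresponding Tate twists and shifts of the $M_{gm}(X)$, is standard; in particular each such generator is compact and $M$ carries generators to generators.

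The one assertion that is not purely formal is that $U$ respects coproducts, and I expect this, rather than any of the bookkeeping above, to be the point that needs care. It follows from the fact that the forgetful functor $Shv^{tr}_k\to Shv_k$ is exact and commutes with filtered colimits, so that $U$ on the level of spectra commutes with the homotopy colimits that compute coproducts in $\text{DM}(k)$; the details are in \cite{RONDIGS2008689}, and one may alternatively invoke the treatment in \cite{bondarko2019infinite}. Granting this, the fact that $M$ preserves compact objects is formal: for $c\in\text{SH}(k)$ compact, $\text{DM}(k)(M(c),-)\cong\text{SH}(k)(c,U(-))$ is a composite of coproduct-preserving functors, so $M(c)$ is compact. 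This finishes the proof modulo the cited results.
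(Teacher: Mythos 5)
The paper itself gives no proof of Proposition \ref{tensor}: it is stated as a citation to \cite{bondarko2019infinite}, and the only thing the surrounding text adds is the remark that the statement persists after passing to $\Lambda$-coefficients. So there is no ``paper's proof'' to compare against; what you have written is a reconstruction of an argument the paper outsources. Evaluated on its own terms, your reconstruction is essentially correct, and you have rightly identified the one clause that is not formal bookkeeping, namely that the right adjoint $U$ preserves coproducts.

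One remark on strategy. You derive ``$M$ preserves compact objects'' as a \emph{consequence} of ``$U$ preserves coproducts,'' and you establish the latter by a model-level argument about the forgetful functor $Shv^{tr}_k\to Shv_k$ being exact and commuting with filtered colimits. This works, but it forces you to lean on a point-set claim that requires genuine care (it is where you yourself flag the difficulty). The cleaner route, and the one closer in spirit to \cite{bondarko2019infinite}, runs in the opposite direction. First observe directly that $M$ carries the compact generators $\Sigma^{\infty,\infty}X_+\wedge S^{a,b}$ of $\text{SH}(k)$ to the objects $M_{gm}(X)(b)[a]$, which are compact in $\text{DM}(k)$; since $\{Y\in\text{SH}(k):M(Y)\text{ is compact}\}$ is a thick subcategory containing the generators, $M$ preserves \emph{all} compact objects. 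Then the general theory of compactly generated triangulated categories (Neeman's criterion: a coproduct-preserving exact functor between compactly generated triangulated categories has a coproduct-preserving right adjoint if and only if it preserves compact objects) yields that $U$ preserves coproducts, with no model-theoretic input beyond what you already use to identify $M(\Sigma^{\infty,\infty}X_+)\cong M_{gm}(X)$. Your version is not wrong, but you are proving the harder statement first and getting the easier one for free; the reverse order is both shorter and more robust.

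A small imprecision worth tightening: the set of compact generators for $\text{SH}(k)$ should be phrased with care (one usually takes $X$ ranging over smooth schemes and only desuspensions by $T=S^{2,1}$, rather than all bigraded spheres, though the resulting thick subcategory of compacts is the same), and the compactness of $\Sigma^{\infty,\infty}X_+$ is itself a nontrivial input that deserves a citation rather than the word ``standard.'' Neither affects the logic, but since the whole proposition is a citation in the original, the value of your write-up is in making precisely these hypotheses visible.
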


Note that Proposition \ref{tensor} is also true for the case with coefficients $\Lambda$.

\section{Splitting in the motivic stable homotopy category}\label{section3}
In this section we recall the splitting of abelian varieties in $\text{SH}(k)$ and techniques in \cite{10.1093/qmath/hap005}. 

Let $S$ be a Noetherian scheme of finite Krull dimension over a field $k$. Denote by $\text{SH}(S)$ the motivic stable homotopy category over the base scheme $S$. This category is defined as the homotopy category of a model category of motivic spectra over $S$, constructed analogously to the method described in Section \ref{prem}.
A \emph{motivic space} $A$ over $S$ is a presheaf on the site $\text{Sm}_S$ (the category of smooth separated $S$-schemes) with values in the category 
of simplicial sets. A \emph{motivic spectrum} $E$ over $S$ consists of a sequence $(E_0,E_1,...)$ of pointed motivic spaces over $S$, together with 
a sequence of structure map $\sigma^{E}_{n}:E_n\wedge S^{2,1}\to E_{n+1}$, where the smash product of pointed simplicial presheaves is defined 
sectionwise. Here $S^{2,1}=\mathbb{A}^1_S/\mathbb{A}^1_S\setminus\{ 0\}$ denotes the Thom space of the trivial line bundle over $S$. A smooth $S$-scheme $x:X\to S$ determines a representable pointed simplicial presheaf by adjoining a disjoint base point. 
The associated $S^{2,1}$-suspension spectrum is denoted as $\Sigma^{\infty, \infty} X_+$ (or simply $X_+$). Its $n$-th structure map is the identity on $X_+\wedge S^{2n,n}$, where $S^{2n,n}=S^{2(n-1),n-1}\wedge S^{2,1}$.

The category $\text{SH}(S)$ is closed symmetric monoidal under the smash product $E\wedge F$, with unit object $\mathbb{I}_S:=S_+=S^{0,0}$.

If $f:S\to S'$ is a morphism of base schemes, there is an adjoint pair of functors:
\begin{equation*}
    f_*:\text{SH}(S)\rightleftharpoons\text{SH}(S'):f^*.
\end{equation*}
If the morphism $f$ is smooth, then $f^*$ admits a left adjoint $f_\sharp:\text{SH}(S)\to\text{SH}(S')$, and the projection formula holds.

Let $p:V\to S$ be a vector bundle over $S$, with zero section $z:S\to V$. Let $\text{Th}(V)$ denote the $S^{2,1}$-suspension spectrum of the pointed (simplicial)
presheaf that assigns to each $U\to S\in\text{Sm}_S$ the quotient set $\text{Hom}_{\text{Sm}_S}(U,V)/Hom_{\text{Sm}_S}(U,V\setminus z(S))$. This is called the \emph{Thom spectrum}
of the bundle $p:V\to S$. In the special case where $V\cong\mathbb{A}^n\to S$ is a trivial vector bundle, the Thom spectrum $\text{Th}(\mathbb{A}^n)$ is simply the $n$-fold smash product of the 
$S^{2,1}$-suspension spectrum of $S^{2,1}$ itself. Let $S^{1,0}$ be the $S^{2,1}$-suspension spectrum of the constant pointed simplicial presheaf sending every $U\to S\in\text{Sm}_S$
to $\Delta^1/\partial\Delta^1$. The relation $S^{2,1}\simeq S^{1,0}\wedge (\mathbb{A}^1_S\setminus \{ 0\}, 1)$ shows that $S^{1,0}$ is invertible under the smash product as well.
Denote $S^{1,1}=(\mathbb{A}^1\setminus \{0\}, 1)$. Then for every pair $(p,q)$ of integers there is a bigraded motivic sphere
\begin{equation*}
    S^{p,q}:= S^{p-2q,0}\wedge S^{2q,q}\in\text{SH}(S),
\end{equation*}
which is invertible with respect to the smash product.

The construction of the splitting relies on a special case of a connectivity theorem established by Voevodsky \cite{10.1093/qmath/hap005}.

\begin{theorem}\label{Conn}
    Let $S=\text{Spec}(k)$ be the spectrum of a field and $p,q\in\mathbb{Z}$. Then
    \begin{equation*}
        \text{Hom}_{\text{SH}(S)}(S^{0,0}, S^{p,q})=0
    \end{equation*}
    whenever $p>q$.
\end{theorem}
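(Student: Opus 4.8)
The plan is to recognize the displayed Hom-group as a bigraded stable homotopy group of the motivic sphere spectrum sitting in negative simplicial degree, and then to invoke Morel's stable $\mathbb{A}^1$-connectivity theorem (the connectivity result of Voevodsky used in \cite{10.1093/qmath/hap005}); everything else is routine bookkeeping.

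First I would reformulate. Every bigraded sphere $S^{a,b}$ is invertible in $\text{SH}(k)$ under the smash product, with inverse $S^{-a,-b}$, so smashing with $S^{-p,-q}$ yields an isomorphism
\begin{equation*}
\text{Hom}_{\text{SH}(k)}(S^{0,0},S^{p,q})\;\cong\;\text{Hom}_{\text{SH}(k)}(S^{-p,-q},S^{0,0}).
\end{equation*}
From the relations recalled above, $S^{-p,-q}\simeq (S^{1,0})^{\wedge(q-p)}\wedge\mathbb{G}_m^{\wedge(-q)}$; since $S^{0,0}=\text{Spec}(k)_+$ is the unit of the smash product, the right-hand group is exactly the value at $\text{Spec}(k)$ of the presheaf
\begin{equation*}
U\;\longmapsto\;\text{Hom}_{\text{SH}(k)}\bigl(U_+\wedge (S^{1,0})^{\wedge(q-p)}\wedge\mathbb{G}_m^{\wedge(-q)},\,S^{0,0}\bigr),
\end{equation*}
whose Nisnevich sheafification is, by definition, the homotopy sheaf $\underline{\pi}_{q-p}(S^{0,0})_{-q}$ of the sphere spectrum (simplicial degree $q-p$, weight $-q$, in Morel's indexing). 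The hypothesis $p>q$ says precisely that the simplicial degree $q-p$ is strictly negative.

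Next I would apply Morel's connectivity theorem: over a perfect field $k$ (which covers all cases needed in this paper, in particular $k=\R$) the motivic sphere spectrum $S^{0,0}$ is connective for the homotopy $t$-structure on $\text{SH}(k)$; equivalently, its homotopy Nisnevich sheaves $\underline{\pi}_n(S^{0,0})_*$ on $\text{Sm}_k$ vanish for all $n<0$ and all weights. In particular $\underline{\pi}_{q-p}(S^{0,0})_{-q}=0$. It then remains to descend to the point: the group in question is the value at $\text{Spec}(k)$ of a presheaf whose Nisnevich sheafification is $\underline{\pi}_{q-p}(S^{0,0})_{-q}$, and $\text{Spec}(k)$ admits no nontrivial Nisnevich covers — any Nisnevich cover of $\text{Spec}(k)$ refines to the identity cover, one of the covering \'etale maps necessarily having a $k$-rational point over the unique point of $\text{Spec}(k)$ — so sheafification does not change the value there. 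Hence the Hom-group is zero, proving the theorem. The same manipulation shows that the bound is sharp: for $p=q$ it produces $\underline{\pi}_0(S^{0,0})_{-q}(\text{Spec}(k))\cong K^{\mathrm{MW}}_{-q}(k)$ (using Morel's computation $\underline{\pi}_0(S^{0,0})_*\cong\underline{K}^{\mathrm{MW}}_*$), which is nonzero in general.

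The whole mathematical content sits in Morel's connectivity theorem, and this is where I expect the difficulty to lie; the steps above are only bookkeeping with invertibility of the spheres, the monoidal structure of $\text{SH}(k)$, and the Nisnevich topology on a point. The crux of Morel's theorem is the \emph{unstable} $\mathbb{A}^1$-connectivity statement — that $\mathbb{A}^1$-localization of a pointed simplicial Nisnevich sheaf over a field does not lower its Nisnevich-local connectivity — which is proved by a delicate analysis of the singular ($\mathrm{Sing}^{\mathbb{A}^1}$) resolution together with an unramifiedness/Gersten-type input particular to smooth schemes over a field. Since only the stated special case of $S^{0,0}$ is needed here, one may alternatively import the connectivity theorem as a black box, exactly as in \cite{10.1093/qmath/hap005}.
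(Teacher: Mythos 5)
Your proposal is correct and takes essentially the same route as the paper: Theorem~\ref{Conn} is stated there without proof, simply citing R\"ondigs \cite{10.1093/qmath/hap005}, which in turn rests on the stable $\mathbb{A}^1$-connectivity theorem (attributed by the paper to Voevodsky, though it is Morel's theorem in the form you need). Your bookkeeping --- smashing with the inverse sphere to place the group as the $\text{Spec}(k)$-section of the degree-$(q-p)$ homotopy presheaf of the sphere spectrum, invoking connectivity to kill the negative-degree sheaves, and observing that $\text{Spec}(k)$ has only the trivial Nisnevich covering sieve so sheafification changes nothing there --- is exactly what unpacking that citation gives, and is correct.
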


Next, we describe the Spanier-Whitehead dual of a smooth projective scheme $x:X\to S$ with tangent bundle 
$\mathcal{T}(x)\to X$. Let $\mathbb{I}_S\in\text{SH}(S)$ denote the unit for the smash product in the motivic stable
homotopy category, given by the $S^{2,1}$-suspension spectrum of the zero sphere $S_+$. The \emph{Spanier-Whitehead dual}
of an object $E\in\text{SH}(S)$ is defined as the internal Hom $\mathcal{D}(E):=\text{SH}(S)(E,\mathbb{I}_S)$. For example, the Spanier-Whitehead dual of
$S^{p,q}$ is $S^{-p,-q}$. The following result concerning Spanier-Whitehead duality is proven in \cite{HU2005609} in the case where $S=\text{Spec}(k)$. The general 
case is addressed in \cite{AST_2007__314__R1_0}.

\begin{theorem}\label{Dual}
    Let $S$ be a base scheme, and let $x:X\to S$ be a smooth projective morphism. There is an isomorphism
    \begin{equation*}
        \mathcal{D}(X_+)\sim x_{\sharp}(\text{Th}(-[\mathcal{T}(x)]))
    \end{equation*}
    in $\text{SH}(S)$, where $[\mathcal{T}(x)]\in K^0(X)$ is the class of the tangent bundle of $x:X\to S$.
\end{theorem}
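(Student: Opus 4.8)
The plan is to deduce the duality isomorphism formally from the six-functor formalism attached to the assignment $S\mapsto\text{SH}(S)$ (Ayoub, Cisinski--D\'{e}glise) together with the Morel--Voevodsky homotopy purity theorem; over $S=\text{Spec}(k)$ one can instead run the classical Atiyah-duality argument by hand via a closed embedding into projective space, which I will outline as the ``hard part'' at the end. Throughout, for $x:X\to S$ I write $x^*,x_*,x_\sharp,x_!,x^!$ for the usual functors, with $x_\sharp\dashv x^*\dashv x_*$ (the first adjunction because $x$ is smooth) and $x_!\dashv x^!$, and I set $\mathbb{I}_X:=x^*\mathbb{I}_S$, the unit of $\text{SH}(X)$.

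The first step is to rewrite $X_+$ and its dual functorially. Since $x$ is smooth, $\Sigma^{\infty,\infty}X_+\simeq x_\sharp\mathbb{I}_X$ in $\text{SH}(S)$, so $\mathcal{D}(X_+)=\underline{\mathrm{Hom}}(x_\sharp\mathbb{I}_X,\mathbb{I}_S)$. The smooth projection formula $x_\sharp(x^*C\wedge A)\simeq C\wedge x_\sharp A$ together with the $(x^*,x_*)$ adjunction yields the standard identity $\underline{\mathrm{Hom}}(x_\sharp A,B)\simeq x_*\,\underline{\mathrm{Hom}}(A,x^*B)$, and with $A=B=\mathbb{I}$ this gives $\mathcal{D}(X_+)\simeq x_*\mathbb{I}_X$. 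The second step uses properness: for proper $x$ the canonical transformation $x_!\to x_*$ is an equivalence, so $\mathcal{D}(X_+)\simeq x_!\mathbb{I}_X$. The third step invokes relative purity for the smooth morphism $x$ of relative dimension $g$, which supplies a natural equivalence $x^!(-)\simeq\text{Th}(\mathcal{T}(x))\wedge_X x^*(-)$ of functors $\text{SH}(S)\to\text{SH}(X)$; since $\text{Th}(\mathcal{T}(x))$ is $\otimes$-invertible, passing to left adjoints converts this into $x_!(-)\simeq x_\sharp\bigl(\text{Th}(-\mathcal{T}(x))\wedge -\bigr)$. Evaluating at $(-)=\mathbb{I}_X$ gives $\mathcal{D}(X_+)\simeq x_!\mathbb{I}_X\simeq x_\sharp\,\text{Th}(-[\mathcal{T}(x)])$, as claimed.

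Everything above is formal once one grants relative purity, so the real content — and the main obstacle — is exactly this purity input, i.e. Atiyah duality in families, which is the algebro-geometric substitute for a tubular neighborhood. Over a field it can be produced by hand: choose a closed immersion $i:X\hookrightarrow\mathbb{P}^N_k$ with normal bundle $\nu$, so $[\nu]=i^*[\mathcal{T}(\mathbb{P}^N_k)]-[\mathcal{T}(x)]$ in $K^0(X)$; apply homotopy purity to identify $\mathbb{P}^N_k/(\mathbb{P}^N_k\setminus X)\simeq\text{Th}(\nu)$; build the Pontryagin--Thom collapse $\Sigma^{\infty,\infty}(\mathbb{P}^N_k)_+\to\text{Th}(\nu)$ and the diagonal, and verify the triangle identities exhibiting a suitable (de)suspension of $\text{Th}(\nu)$ as the Spanier--Whitehead dual of $X_+$ — using the known invertibility and self-duality properties of $\mathbb{P}^N_k$, or a further embedding into an affine chart so that the ambient Thom space becomes $S^{2M,M}$; finally rewrite $\text{Th}(\nu)$ in terms of $\mathcal{T}(x)$ via the Euler sequence $0\to\mathcal{O}\to\mathcal{O}(1)^{N+1}\to\mathcal{T}(\mathbb{P}^N_k)\to 0$. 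The verification of the triangle identities is the genuinely technical step, carried out in \cite{HU2005609} for $S=\text{Spec}(k)$ and in \cite{AST_2007__314__R1_0} for a general base; I would cite those rather than reproduce it.
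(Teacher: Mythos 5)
The paper itself offers no proof of Theorem \ref{Dual}: it is stated as a citation, to \cite{HU2005609} for $S=\text{Spec}(k)$ and to \cite{AST_2007__314__R1_0} for a general base. Your argument is the standard six-functor derivation found in those sources (especially Ayoub's), and it is correct: the chain $\mathcal{D}(X_+)\simeq x_*\underline{\mathrm{Hom}}(\mathbb{I}_X,\mathbb{I}_X)\simeq x_*\mathbb{I}_X\simeq x_!\mathbb{I}_X\simeq x_\sharp\,\text{Th}(-[\mathcal{T}(x)])$ uses, in order, the projection-formula identity $\underline{\mathrm{Hom}}(x_\sharp A,B)\simeq x_*\underline{\mathrm{Hom}}(A,x^*B)$, the equivalence $x_!\simeq x_*$ for proper $x$, and relative purity $x^!\simeq\text{Th}(\mathcal{T}(x))\wedge x^*$ together with passage to left adjoints using $\otimes$-invertibility of $\text{Th}(\mathcal{T}(x))$. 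You correctly isolate relative purity (equivalently, the Pontryagin--Thom/Atiyah construction over a field, as in Hu) as the genuinely nonformal input, and, like the paper, you defer it to the references; the sketch you give of the closed-embedding-into-$\mathbb{P}^N$ construction is the right outline of what those references do. In short, this is not a different route from the paper's but a correct and useful unpacking of the argument the paper leaves entirely to the citations.
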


If $A$ is a retract of $B$ in a stable homotopy category such as $\text{SH}(S)$, then $A$ is, in fact a summand of $B$. Given $X\in\text{Sm}_S$
with the structural map $x:X\to S$, observe that $S^{0,0}=S^+$ is a retract of $X_+$ in $\text{SH}(S)$ if there exists a morphism $x_0:S^{0,0}=S_+\to X_+$
in $\text{SH}(S)$ such that $x_+\circ x_0$ is the identity element in $\pi_{0,0}(S^{0,0})=\text{Hom}_{\text{SH}(S)}(S^{0,0}, S^{0,0})$. A morphism
$x_0$ satisfying this condition is called a \emph{rational point up to stable homotopy}. Every rational point is also a rational point up to stable homotopy.

\begin{theorem}\label{Shift1}
    Let $x:X\to S$ be a smooth projective connected scheme over $S=\text{Spec}(k)$ of dimension $d$ with a rational point up to stable
    homotopy $x_0:S^{0,0}\to X_+$. Suppose that $x_{\sharp}(\text{Th}(-[\mathcal{T}(x)]))$ is isomorphic to $S^{-2d,-d}\wedge X_+\in\text{SH}(S)$.
    Then $X_+$ splits as
    \begin{equation*}
        X_+\sim S^{0,0}\vee F\vee S^{2d,d}
    \end{equation*}
    for some $F\in\text{SH}(S)$.
\end{theorem}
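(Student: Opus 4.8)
The plan is to follow the strategy behind Theorem \ref{theta} (see \cite{10.1093/qmath/hap005}) in three steps: split off the bottom cell $S^{0,0}$ using the rational point, split off the top cell $S^{2d,d}$ by Spanier--Whitehead duality, and then invoke the connectivity Theorem \ref{Conn} to see that these two splittings are compatible, so that both cells can be removed simultaneously. For the first step, since $x_0\colon S^{0,0}\to X_+$ is a rational point up to stable homotopy we have $x_+\circ x_0=\mathrm{id}_{S^{0,0}}$, so $S^{0,0}$ is a retract of $X_+$ and hence (retracts are summands in $\text{SH}(S)$, as recalled above) a summand. I will take the splitting $X_+\sim S^{0,0}\vee C$ determined by the idempotent $x_0 x_+$, with $S^{0,0}$ its image and $C\in\text{SH}(S)$ its complement, and fix the associated inclusion $i_C\colon C\to X_+$ and projection $p_C\colon X_+\to C$; these satisfy $p_C i_C=\mathrm{id}_C$ and $i_C p_C=\mathrm{id}_{X_+}-x_0 x_+$.

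For the second step, recall that $X_+$ is strongly dualizable because $x$ is smooth and projective. By Theorem \ref{Dual} we have $\mathcal{D}(X_+)\sim x_{\sharp}(\text{Th}(-[\mathcal{T}(x)]))$, which by hypothesis is $S^{-2d,-d}\wedge X_+$. Smashing the splitting of the first step with the invertible object $S^{-2d,-d}$ gives
\begin{equation*}
    \mathcal{D}(X_+)\sim S^{-2d,-d}\wedge(S^{0,0}\vee C)\sim S^{-2d,-d}\vee(S^{-2d,-d}\wedge C).
\end{equation*}
Applying $\mathcal{D}$ to this decomposition and using that $\mathcal{D}$ is an exact contravariant functor with $\mathcal{D}\mathcal{D}(X_+)\sim X_+$ and $\mathcal{D}(S^{-2d,-d})\sim S^{2d,d}$, I obtain a splitting $X_+\sim S^{2d,d}\vee D$ for some $D\in\text{SH}(S)$. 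Let $s\colon S^{2d,d}\to X_+$ and $q\colon X_+\to S^{2d,d}$ be the associated inclusion and projection, so $qs=\mathrm{id}_{S^{2d,d}}$.

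For the third step, it remains to show that $S^{2d,d}$ is already a summand of $C$. Consider $\alpha:=p_C\circ s\colon S^{2d,d}\to C$ and $\beta:=q\circ i_C\colon C\to S^{2d,d}$. Using $i_C p_C=\mathrm{id}_{X_+}-x_0 x_+$ one computes
\begin{equation*}
    \beta\alpha=q(i_C p_C)s=qs-(q x_0)(x_+ s)=\mathrm{id}_{S^{2d,d}}-(q x_0)(x_+ s).
\end{equation*}
Now $q x_0\in\text{Hom}_{\text{SH}(S)}(S^{0,0},S^{2d,d})$, and since $2d>d$ (here $d\ge 1$) Theorem \ref{Conn} forces $q x_0=0$. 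Hence $\beta\alpha=\mathrm{id}_{S^{2d,d}}$, so $S^{2d,d}$ is a retract, and therefore a summand, of $C$, say $C\sim F\vee S^{2d,d}$. Combining with $X_+\sim S^{0,0}\vee C$ gives $X_+\sim S^{0,0}\vee F\vee S^{2d,d}$, the desired splitting.

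The point I expect to require the most care is the third step, and in particular the orientation of the connectivity estimate: only the linking map $q x_0\colon S^{0,0}\to S^{2d,d}$ lies in a bidegree where Theorem \ref{Conn} forces vanishing, whereas the opposite composite $x_+ s\colon S^{2d,d}\to S^{0,0}$ need not vanish. This is exactly why one splits $S^{2d,d}$ off of $C$ rather than attempting a symmetric argument, and it also explains why the hypothesis on $x_{\sharp}(\text{Th}(-[\mathcal{T}(x)]))$ must be used to produce the top cell on the correct side. The remaining ingredients --- distributivity of $\wedge$ over $\vee$, dualizability of direct summands, and the splitting of idempotents in $\text{SH}(S)$ --- are standard and are recorded in the excerpt.
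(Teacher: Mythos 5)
Your proof is correct and follows essentially the same strategy as the paper's: split off $S^{0,0}$ via the rational point, produce a retract $S^{2d,d}\to X_+\to S^{2d,d}$ via Spanier--Whitehead duality and the hypothesis, and then use Theorem \ref{Conn} to kill the linking term so that the top cell sits inside the complement of the bottom cell. The only cosmetic difference is in the middle and final steps: the paper obtains its retract pair $(\psi,\varphi)$ directly by dualizing $x_0$ and $x_+$ and twisting, and finishes by showing $c^*$ and $d^*$ are mutually inverse on $[-,S^{2d,d}]$, whereas you pass through smashing the first splitting with $S^{-2d,-d}$ and dualizing to produce $(s,q)$, and finish by the direct idempotent computation $\beta\alpha=\mathrm{id}-(qx_0)(x_+s)$ with $qx_0=0$; these are equivalent, and your explicit identification of the vanishing composite $qx_0\in[S^{0,0},S^{2d,d}]$ is if anything slightly more transparent than the paper's $\mathrm{Hom}$-group bookkeeping.
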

\begin{proof}
    The rational point up to stable homotopy $x_0$ and the structure map $x$ imply that $S^{0,0}$ is a retract of $X_+$. As mentioned above, there is a splitting 
    $X_+\stackrel{\sim}{\longrightarrow}(X,x_0)\vee S^{0,0}$ in $\text{SH}(S)$, given by a morphism $c:X_+\to (X,x_0)$ and the structural map $x_+$. Let $d:(X,x_0)\to(X,x_0)\vee S_+\stackrel{\sim}{\longrightarrow}X_+$
    denote the canonical map. To produce the splitting, it suffices to show that $S^{2d,d}$ is a retract of $(X,x_0)$.\\
    Applying the Spanier-Whitehead duality functor to the morphisms $S^{0,0}\stackrel{x_0}{\longrightarrow}X_+\stackrel{x_+}{\longrightarrow}S^{0,0}$ produces morphisms
    \begin{equation*}
        S^{0,0}\sim\mathcal{D}(S^{0,0})\stackrel{\mathcal{D}(x_0)}{\gets}\mathcal{D}(X_+)\stackrel{\mathcal{D}(x_+)}{\gets}\mathcal{D}(S^{0,0})\sim S^{0,0}
    \end{equation*}
    Then by Theorem \ref{Dual} and our hypothesis, we have the isomorphism $\mathcal{D}(X_+)\sim S^{-2d,-d}\wedge X_+$, and thus, after tensoring with $S^{2d,d}$ there is a diagram
    \begin{equation*}
        S^{2d,d}\stackrel{\varphi}{\longleftarrow}X_+\stackrel{\psi}{\longleftarrow}S^{2d,d}
    \end{equation*}
    which shows that $S^{2d,d}$ is a retract of $X_+$. To obtain the desired result, we need to conclude that the composition
    \begin{equation*}
        S^{2d,d}\stackrel{\varphi}{\longleftarrow}X_+\stackrel{d}{\longleftarrow}(X,x_0)\stackrel{c}{\longleftarrow}X_+\stackrel{\psi}{\longleftarrow}S^{2d,d}
    \end{equation*}
    is the identity. This composition is the image of $\text{id}_{S^{2d,d}}$ under the sequence of maps
    \begin{equation*}
        [S^{2d,d}, S^{2d,d}]\stackrel{\varphi^*}{\longrightarrow}[X_+,S^{2d,d}]\stackrel{d^*}{\longrightarrow}[(X,x_0), S^{2d,d}]\stackrel{c^*}{\longrightarrow}[X_+,S^{2d,d}]\stackrel{\psi^*}{\longrightarrow}[S^{2d,d}, S^{2d,d}],
    \end{equation*}
    where $\text{Hom}_{\text{SH}(S)}(-,-)=[-,-]$. The splitting $X_+\sim S^{0,0}\vee (X,x_0)$ implies that there are commutative diagrams\\
    \begin{equation*}
     \begin{tikzcd}
        \lbrack X_+,S^{2d,d} \rbrack\arrow[r,"\cong"] \arrow[d,"d^*"] & \lbrack S^{0,0}\vee (X,x_0), S^{2d,d}\rbrack \arrow[d,"\cong"]\\
        \lbrack (X,x_0), S^{2d,d} \rbrack & \lbrack S^{0,0},S^{2d,d} \rbrack \oplus \lbrack (X,x_0), S^{2d,d}\rbrack \arrow[l,"\text{pr}"]
     \end{tikzcd}
    \end{equation*}
    The group $[S^{0,0}, S^{2d,d}]$ is trivial by Theorem \ref{Conn}, which implies that $d^*$ is an isomorphism with inverse $c^*$.
\end{proof}

The argument applies to abelian varieties beacuse the tangent bundle of an abelian variety is trivial, hence so is the associated Thom spectrum. So this argument 
in general is true for smooth projective connected schemes whose Spanier-Whitehead dual is given simply by a shift, and in particular with trivial tangent bundle and with tangent bundle 
whose associated Thom spectrum is trivial.

\begin{corollary}\label{Shift2}
    Let $x:X\to S$ be a smooth projective connected scheme over $S=\text{Spec}(k)$ of dimension $d$ with tangent bundle whose Thom spectrum is trivial, then there is an isomorphism
    \begin{equation*}
        \mathcal{D}(X_+)\sim S^{-2d,-d}\wedge X_+
    \end{equation*}
    in $\text{SH}(S)$. And then $X_+$ has the corresponding splitting as in Theorem \ref{Shift1}.
\end{corollary}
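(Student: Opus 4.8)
The plan is to feed the hypothesis directly into the duality formula of Theorem~\ref{Dual}. First I would make the hypothesis explicit: since $x\colon X\to S$ is smooth of relative dimension $d$, the Thom spectrum of the trivial rank-$d$ bundle over $X$ is $\text{Th}(\mathbb{A}^d_X)\simeq S^{2d,d}\wedge X_+$ (as in the discussion preceding Theorem~\ref{Conn}), so the assumption that ``the Thom spectrum of $\mathcal{T}(x)$ is trivial'' means $\text{Th}(\mathcal{T}(x))\simeq S^{2d,d}\wedge X_+$ in $\text{SH}(X)$; for an abelian variety this holds on the nose because $\mathcal{T}(x)\cong\mathcal{O}_X^{\oplus d}$. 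Since the assignment $\alpha\mapsto\text{Th}(\alpha)$ extends to a homomorphism from $K^0(X)$ into the Picard group of $\text{SH}(X)$ (additive under Whitney sum, sending $0$ to the $\otimes$-unit), the virtual Thom spectrum of $-[\mathcal{T}(x)]$ is the $\otimes$-inverse of $\text{Th}(\mathcal{T}(x))$, whence $\text{Th}(-[\mathcal{T}(x)])\simeq S^{-2d,-d}\wedge X_+$ in $\text{SH}(X)$.

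Next I would push this forward. As $x$ is smooth, the functor $x_\sharp\colon\text{SH}(X)\to\text{SH}(S)$ is defined, satisfies $x_\sharp(\mathbb{I}_X)\cong\Sigma^{\infty,\infty}X_+=X_+$, and obeys the smooth projection formula $x_\sharp(A\wedge x^*B)\cong x_\sharp(A)\wedge B$. Writing the bigraded sphere over $X$ as the $x^*$-pullback of the one over $S$, this gives
\[
x_\sharp\big(S^{-2d,-d}\wedge X_+\big)\cong x_\sharp\big(x^*S^{-2d,-d}\wedge\mathbb{I}_X\big)\cong S^{-2d,-d}\wedge x_\sharp(\mathbb{I}_X)\cong S^{-2d,-d}\wedge X_+.
\]
Combining this with Theorem~\ref{Dual}, which reads $\mathcal{D}(X_+)\sim x_\sharp(\text{Th}(-[\mathcal{T}(x)]))$, yields the desired isomorphism $\mathcal{D}(X_+)\sim S^{-2d,-d}\wedge X_+$ in $\text{SH}(S)$.

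Finally, the splitting is immediate: the isomorphism just obtained is exactly the standing hypothesis of Theorem~\ref{Shift1}. So, granting in addition a rational point up to stable homotopy $x_0\colon S^{0,0}\to X_+$ — which any $k$-point furnishes, e.g.\ the identity section of an abelian variety — Theorem~\ref{Shift1} applies word for word and produces $X_+\sim S^{0,0}\vee F\vee S^{2d,d}$ for some $F\in\text{SH}(S)$.

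The main obstacle, such as it is, is bookkeeping rather than substance: keeping straight that $\text{Th}(\mathcal{T}(x))$ and $\text{Th}(-[\mathcal{T}(x)])$ live in $\text{SH}(X)$ while $\mathcal{D}(X_+)$ and the final splitting live in $\text{SH}(S)$; checking that triviality of $\text{Th}(\mathcal{T}(x))$ really does force triviality of the virtual Thom spectrum of $-[\mathcal{T}(x)]$ (which only uses additivity of $\text{Th}$ on $K^0$); and invoking the projection formula together with the identity $x_\sharp\mathbb{I}_X\cong X_+$, both of which are available precisely because $x$ is a smooth morphism. None of these steps is deep, but stating them cleanly is where the care goes.
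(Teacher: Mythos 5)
Your argument is correct and follows essentially the same route as the paper: identify $\text{Th}(-[\mathcal{T}(x)])$ with the $\text{SH}(X)$-sphere $x^*S^{-2d,-d}$ via triviality of $\text{Th}(\mathcal{T}(x))$, push forward with $x_\sharp$ using $x_\sharp\mathbb{I}_X\simeq X_+$ and the projection formula, and then invoke Theorem~\ref{Shift1}. One notational slip worth fixing: writing ``$\text{Th}(\mathcal{T}(x))\simeq S^{2d,d}\wedge X_+$ in $\text{SH}(X)$'' is not literally right (that object lives in $\text{SH}(S)$); what you mean, and what your subsequent computation correctly uses, is $\text{Th}(\mathcal{T}(x))\simeq x^*S^{2d,d}\wedge\mathbb{I}_X$ in $\text{SH}(X)$, which becomes $S^{2d,d}\wedge X_+$ only after applying $x_\sharp$.
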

\begin{proof}
    Theorem \ref{Dual} provides an isomorphism $\mathcal{D}(X_+)\sim x_{\sharp}(\text{Th}(-[\mathcal{T}(x)]))$ in $\text{SH}(S)$. By our assumption, $\text{Th}([\mathcal{T}(x)])\sim X^{2d,d}\in\text{SH}(X)$. Therefore\\
    \begin{equation*}
        x_{\sharp}(\text{Th}(-\mathcal{T}(x)))\sim x_{\sharp}(X^{-2d,-d})\sim x_{\sharp}x^*(S^{-2d,-d})\sim S^{-2d,-d}\wedge X_+
    \end{equation*}
    by the projection formula. Moreover, if the tangent bundle is trivial, then we have $x_{\sharp}(\text{Th}(-\mathcal{T}(x))) = x_{\sharp}(X^{-2d,-d})$
    and the same conclusion follows.
\end{proof}

\begin{remark}\label{projective}
    We don't always get this luck to have tangent bundle or its associated Thom spectrum to be trivial. For example, the tangent bundle of a smooth projective curve is non-trivial precisely if the genus of the
    curve is different from one. But we still get the splitting if the curve admits a theta characteristic, which is exactly the result of R\"ondigs. The other simple example is the projective plane $\mathbb{P}^2$,
    a smooth projective scheme whose a tangent bundle has a non-trivial Thom spectrum. In fact, if the Thom spectrum of $\mathcal{T}(\mathbb{P}^2)$ was trivial, Corollary \ref{Shift2} would imply that $\mathbb{P}^2_+\sim S^{0,0}\vee S^{2,1}\vee S^{4,2}$
    in the motivic stable homotopy category. However, the algebraic Hopf map $S^{3,2}\sim \mathbb{A}^2\setminus\{ 0\}\to\mathbb{P}^1\sim S^{2,1}$ is stably nontrivial.
\end{remark}

If one is only interested in the $E$-(co)homology of a smooth projective $S$-scheme $X$ whose tangent bundle admits an $E$-orientation, then $E\wedge \mathcal{D}(X)_+\sim S^{-2d,-d}\wedge E\wedge X_+$ by the Thom isomorphism.
In particular, there is no restriction on $X$ if $E$ is an orientable motivic spectrum. Thus the $E$-(co)homology of a smooth projective scheme always splits off a top-dimensional cell, which is a special case of Poincar\'e duality.

So we can conclude this section by the following theorem:

\begin{theorem}\label{integral}
    Let $X$ be an abelian variety of dimension $g$ over $k$ with a rational point up to stable
    homotopy $x_0:S^{0,0}\to X_+$.
    Then $X_+$ splits as
    \begin{equation*}
        X_+\sim S^{0,0}\vee F\vee S^{2g,g}
    \end{equation*}
    for some $F\in\text{SH}(k)$.
\end{theorem}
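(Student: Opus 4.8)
The plan is to reduce the statement immediately to Corollary \ref{Shift2}. The only thing that genuinely needs checking is that an abelian variety $X$ of dimension $g$ over $k$ is a smooth projective connected $k$-scheme whose tangent bundle is trivial, so that the hypotheses of that corollary — and hence of Theorem \ref{Shift1} — are met with $d=g$ and $S=\text{Spec}(k)$.

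First I would recall the standard facts. By definition an abelian variety is a smooth, projective, geometrically connected group scheme over $k$, so the structure map $x:X\to S=\text{Spec}(k)$ is a smooth projective connected morphism of the kind required in Theorem \ref{Shift1}. Next, the tangent bundle of any group scheme is trivial: choosing a basis $v_1,\dots,v_g$ of the tangent space $T_e X$ at the identity section and spreading it out by left translations $\ell_a:X\to X$ produces a global frame, i.e.\ an isomorphism $\mathcal{T}(x)\cong \mathcal{O}_X^{\oplus g}$. Over a non-algebraically-closed field one uses the left-invariant vector fields, which are defined over $k$ because the identity section and the multiplication map are; so the trivialization is defined over $k$ itself.

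With triviality of $\mathcal{T}(x)$ in hand, Corollary \ref{Shift2} — in particular the last sentence of its proof, which treats the case of a trivial tangent bundle — gives the isomorphism $\mathcal{D}(X_+)\sim S^{-2g,-g}\wedge X_+$ in $\text{SH}(k)$. This is exactly the hypothesis $x_{\sharp}(\text{Th}(-[\mathcal{T}(x)]))\cong S^{-2g,-g}\wedge X_+$ needed to invoke Theorem \ref{Shift1}. Combining it with the given rational point up to stable homotopy $x_0:S^{0,0}\to X_+$, Theorem \ref{Shift1} with $d=g$ yields a splitting $X_+\sim S^{0,0}\vee F\vee S^{2g,g}$ for some $F\in\text{SH}(k)$, which is the assertion. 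There is essentially no obstacle here beyond this bookkeeping; the substantive work was already done in Theorem \ref{Shift1} and Corollary \ref{Shift2}. The one point to keep in mind is that we are using triviality in the strong form — the tangent \emph{bundle} itself is trivial, not merely its Thom spectrum — which is precisely what holds for abelian varieties. The mystery summand $F$ is of course the interesting object, and pinning it down is the task of the rest of the paper.
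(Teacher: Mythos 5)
Your proposal is correct and follows exactly the same route as the paper, which itself justifies the theorem by the remark immediately preceding it (``The argument applies to abelian varieties because the tangent bundle of an abelian variety is trivial, hence so is the associated Thom spectrum'') and then invokes Corollary~\ref{Shift2} and Theorem~\ref{Shift1}. Your extra care in noting that the left-invariant trivialization of the tangent bundle is defined over $k$ is a sensible elaboration of a point the paper leaves implicit.
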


We will see later that passing to some localizing coefficients will make the splitting more complicated and we will get more components in the splitting.

\section{Motivic stable homotopy category with coefficients}
As we have seen in Section \ref{section3} that we still only split off one top cell of an abelian variety, we may consider to modify the condition to improve this splitting. One direction is to
consider the splitting in the category $\text{SH}(k)_{\Lambda}$ as we constructed in Section \ref{prem}. 

To begin with, we work over a perfect base field $k$ with $\text{char}(k)\neq 2$ and we fix the base $S=\text{Spec}(k)$. Recall $\Z\subset \Lambda\subset\Q$ denotes the chosen localizing
coefficients.

Hopkins-Morel \cite{Mor10} have defined the Milnor-Witt sheaf $K_*^{MW}$. For a field $k$, $K_*^{MW}(k)$ is the graded ring defined by generators and relations:\\
$\bullet$ Generators: for $u\in k^{\times}$ one has the generators $[u]$ in degree $+1$. There is an additional generator $\eta$ in degree $-1$.\\
$\bullet$ Relations:\\
(1)$\eta[u]=[u]\eta$ for all $u\in F^{\times}$;\\
(2)(Twisted additivity) $[uv]=[u]+[v]+\eta[u][v]$ for $u,v\in F^{\times}$;\\
(3)(Steinberg relation) $[u][1-u]=0$ for $u\in F\setminus\{0,1\}$;\\
(4)(Hyperbolic relation) Let $h=2+\eta[-1]$. Then $\eta\cdot h=0$.\\

Let $\text{GW}(k)$ denote the Grothendieck-Witt ring of non-degenerate quadratic forms over $k$; this is the group completion of the monoid (under orthogonal direct sum) of 
non-degenerate quadratic forms over $k$. The hyperbolic form is the rank 2 form $H(x,y)=x^2-y^2$.For $u\in k^{\times}$, we let $q_u$ be the rank one form given by $q_u(x)=ux^2$. We also 
denote $\langle u\rangle=1+\eta[u]\in K^{MW}_0(k)$. And in fact this extends to an isomorphism $\text{GW}(k)\cong K^{MW}_0(k)$

By a theorem of Morel \cite{Mor10}, there is a canonical identification
\begin{theorem}[Morel]
    $\text{Hom}_{\text{SH}(k)}(\mathbb{I}_k,\mathbb{G}_{m,+}^{\wedge *})\cong K^{MW}_*(k)$.
\end{theorem}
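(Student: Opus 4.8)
The plan is to recover Morel's computation of the zeroth stable homotopy sheaf of the motivic sphere spectrum, carried out in \cite{Morel2012A1AlgebraicTO}. First I would reduce to a sheaf-theoretic statement. Since $\mathbb{G}_{m,+}\simeq S^{0,0}\vee\mathbb{G}_m$, the graded group in the statement is assembled out of the groups $\text{Hom}_{\text{SH}(k)}(\mathbb{I}_k,\mathbb{G}_m^{\wedge n})$ for $n\in\Z$ (using the invertibility of $\mathbb{G}_m$ to make sense of negative $n$), so it suffices to identify each of these with $K^{MW}_n(k)$ compatibly with products. By Morel's connectivity theorem, which refines Theorem \ref{Conn}, the sphere $\mathbb{S}=\mathbb{I}_k$ is $(-1)$-connected, so each such group is the group of sections over $\text{Spec}(k)$ of the strictly $\mathbb{A}^1$-invariant homotopy sheaf $\underline{\pi}_0(\mathbb{S})_{-n}$, and the task becomes the computation of this sheaf of graded rings on finitely generated field extensions of $k$.

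Second, I would construct the comparison ring homomorphism $K^{MW}_*(k)\to\bigoplus_n\text{Hom}_{\text{SH}(k)}(\mathbb{I}_k,\mathbb{G}_m^{\wedge n})$ on generators: a unit $u\in k^{\times}$ is sent to the reduced class of the point $u\in\mathbb{G}_m(k)$, an element of $\text{Hom}_{\text{SH}(k)}(\mathbb{I}_k,\mathbb{G}_m)$, and $\eta$ is sent to the stable desuspension of the algebraic Hopf map $\mathbb{A}^2\setminus\{0\}\to\mathbb{P}^1$, which after cancelling a common $S^{1,0}$-suspension is a morphism $\mathbb{G}_m^{\wedge 2}\to\mathbb{G}_m$, i.e. an element of weight $-1$. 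One then verifies the defining relations of $K^{MW}_*$. The Steinberg relation $[u][1-u]=0$ is the subtle one: it holds because the map $\mathbb{A}^1\setminus\{0,1\}\to\mathbb{G}_m\wedge\mathbb{G}_m$, $t\mapsto t\wedge(1-t)$, is $\mathbb{A}^1$-nullhomotopic. Twisted additivity $[uv]=[u]+[v]+\eta[u][v]$ follows from analysing the multiplication $\mathbb{G}_m\times\mathbb{G}_m\to\mathbb{G}_m$ on the cofibre of the two axis inclusions, the smash summand $\mathbb{G}_m\wedge\mathbb{G}_m$ producing exactly the $\eta$-correction term; and the hyperbolic relation $\eta h=0$, $h=2+\eta[-1]$, follows from the motivic Hopf construction together with the symmetry of the Hopf element.

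Third, I would prove the comparison map is an isomorphism. For injectivity: $\underline{\pi}_0(\mathbb{S})_{-n}$ maps to $K^M_n(k)$ via the unit $\mathbb{S}\to H\Z$ (using $\bigoplus_n H^{n,n}(k,\Z)\cong K^M_*(k)$, Nesterenko--Suslin--Totaro) and to the Witt ring $W(k)$ via $\eta$-inversion, and under these two maps the composite out of $K^{MW}_n$ recovers the projection $K^{MW}_n\to K^{MW}_n/\eta=K^M_n$ and the map $K^{MW}_n\to K^{MW}_n[\eta^{-1}]$ with image $I^n\subseteq W(k)$; since $K^{MW}_n\cong K^M_n\times_{K^M_n/2}I^n$ is a fibre product (Morel's Cartesian square, whose lower identification $K^M_n/2\cong I^n/I^{n+1}$ is the Milnor conjecture), an element killed by the comparison map is killed in both factors and hence is zero. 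Surjectivity is the heart of the matter: one must show every section of $\underline{\pi}_0(\mathbb{S})_{-n}$ over a field is a sum of symbols $\langle a\rangle[u_1]\cdots[u_n]$, together with powers of $\eta$ in negative weight. This is extracted from Morel's structure theory of strictly $\mathbb{A}^1$-invariant sheaves and their Rost--Schmid complexes: for $n\geq2$ he computes the first nonvanishing $\mathbb{A}^1$-homotopy sheaf $\underline{\pi}^{\mathbb{A}^1}_{n-1}(\mathbb{A}^n\setminus\{0\})$ directly and identifies it with the free strictly $\mathbb{A}^1$-invariant sheaf on $\mathbb{G}_m^{\wedge n}$ modulo the Steinberg relation, which a presentation argument shows to be $\underline{K}^{MW}_n$; stabilising in $n$ then gives the claim.

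The step I expect to be the main obstacle is this last one — the generation-by-symbols statement, equivalently the explicit computation of the first nonvanishing homotopy sheaf of the motivic spheres $\mathbb{A}^n\setminus\{0\}$. It is genuinely non-formal and depends on the full apparatus of strictly $\mathbb{A}^1$-invariant sheaves, their contractions and Gersten resolutions, and the fibre-sequence arguments of \cite{Morel2012A1AlgebraicTO}. Granting that input, constructing the comparison map, checking the Milnor--Witt relations, and deducing injectivity from the mod-$\eta$ and $\eta$-inverted pieces are comparatively routine.
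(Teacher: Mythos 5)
The paper does not prove this statement: it is quoted verbatim as a theorem of Morel with a pointer to \cite{Morel2012A1AlgebraicTO}, and nothing in the paper depends on the internal structure of its proof. Your proposal is therefore necessarily a comparison against Morel's own argument, and on those terms it is a sound précis: the reduction via the stable $\mathbb{A}^1$-connectivity theorem to the $\pi_0$-sheaf, the comparison map sending units to classes of $k$-points of $\mathbb{G}_m$ and $\eta$ to the (desuspended) algebraic Hopf map, the verification of the Steinberg/twisted-additivity/hyperbolic relations, injectivity via the Cartesian square $K^{MW}_n \cong K^M_n \times_{K^M_n/2} I^n$, and surjectivity via Morel's computation of the first nonvanishing $\mathbb{A}^1$-homotopy sheaf of $\mathbb{A}^n\setminus\{0\}$ using the Rost--Schmid apparatus for strictly $\mathbb{A}^1$-invariant sheaves — these are indeed the essential steps in Morel's book, and you correctly flag the last one as the genuinely non-formal input. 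You also sensibly resolved the paper's imprecise $\mathbb{G}_{m,+}$ notation via the splitting $\mathbb{G}_{m,+}\simeq S^{0,0}\vee\mathbb{G}_m$. Since the paper treats this as black-boxed background, no closer comparison against the text is possible.
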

In particular, we have $\text{End}_{\text{SH}(k)}(\mathbb{I}_k)\cong K^{MW}_0(k)\cong\text{GW}(k)$.

For $a\in k^{\times}$, there is a corresponding map $[a]:*\to\mathbb{G}_m$ inducing a stable map $[a]:\mathbb{I}_k\to\mathbb{G}_{m,+}\in\text{SH}(k)$.

The generator $\eta$ has a simple geometric description: it is the stablization of $\pi:\mathbb{A}^2\setminus \{ 0\} \to\mathbb{P}^1$ in the motivic stable homotopy category, which sends a point $(x,y)$ to a line $[x:y]$ through it.
This map $\pi$ is called the algebraic Hopf map, as the map on $\C$-points is homotopy equivalent to the classical Hopf map $\alpha:S^3\to S^2$. On the $\R$ points, $({\mathbb{A}}^2 \setminus \{ 0 \})(\R)=\R \setminus\{ 0\}\sim S^1$, $\mathbb{P}^1(\R)=\mathbb{RP}^1\sim S^1$ and $({\mathbb{A}}^2 \setminus \{ 0 \})(\R)\to\mathbb{P}^1(\R)$ is homotopy equivalent to the map $\times 2: S^1\to S^1$.

Now fix the base field to be $k=\R$. Sending $X\in\text{Sm}_k$ to the space of $\R$-points $X(\R)$ extends to a real realization functor
\begin{equation*}
    R_{\R}:\text{SH}(\R)\to\text{SH},
\end{equation*}
where $\text{SH}$ is the classical stable homotopy category and $R_{\R}(\Sigma^{\infty,\infty} X_+)=\Sigma^{\infty}X(\R)_+$. Here, $X\to X(\R)$ assigns a smooth scheme over $\R$ its set of real points 
with the strong topology.

Recall from Section \ref{section3} that the unit in $\text{SH}(k)$ is the motivic sphere spectrum 
\begin{equation*}
    \mathbb{I}_k:=\Sigma^{\infty,\infty}\text{Spec}(k)_+=(\text{Spec}(k)_+,T,T^{\wedge2},...).
\end{equation*}
where we denote $S^{2,1}$ by $T$.

Now assume $2\in\Lambda^\times$. The involution on $T\wedge T$ exchanging the two factors defines an involution $\tau:\mathbb{I}_k\to\mathbb{I}_k$. Since $\text{SH}(k)$ is idempotent complete, this yields a direct sum decomposition in $\text{SH}(k)_{\Lambda}$:
\begin{equation*}
    \mathbb{I}_k=\mathbb{I}^+_k\oplus\mathbb{I}^-_k
\end{equation*}
corresponding to the idempotents $\text{Id}_{\mathbb{I}_k}=(1/2)(\text{Id}_{\mathbb{I}_k}-\tau)+(1/2)(\text{Id}_{\mathbb{I}_k}+\tau)$. As $\mathbb{I}_k$ is the unit in 
$\text{SH}(k)$, this induces a decomposition of the category $\text{SH}(k)_{\Lambda}$ as 
\begin{equation*}
    \text{SH}(k)_{\Lambda} =\text{SH}(k)^{+}\times\text{SH}(k)^{-}
\end{equation*}
where $\tau$ acts as $\text{Id}$ on $\text{SH}(k)^{+}$ and as $\text{-Id}$ on $\text{SH}(k)^{-}$. Every object $E\in\text{SH}(k)_{\Lambda}$ then decomposes uniquely as $E=E^+\oplus E^-$.

One can express the involution $\tau$ in terms of $\eta$, in fact in $\text{GW}(k)$, $\tau$ corresponds to the element $<-1>$ and
thus
\begin{equation*}
    \tau=1+\eta[-1], (1/2)(1+\tau)=(1/2)h, (1/2)(1-\tau)=(-1/2)[-1]\eta.
\end{equation*}

On $\text{SH}(k)_{\Lambda}^+$, $\tau$ acts by $\text{id}$. By the relation $\eta h=0$, we have $\eta=0$, and therefore $\text{SH}(k)_{\Lambda}^+=\text{SH}(k)^{\eta=0}_{\Lambda}$.

Since $\tau$ acts by $-\text{id}$ on $\text{SH}(k)_{\Lambda}^-$, this implies that $\eta$ acts invertibly on $\text{SH}(k)_{\Lambda}^-$ and thus: 
\begin{equation*}
    \text{SH}(k)_{\Lambda}^-\simeq\text{SH}(k)_{\Lambda}[\eta^{-1}].
\end{equation*} 

For the plus part, Cisinski-D\'eglise \cite[Section 16.2]{Cisinski_2019} have described an equivalence of $\text{SH}(k)_{\Lambda}^+$ with big derived
category of motives over $k$ with $\Q$-coefficients when $\Lambda=\Q$:
\begin{equation*}
    \text{SH}(k)_{\Q}^+\simeq\text{DM}(k)_{\Q}
\end{equation*}
with the inclusion $\text{SH}_{\Q}^+(k)\subset\text{SH}(k)_{\Q}$ corresponding to the functor $U: \text{DM}(k)\to\text{SH}(k)$ we mentioned in Section \ref{prem}.

For the minus part, Bachmann \cite{Bachmann_20181} has shown that when $k=\R$, we have
$\text{SH}(\mathbb{R})[\rho^{-1}]\simeq\text{SH}$, where $\rho=[-1]\in K^{MW}_1(k)$, and the equivalence is induced by the real realization functor. From the discussion above,
we know that $\rho$ corresponds to the map of pointed motivic spaces $S^0\to\mathbb{G}_{m,+}$ given by $-1\in\R$.

After inverting 2, inverting $\eta$ is essentially the same as inverting $\rho$, since in $\text{SH}(k)^-$, $2=\eta [-1]$. So with the coefficient $\Lambda$,
we have $\text{SH}_{\Lambda}^-(\R)\simeq \text{SH}(\R)_{\Lambda}[\eta^{-1}]=\text{SH}(\mathbb{R})_{\Lambda}[\rho^{-1}]\simeq \text{SH}_{\Lambda}$. And when $\Lambda=\Q$, we have $\text{SH}^-(\R)\simeq \text{SH}_{\Q}\simeq D(\Q)$.
The last equivalence via the singular chain complex functor $X\mapsto C_*(X(\R))\otimes\Q$ is a classical result in rational stable homotopy theory.

\begin{remark}\cite{Bachmann_20182}\label{eta}
    An important observation is that $M(\eta)=0$ , so $M$ will annihilate $\text{SH}^{-}(k)$ and we can restrict $M$ to $\text{SH}_{\Lambda}^+(k)$.
\end{remark}

And to summarize this section, we have the following result:

\begin{proposition}\label{conclusion1}
    If $k=\R$ and $2\in\Lambda^\times$, we have 
    \begin{equation*}
        \text{SH}(\R)_{\Lambda}\simeq\text{SH}_{\Lambda}^+(\R)\times\text{SH}_{\Lambda}\to\text{DM}(\R)_\Lambda\times\text{SH}_{\Lambda}
    \end{equation*}
    given by applying $M$ to the plus part. And if $\Lambda=\Q$, we have the equivalence of categories:
    \begin{equation*}
        \text{SH}(\R)_{\Q}\simeq\text{DM}(\R)_{\Q}\times\text{SH}_{\Q}.
    \end{equation*}
\end{proposition}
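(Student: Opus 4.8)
The plan is to assemble the identifications recalled above into the two asserted decompositions, in the following order. First I would make the product decomposition explicit: since $2\in\Lambda^\times$, the element $\tau=\langle-1\rangle\in\mathrm{GW}(k)=\mathrm{End}_{\text{SH}(k)}(\mathbb{I}_k)$ satisfies $\tau^2=1$, so $e_{\pm}=(1/2)(1\pm\tau)$ are orthogonal idempotents summing to $\mathrm{id}_{\mathbb{I}_k}$ in $\text{SH}(k)_{\Lambda}$. Idempotent completeness of $\text{SH}(k)$ splits the unit as $\mathbb{I}_k=\mathbb{I}_k^+\oplus\mathbb{I}_k^-$ and hence, $\mathbb{I}_k$ being the monoidal unit, splits the whole tensor triangulated category: $\text{SH}(\R)_{\Lambda}\simeq\text{SH}^+(\R)\times\text{SH}^-(\R)$, with $\tau$ acting as $+\mathrm{id}$ on the first factor and $-\mathrm{id}$ on the second.

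Next I would identify the minus factor with $\text{SH}_{\Lambda}$. On $\text{SH}^-(\R)$ one has $\tau=-\mathrm{id}$, and combining this with $\tau=1+\eta\rho$ (where $\rho=[-1]$) gives $\eta\rho=-2$, which is invertible; hence both $\eta$ and $\rho$ act invertibly, and conversely inverting $\eta$ forces the projection onto the minus part, so $\text{SH}^-(\R)\simeq\text{SH}(\R)_{\Lambda}[\eta^{-1}]$. Since after inverting $2$ inverting $\eta$ is the same as inverting $\rho$, this reads $\text{SH}^-(\R)\simeq\text{SH}(\R)_{\Lambda}[\rho^{-1}]$. Bachmann's theorem \cite{Bachmann_20181} gives $\text{SH}(\R)[\rho^{-1}]\simeq\text{SH}$ via the real realization $R_{\R}$; tensoring with $\Lambda$ and using that $\Lambda$-localization and $\rho$-inversion commute yields $\text{SH}^-(\R)\simeq\text{SH}_{\Lambda}$, and for $\Lambda=\Q$ this is further $\simeq D(\Q)$ as recalled above.

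Then I would deduce both displayed statements. For general $\Lambda$: by Remark \ref{eta}, $M(\eta)=0$, so the functor $M:\text{SH}(\R)_{\Lambda}\to\text{DM}(\R)_{\Lambda}$ annihilates the $\eta$-invertible factor $\text{SH}^-(\R)$ and therefore factors through the projection to $\text{SH}^+(\R)$; composing the decomposition of the first paragraph with $M$ on the plus factor and the identification $\text{SH}^-(\R)\simeq\text{SH}_{\Lambda}$ of the second gives exactly $\text{SH}(\R)_{\Lambda}\simeq\text{SH}^+(\R)\times\text{SH}_{\Lambda}\to\text{DM}(\R)_{\Lambda}\times\text{SH}_{\Lambda}$. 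For $\Lambda=\Q$, the Cisinski--D\'eglise equivalence \cite{Cisinski_2019} $\text{SH}^+(k)\simeq\text{DM}(k)_{\Q}$ (with $\text{SH}^+(k)\subset\text{SH}(k)_{\Q}$ realized by $U$) upgrades the plus-factor functor to an equivalence, and combining it with $\text{SH}^-(\R)\simeq\text{SH}_{\Q}$ produces $\text{SH}(\R)_{\Q}\simeq\text{DM}(\R)_{\Q}\times\text{SH}_{\Q}$.

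The step I expect to require the most care is not any one citation but the compatibility of the three localizations at play: inverting the primes of $S$ to pass to $\Lambda$-coefficients, inverting $\rho$ (equivalently $\eta$), and the idempotent splitting along $\tau$. One must recognize each of these as a smashing/Bousfield-type localization so that they commute, and check that the real realization is symmetric monoidal and carries $\rho$ to the map it is supposed to, in order to legitimately identify "$\text{SH}^-(\R)$ with $\Lambda$-coefficients" with "$\text{SH}(\R)[\rho^{-1}]$ with $\Lambda$-coefficients" via $R_{\R}$. This is bookkeeping rather than a genuine difficulty, but it is where an honest write-up spends its words; everything else is a direct application of the results recalled in this section.
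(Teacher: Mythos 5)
Your proposal is correct and follows essentially the same route as the paper: the paper presents Proposition \ref{conclusion1} as a summary of the preceding discussion in Section 4, and your write-up assembles exactly the same ingredients in the same order---the idempotent splitting along $\tau=\langle-1\rangle$, the identifications $\eta=0$ on the plus part and $\eta$ invertible on the minus part, Bachmann's $\text{SH}(\R)[\rho^{-1}]\simeq\text{SH}$, Cisinski--D\'eglise's $\text{SH}^+(k)\simeq\text{DM}(k)_\Q$, and Remark \ref{eta} ($M(\eta)=0$) to see that $M$ factors through the plus projection. Your closing caveat about commuting the $\Lambda$-localization, $\rho$-inversion, and $\tau$-idempotent splitting is reasonable diligence but, as you say, is bookkeeping that the paper also does not spell out.
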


For a smooth $\R$-variety $X$, its associated motivic spectrum $\Sigma^{\infty,\infty}X_+$ will split into two parts in $\text{SH}(\R)_{\Lambda}$ with $2\in\Lambda^\times$.
If we work on $\Lambda=\Q$, one is given by the rational Chow motives of $X$ which will be introduced later, and the other one is given by the rational $S^1$-spectrum associated
to the space $X(\R)$ adjoining a point. 

So to understand the splitting further, we need to work on these two parts separately. Fortunately, they will both decompose into smaller 
pieces if $X$ is a real abelian variety.

\section{Decomposition of motives}\label{motive}
In this section we will review the construction of Chow motives and revisit the result of decomposition of diagonals on abelian varieties due to Deninger-Murre and finally get to the result by K\"unnemann.
And this decomposition gives us the splitting in $\text{SH}(k)_{\Lambda}^+$ when $\Lambda=\Q$. 

Let $X$ be an algebraic variety over a field $k$. The Chow group $\text{CH}_r(X)$ of $r$-dimensional cycles modulo rational equivalence is defined as the quotient 
group of the free abelian group $\oplus_V \Z [V]$, where the sum runs over all irreducible subvarieties $V\subset X$ of dimension $r$, by the subgroup generated by divisors of rational functions. That is, for any rational function $f$ on an $(r+1)$-dimensional subvariety $U$ of $X$, the divisor $\text{div}(f)$ is a formal
sum of $r$-dimensional subvarieties of $U$ corresponding to the zeros and poles of $f$.

If $X$ is equidimensional, one may also use the co-dimensional notation $\text{CH}^r(X)=\text{CH}_{\text{dim}(X)-r}(X)$.

The definition of Chow groups is an algebro-geometric analogue of (co)homology in topology. Chow groups form an \emph{oriented cohomology theory} on $SmVar_k$. That 
is, for any morphism $f:X\to Y$ of smooth varieties, there is a pullback map $f^*:\text{CH}^*(Y)\to\text{CH}^*(X)$, and for any proper morphism, a
push-forward map $f_*:\text{CH}_*(X)\to\text{CH}_*(Y)$, satisfying standard functorial properties. In particular, $\text{CH}^*(X)$ carries a ring structure given by intersection
product $\alpha\cdot\beta=\Delta^*(\alpha\times\beta)$, where $\Delta:X\to X\times X$ is the diagonal embedding.

We now define the category of Chow correspondences, $\text{Cor}_k$, where $\text{Ob}(\text{Cor}(k))=\{ [X]|\:X\:\text{smooth projective over}\:k\}$ and $\text{Hom}_{\text{Cor}(k)}([X],[Y])=\text{CH}_{\text{dim}(X)}(X\times Y)$,
assuming $X$ is equidimensional. Cycles in $\text{CH}_*(X\times Y)$ are called \emph{correspondences} from $X$ to $Y$. The composition $\circ$ of correspondences is defined by
\begin{equation*}
    \psi\circ\varphi=(\pi_{X,Z})_*((\pi_{X,Y})^*\varphi\cdot(\pi_{Y,Z})^*\psi),
\end{equation*}
where $\varphi\in\text{CH}_{\text{dim}(X)}(X\times Y),\psi\in\text{CH}_{\text{dim}(Y)}(Y\times Z)$ and $\pi_{X,Y},\pi_{Y,Z},\pi_{X,Z}$ are the natural projections from $X\times Y\times Z$ to $X\times Y$, $Y\times Z$ and $X\times Z$ respectively.
The composition is associative and also that if we compose this way the graphs of morphisms, it will coincide with the composition of morphisms themselves. Thus, we get the functor 
\begin{equation*}
    \text{Cor}:\text{SmProj}_k\to\text{Cor}(k)
\end{equation*}
which sends $X$ to itself and $(f:X\to Y)$ to its graph $[\Gamma_f]\in\text{CH}_{\text{dim(X)}}(X\times Y)$.

The category $\text{Cor}(k)$ admits direct sums $X\oplus Y=X\coprod Y$. It also has the natural tensor structure given by $X\otimes Y=X\times Y$. 

We define the category of \emph{effective Chow motives}, $\text{Chow}_{eff}(k)$ as the pseudo-abelian (Karobian) envelope of $\text{Cor}(k)$. Its objects are pairs $(X,\rho)$, where 
$X$ is a smooth projective variety and $\rho\in\text{End}_{\text{Cor}(k)}(X)$ is a projector (i.e. $\rho\circ\rho=\rho$). Morphisms between objects are given by $\text{Hom}_{\text{Chow}_{eff}}((X,\rho),(Y,\eta))=\eta\circ\text{Hom}_{\text{Cor}(k)}(X,Y)\circ\rho\subset\text{Hom}_{\text{Cor}(k)}(X,Y)$.
In other words, we formally add kernels and cokernels of projectors. There is a tensor product on $\text{Chow}_{eff}(k)$ defined by:
\begin{equation*}
    (X,\rho)\otimes(Y,\eta)=(X\times_k Y, \rho\times_k \eta).
\end{equation*}

We also define the \emph{Chow motivic functor}:
\begin{equation*}
    Chow:\text{SmProj}_k\to\text{Chow}_{eff}(k),
\end{equation*}
which maps a smooth projective variety $X$ to the pair $(X,[\Delta_X])$, where $[\Delta_X]$ is the class of the diagonal.

In the category $\text{Chow}_{eff}(k)$, the motive of certain varieties splits into simpler components.

For example, let $X=\mathbb{P}^1$. Using the rational function $\frac{x_0 y_1-x_1 y_0}{x_0 y_0}$, we find that the class of diagonal is rationally equivalent to the sum $[pt \times\mathbb{P}^1]+[\mathbb{P}^1\times pt]$, corresponding to
mutually orthogonal projectors. Considering the maps $pt \stackrel{f}{\to}\mathbb{P}^1\stackrel{g}{\to}pt$, the first projector gives a direct summand in $Chow(\mathbb{P}^1)$
isomorphic to $Chow(pt):=1$, called the \emph{trivial Tate motive}. The complementary summand given by the projector $[\mathbb{P}^1 \times pt]$ is denoted
by $L$, called the \emph{Lefschetz motive}. Hence, $Chow(\mathbb{P}^1)=1 \oplus L$. 

The category $\text{Chow}(k)$ of \emph{Chow motives} is obtained from $\text{Chow}_{eff}(k)$ by formally inverting $L$ under the tensor product. We define $L^{-1}=\Z(1)$, which is the \emph{Tate motives}. Thus, to define objects in $\text{Chow}(k)$, 
we allow an additonal integer $n\in\Z$ to track the Tate motives. The effective category $\text{Chow}_{eff}(k)$ is recovered as the full subcategory of $\text{Chow}(k)$ where $n=0$. The tensor product on $\text{Chow}(k)$ is given by:
\begin{equation*}
    (X,\rho,m)\otimes(Y,\eta,n)=(X\times_k Y, \rho\times_k \eta, m+n),
\end{equation*}
so the category of Chow motives is tensor additive. We denote $\Z(n)=\Z(1)^{\otimes n}$, and define Tate twisting in $\text{Chow}(k)$ by $(X,p,m)\otimes\Z(n):=(X,p,m)(n)=(X,p,m+n)$.

The category of Chow motives can be embedded as a full tensor additive subcategory into the Voevodsky category of motives $\text{DM}_{gm}(k)$. Detailed treatments of this embedding can be found in \cite[Proposition 2.1.4, Theorem 3.2.6]{59}. In conjunction with Proposition \ref{tensor}, we observe that for a smooth projective variety $X$, $M(\Sigma^{\infty,\infty} X_+)$ 
actually actually lies within the category of Chow motives. In particular, since an abelian variety is both smooth and projective \cite{mumford1974abelian}, it is natural to study the decomposition of its Chow motive. For clarity and without risk of confusion, we will denote $Chow(X)=M_{gm}(X)$ in what follows.

Upon rationalizing the Chow groups—more precisely, by extending scalars from $\Z$ to $\Q$—one can construct the category of rational Chow motives, which can be embedded as a full subcategory of $\text{DM}(k)_{\Q}$ (and, correspondingly, $\text{DM}(k)_{\Lambda}$, for a coefficient ring $\Z\subset\Lambda\subset\Q$).
For simplicity, we will use the same notation for rational (respectively, $\Lambda$-coefficient) Chow motives as introduced above.

\begin{remark}\label{twist}
    In Proposition \ref{tensor}, combined with the constructions of $\text{SH}(k)$ and $\text{DM}(k)$, we actually have $M(S^{0,0})=1$ and $M(S^{2,1})\cong L$. Since $M$ is tensor exact, so after applying $M$, smashing with $S^{2,1}$ in $\text{SH}(k)$ will be the Tate twisting $(-1)$.
\end{remark}

We now consider an abelian variety $X$ over a field $k$, of dimension $g$. Our objective is to obtain a decomposition of the Chow motive $M_{gm}(X)$. This requires working in the rational Chow ring, as the decomposition of the diagonal relies on the Fourier transform for abelian varieties, which is constructed via convolution with the Chern character of the Poincar\'e bundle. 
We elaborate on this construction below, following primarily Chapter 13 of \cite{abelian}.

Let $X$ be a smooth variety over $k$ and let $K(X)$ denote the Grothendieck group of vector bundles on $X$. There exists a natural ring homomorphism
\begin{equation*}
    \text{ch}:K(X)\to CH^*_{\mathbb{Q}}(X),
\end{equation*}
called the \emph{Chern character}. For a line bundle $L$ with associated divisor class $l=c_1(L)\in\text{CH}^1_{\mathbb{Q}}(X)$, it is given by 
\begin{equation*}
    [L]\mapsto e^{l}:=1+l+\frac{1}{2}l^2+\frac{1}{3!}l^3+\cdots.
\end{equation*}
(Note that $e^{l}$ invovles only a finite sum, since $\text{CH}^i(X)=0$ for $i>\dim(X)$.)

We will require a variant of the above construction relative to a fixed base variety. To this end, let $k$ be a field, and let $S$ be a smooth quasi-projective $k$-scheme. Consider 
the category $\text{Sm}_S$ of smooth projective $S$-schemes. Let $X$ and $Y$ be two smooth projective $S$-schemes. Elements of $\text{CH}^*_{\mathbb{Q}}(X\times_S Y)$ are referred to as \emph{relative correspondences} between $X$ and $Y$.
As before we can compose correspondences. Moreover, we also consider abelian schemes over $S$ as a relative version of abelian varieties.

\begin{definition}\label{Pontryagin}
    Let $S$ be a quasi-projective smooth variety over a field $k$. Let $X$ be an abelian scheme 
    over $S$ with multiplication map $m: X\times_S X\to X$. The \emph{Pontryagin product}, or \emph{convolution product}
    $$*:\text{CH}^{*}(X)\times\text{CH}^{*}(X)\to\text{CH}^{*}(X)$$
    (relative to $S$) is the map defined by
    $$\alpha *\beta=m_{*}(p_1^{*} \alpha \cdot p_2^{*} \beta).$$
\end{definition}

Intuitively, the product $\alpha *\beta$ is obtained by adding the points on cycles representing $\alpha$
and $\beta$. Note that the Pontryagin product depends on the base variety $S$, although this dependence is 
not reflected in the notation.

We now state two lemmas concerning the Pontryagin product:

\begin{lemma}\cite{abelian}
	Let $g=\text{dim} (X/S)$. The Pontryagin product makes $\text{CH}^*(X)=\oplus_i \text{CH}^i(X)$ into a commutative
	ring for which the cycle $[e(S)]\in \text{CH}^g(X)$ given by the identity section $e(S)\subset X$ is the identity
	element.
\end{lemma}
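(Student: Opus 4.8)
The plan is to verify directly that the Pontryagin product $*$ endows $\mathrm{CH}^*(X) = \bigoplus_i \mathrm{CH}^i(X)$ with the structure of a commutative, associative, unital ring, with unit the class $[e(S)]$. First I would record the grading behavior: since $m\colon X\times_S X \to X$ is a proper morphism of smooth $S$-schemes of relative dimension $g$, the flat projections $p_1, p_2\colon X\times_S X \to X$ are of relative dimension $g$, so $p_i^*$ shifts codimension by $0$ (it preserves $\mathrm{CH}^\bullet$), while $m_*$ lowers codimension by $g$ (equivalently, preserves dimension). Hence for $\alpha \in \mathrm{CH}^i(X)$, $\beta \in \mathrm{CH}^j(X)$ we get $\alpha * \beta \in \mathrm{CH}^{i+j-g}(X)$, so $*$ is indeed an internal operation on the (unbounded) direct sum $\bigoplus_i \mathrm{CH}^i(X)$, which is why the statement is phrased with the direct sum rather than with the codimension grading preserved.

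Next I would establish the ring axioms. Commutativity follows because the multiplication map $m$ is symmetric: precomposing with the swap automorphism $\sigma\colon X\times_S X \to X\times_S X$ satisfies $m\circ\sigma = m$ and $p_1\circ\sigma = p_2$, $p_2\circ\sigma = p_1$, and $\sigma^*$ is a ring isomorphism for the intersection product, so $m_*(p_1^*\alpha\cdot p_2^*\beta) = m_*\sigma_*\sigma^*(p_1^*\alpha\cdot p_2^*\beta) = m_*(p_2^*\alpha\cdot p_1^*\beta) = \beta*\alpha$. For associativity I would compare both triple products with the symmetric expression $m^{(3)}_*(q_1^*\alpha\cdot q_2^*\beta\cdot q_3^*\gamma)$ on $X\times_S X\times_S X$, where $m^{(3)}$ is the threefold addition and $q_i$ the three projections; this reduces to the compatibility of proper pushforward with flat pullback (the projection formula) applied to the two fiber-square factorizations of $m^{(3)}$ through $m\times_S \mathrm{id}$ and $\mathrm{id}\times_S m$, together with associativity of the group law $m\circ(m\times_S\mathrm{id}) = m\circ(\mathrm{id}\times_S m)$. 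For the unit, $[e(S)]$ is the class of the image of the identity section $e\colon S\hookrightarrow X$; one computes $\alpha * [e(S)] = m_*(p_1^*\alpha\cdot p_2^*[e(S)])$. The cycle $p_2^*[e(S)]$ is supported on $X\times_S e(S) \cong X$, on which $p_1$ restricts to the identity and $m$ restricts to the identity; applying the projection formula with respect to the closed immersion $X\times_S e(S)\hookrightarrow X\times_S X$ gives $\alpha*[e(S)] = \alpha$.

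The main obstacle is the associativity step: making the fiber-product and cartesian-square bookkeeping precise, in particular checking that the relevant squares are indeed cartesian and that the push-pull (base change) identities apply, so that both iterated products equal the single symmetric threefold product. This is where one must be careful that all schemes involved are smooth over $S$ (so that the refined intersection/Gysin pullbacks behave well) and that properness of $m$ and $m^{(3)}$ is used correctly to have the pushforwards defined. Everything else — commutativity, the grading count, and the unit computation — is a routine application of the standard functoriality of Chow groups (projection formula, flat pullback, proper pushforward) recorded in the intersection-theory references, so I would cite those and not reproduce the computations in full. I would therefore refer the reader to \cite{abelian} for the detailed verification and simply indicate the three axioms and the role of the identity section as above.
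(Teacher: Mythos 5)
Your verification is correct and is the standard direct check; the paper itself offers no proof of this lemma, delegating it entirely to the citation \cite{abelian}, so there is nothing in the paper to diverge from. Your grading computation (flat pullback preserves codimension, proper pushforward along $m$ of relative dimension $g$ drops it by $g$), the commutativity via the swap automorphism, the reduction of associativity to the threefold sum $m^{(3)}$ and base-change/projection-formula bookkeeping, and the unit computation via the closed immersion $X\times_S e(S)\hookrightarrow X\times_S X$ together with the observation that $m$ and $p_1$ restrict to the same isomorphism on it, all match what one finds in the cited reference.
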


\begin{lemma}\cite{abelian}\label{push}
    Let $f:X\to Y$ be a homomorphism of abelian schemes over $S$. Then we have $f_*(\alpha*\beta)=f_*(\alpha)*f_*(\beta)$ 
    for all $\alpha$, $\beta\in\text{CH}^*(X)$.
\end{lemma}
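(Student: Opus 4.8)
Proof proposal for Lemma \ref{push} ($f_*(\alpha*\beta) = f_*(\alpha)*f_*(\beta)$ for a homomorphism $f\colon X\to Y$ of abelian schemes over $S$).

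The plan is to unwind both sides purely in terms of proper pushforward and flat (or lci) pullback along the obvious maps, and then reduce everything to the compatibility of pushforward with the multiplication maps $m_X\colon X\times_S X\to X$ and $m_Y\colon Y\times_S Y\to Y$ together with the base-change/projection-formula identities for Chow groups. First I would write out the right-hand side by definition: $f_*(\alpha)*f_*(\beta) = (m_Y)_*\!\left(q_1^*f_*\alpha \cdot q_2^*f_*\beta\right)$, where $q_1,q_2\colon Y\times_S Y\to Y$ are the two projections. The key structural fact is that $f$ being a homomorphism means the square
\begin{equation*}
    \begin{tikzcd}
        X\times_S X \arrow[r,"m_X"] \arrow[d,"f\times f"] & X \arrow[d,"f"]\\
        Y\times_S Y \arrow[r,"m_Y"] & Y
    \end{tikzcd}
\end{equation*}
commutes, and moreover it is \emph{cartesian} up to the fact that $f\times f$ sits over $f$ through the projections as well; more precisely $q_i\circ(f\times f) = f\circ p_i$ for $i=1,2$, where $p_1,p_2\colon X\times_S X\to X$ are the projections on the $X$-side. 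So I would first rewrite $q_i^* f_* = ?$: this is where the projection formula (base change for Chow groups along the flat projection $q_i$) enters, but the cleaner route is to push the whole computation \emph{forward} along $f\times f$ rather than pull back along $q_i$.

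Concretely, the main steps are: (i) use the projection formula for $m_Y$ together with $m_Y\circ(f\times f) = f\circ m_X$ to get $(m_Y)_*\big((f\times f)_*(\gamma)\cdot \delta\big)$-type identities; (ii) reduce $q_i^* f_* \alpha$ to $(f\times f)_*\,p_i^*\alpha$ — this is exactly the base-change statement for the cartesian diagram expressing $Y\times_S Y = (Y\times_S Y)\times_{Y}X$ along $q_i$, i.e. it uses that $f$ being proper (abelian schemes are proper over $S$) makes pushforward defined and that flat pullback commutes with proper pushforward in cartesian squares; (iii) invoke the compatibility of the intersection/exterior product with $(f\times f)_*$, namely $(f\times f)_*(p_1^*\alpha\cdot p_2^*\beta) = (f\times f)_*\,p_1^*\alpha \,\cdot\, q_2^*f_*\beta$ obtained by applying the projection formula for $f\times f$ one factor at a time, together with $q_i\circ(f\times f)=f\circ p_i$. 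Chaining these, the right-hand side becomes $(m_Y)_*(f\times f)_*(p_1^*\alpha\cdot p_2^*\beta) = f_*(m_X)_*(p_1^*\alpha\cdot p_2^*\beta) = f_*(\alpha*\beta)$, which is the claim.

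The main obstacle — really the only subtle point — is justifying step (ii)/(iii): that flat pullback along the projections commutes with proper pushforward along $f\times f$ in the relevant (cartesian) squares, and that the exterior product $p_1^*(-)\cdot p_2^*(-)$ on $X\times_S X$ is compatible with $(f\times f)_*$ via the projection formula applied successively in each variable. These are standard facts in intersection theory (Fulton, \emph{Intersection Theory}, Chapters 1, 8 — proper pushforward, flat pullback, refined projection formula, and compatibility of exterior products with these operations), and since everything is happening for smooth projective $S$-schemes with $\Q$-coefficients there are no transversality or denominator issues; I would simply cite these. No new input is needed beyond the previous lemma's identification of $*$ as $m_*(p_1^*(-)\cdot p_2^*(-))$ and the functoriality package for Chow groups, so the write-up should be short.
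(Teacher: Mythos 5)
The paper cites this lemma to \cite{abelian} without supplying its own proof, so there is no internal argument to compare against; I will review your proposal on its own merits. Your overall reduction is correct and standard: since $m_Y\circ(f\times f) = f\circ m_X$, the lemma follows once one shows $(f\times f)_*(p_1^*\alpha\cdot p_2^*\beta) = q_1^*f_*\alpha\cdot q_2^*f_*\beta$, i.e.\ compatibility of exterior products with proper pushforward. But your steps (ii) and (iii) for this key identity are incorrect as written. The claim in (ii) that $q_i^*f_*\alpha = (f\times f)_*p_i^*\alpha$ by base change in a cartesian square is false: the square with verticals $f\times f$, $f$ and horizontals $p_i$, $q_i$ is \emph{not} cartesian --- the fiber product of $q_i$ along $f$ is $X\times_S Y$ (resp.\ $Y\times_S X$), not $X\times_S X$. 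A numerical check shows the formula fails: if $f$ is an isogeny of degree $d$ and $\alpha = [X]$, then $q_i^*f_*[X] = d\,[Y\times_S Y]$ while $(f\times f)_*p_i^*[X] = d^2\,[Y\times_S Y]$. Step (iii) has the same problem in disguise: $p_2^*\beta$ is not of the form $(f\times f)^*\delta$ unless $\beta$ is pulled back from $Y$, so the projection formula for $f\times f$ does not directly produce $(f\times f)_*(p_1^*\alpha\cdot p_2^*\beta) = (f\times f)_*p_1^*\alpha\cdot q_2^*f_*\beta$.

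The repair is to factor $f\times f$ through the mixed product: write $f\times f = (1_Y\times f)\circ(f\times 1_X)$ with middle term $Y\times_S X$. For $g = f\times 1_X$, the square involving $p_1$ and the first projection $r_1\colon Y\times_S X\to Y$ really is cartesian, giving $g_*p_1^* = r_1^*f_*$; moreover $p_2^*\beta = g^*r_2^*\beta$, so the projection formula for $g$ applies cleanly and gives $g_*(p_1^*\alpha\cdot p_2^*\beta) = r_1^*f_*\alpha\cdot r_2^*\beta$. Repeating this for $1_Y\times f$ yields the wanted identity $(f\times f)_*(p_1^*\alpha\cdot p_2^*\beta) = q_1^*f_*\alpha\cdot q_2^*f_*\beta$, after which your step (i) closes the proof. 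Alternatively, you can simply cite the compatibility of exterior products with proper pushforward (Fulton, \emph{Intersection Theory}, Proposition~1.10) and skip the manipulation entirely.
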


We are now in a position to construct the Fourier transform accordingly.

\begin{definition}
	Situation as in \ref{Pontryagin}. Let $l=c_1(\mathcal{P}_X)\in\text{CH}^1(X\times_S X^t)$ be the class of the $Poincar\acute{e}$ bundle of
	$X$ and $X^t$ is the dual abelian scheme. We define the \emph{Fourier transform} $T$ of $X$ as the correspondence from $X$ to $X^t$ given by
	$$T=\text{ch} (\mathcal{P})=\exp (l)=1+l+\frac{1}{2!} l^2+\cdots \in \text{CH}^{*}_{\mathbb{Q}}(X\times_S X^t).$$
	We write
	$$\tau=\tau_{\text{CH}}:\text{CH}^{*}_{\mathbb{Q}}(X)\to \text{CH}^{*}_{\mathbb{Q}}(X^t)$$ 
	for the homomorphism associated to the element $[\mathcal{P}]\in K(X\times_S X^t)$. Concretely,
	$$\tau_{\text{CH}} (x)=p_{X^t, *} (e^l \cdot p^{*}_X x)=p_{X^t, *} (T \cdot p^{*}_X x) \text{ for } x\in \text{CH}^{*}_{\mathbb{Q}}(X).$$
\end{definition}

If $\tau^t$ is the Fourier transform on $X^t$, then

\begin{theorem}\cite{abelian}
    Situation as in (\ref{Pontryagin}). Let $g=\dim (X/S)$.\\
    We have $\tau^t_{\text{CH}}\circ\tau_{\text{CH}}=(-1)^g(-\text{id}_X)^*$. For all $x,y\in\text{CH}^*_{\mathbb{Q}}(X)$ we have the relations $\tau_{\text{CH}}(x*y)=\tau_{\text{CH}}(x)\cdot\tau_{\text{CH}}(y)$
    and $\tau_{\text{CH}}(x\cdot y)=(-1)^g\tau_{\text{CH}}(x)*\tau_{\text{CH}}(y)$.
\end{theorem}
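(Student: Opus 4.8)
The plan is to establish the three identities $\tau^t_{\text{CH}}\circ\tau_{\text{CH}}=(-1)^g(-\mathrm{id}_X)^*$, $\tau_{\text{CH}}(x*y)=\tau_{\text{CH}}(x)\cdot\tau_{\text{CH}}(y)$, and $\tau_{\text{CH}}(x\cdot y)=(-1)^g\tau_{\text{CH}}(x)*\tau_{\text{CH}}(y)$, all by reducing to formal manipulations of correspondences on products of the abelian scheme $X$ and its dual $X^t$. First I would recall the biextension/theorem-of-the-square property of the Poincar\'e bundle: under the various addition maps on $X$ and $X^t$, the class $l=c_1(\mathcal{P})$ is bi-additive, i.e.\ $(m_X\times\mathrm{id})^*l = p_{13}^*l + p_{23}^*l$ on $X\times_S X\times_S X^t$ and similarly in the $X^t$-variable. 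Exponentiating, this translates into a multiplicativity statement for $T=\exp(l)$, which is exactly the engine that converts Pontryagin products into intersection products and vice versa.

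For the composition formula, I would compute $\tau^t_{\text{CH}}\circ\tau_{\text{CH}}$ directly as a composition of correspondences: by definition it is $p_{X\times X, *}$ of a product of pullbacks of $\exp(l)$ and $\exp(l^t)$ over the triple product $X\times_S X^t\times_S X$, where one uses the canonical identification $X^{tt}\cong X$ so that the Poincar\'e bundle on $X^t\times_S X^{tt}$ pulls back to $\mathcal{P}$ (up to the sign built into the biduality isomorphism, which is the source of the $(-\mathrm{id}_X)^*$). The key computational input is that $\exp(l)\cdot\exp(l') = \exp(l+l')$ together with a projection-formula evaluation of $p_{X^t,*}\exp(\text{(sum of pullbacks)})$ — the pushforward along $X^t$ picks out the top-degree class $l^{tg}/g!$ on the relative dimension-$g$ fibers, and the Poincar\'e bundle duality $p_{*}(l^g/g!)=(-1)^g[\text{diagonal-type class}]$ produces both the sign $(-1)^g$ and the graph of $-\mathrm{id}_X$. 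I would organize this as: (i) write both sides as cycles on $X\times_S X$, (ii) use bi-additivity to collapse the exponential of a sum, (iii) integrate over the middle factor $X^t$ using the normalization of $\mathcal{P}$ along the zero sections.

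The two product formulas I would then deduce more cheaply. For $\tau_{\text{CH}}(x*y)=\tau_{\text{CH}}(x)\cdot\tau_{\text{CH}}(y)$: unwind $x*y = m_{X,*}(p_1^*x\cdot p_2^*y)$, push through $\tau_{\text{CH}}(-)=p_{X^t,*}(T\cdot p_X^*(-))$, and use the projection formula together with $(m_X\times\mathrm{id}_{X^t})^*l = p_{13}^*l+p_{23}^*l$ to factor $\exp(l)$ across the two variables; this directly yields the intersection product of the two transforms. The second product formula is the Fourier-inversion shadow of the first: apply the already-proved inversion formula to rewrite an intersection product on $X$ as $(\pm)$ a Pontryagin product of transforms pulled back from $X^t$, invoke the first product formula on $X^t$, and transport back, with the $(-1)^g$ emerging from the two applications of inversion. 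I expect the main obstacle to be bookkeeping the biduality sign and the exact placement of $(-1)^g$ — keeping straight which copy of $X$ versus $X^t$ carries which pullback of $\mathcal{P}$, and tracking the degree-shift from each relative-dimension-$g$ pushforward — rather than any conceptual difficulty; all the geometric content is in the bi-additivity of $l$ and the normalization $p_{*}(l^g/g!)=\pm 1$.
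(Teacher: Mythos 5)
The paper states this theorem with a citation to \cite{abelian} and does not reproduce a proof, so there is no in-paper argument to compare against; your outline correctly reconstructs the standard proof from that reference. The three ingredients you isolate — bi-additivity of $l=c_1(\mathcal{P})$ under the group laws on $X$ and $X^t$, the projection-formula computation of the composite correspondence $p_{13,*}\exp(p_{12}^*l+p_{23}^*l^t)$ together with the normalization $p_{X^t,*}\exp(l)=(-1)^g[e^t(S)]$, and then deducing the second product formula from the first plus inversion applied twice (with $(-1)^g$ emerging from $\tau^t\circ\tau$) — are exactly the mechanism used there, so this is essentially the same approach.
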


As a further corollary, we obtain the following elegant result.

\begin{theorem}\cite{abelian}
	The Fourier transform of $X$ induces an isomorphism of rings
	$$\tau=\tau_{\text{CH}}:(\text{CH}^*_{\mathbb{Q}}(X), *)\xrightarrow{\thicksim} (\text{CH}^*_{\mathbb{Q}}(X^t), \cdot),$$
	where $\cdot$ and $*$ denote the intersection product and the convolution product, respectively.
\end{theorem}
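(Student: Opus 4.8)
The plan is to deduce this isomorphism directly from the previous theorem, which already records that $\tau_{\text{CH}}(x*y)=\tau_{\text{CH}}(x)\cdot\tau_{\text{CH}}(y)$ — that is, $\tau_{\text{CH}}$ is a ring homomorphism from $(\text{CH}^*_{\mathbb Q}(X),*)$ to $(\text{CH}^*_{\mathbb Q}(X^t),\cdot)$. So the only remaining content is bijectivity, and for that I would use the symmetric companion $\tau^t_{\text{CH}}:\text{CH}^*_{\mathbb Q}(X^t)\to\text{CH}^*_{\mathbb Q}(X)$ (the Fourier transform on the dual abelian scheme, built from the Poincaré bundle $\mathcal P_{X^t}$ on $X^t\times_S X^{tt}\cong X^t\times_S X$) together with the inversion formula $\tau^t_{\text{CH}}\circ\tau_{\text{CH}}=(-1)^g(-\mathrm{id}_X)^*$ already established.

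First I would note that $(-\mathrm{id}_X)^*$ is an involution on $\text{CH}^*_{\mathbb Q}(X)$: since $(-\mathrm{id}_X)\circ(-\mathrm{id}_X)=\mathrm{id}_X$, we get $((-\mathrm{id}_X)^*)^2=\mathrm{id}$, hence $(-\mathrm{id}_X)^*$ is invertible. Therefore $(-1)^g(-\mathrm{id}_X)^*$ is invertible, and from $\tau^t_{\text{CH}}\circ\tau_{\text{CH}}=(-1)^g(-\mathrm{id}_X)^*$ it follows that $\tau_{\text{CH}}$ is injective (split mono) and $\tau^t_{\text{CH}}$ is surjective. Applying the same reasoning with the roles of $X$ and $X^t$ swapped — using $\tau_{\text{CH}}\circ\tau^t_{\text{CH}}=(-1)^g(-\mathrm{id}_{X^t})^*$, which is the inversion formula for the dual abelian scheme and is again invertible — shows that $\tau^t_{\text{CH}}$ is injective and $\tau_{\text{CH}}$ is surjective. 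Combining, $\tau_{\text{CH}}$ is a bijection, with inverse $(-1)^g((-\mathrm{id}_{X^t})^*)^{-1}\circ\tau^t_{\text{CH}}=(-1)^g(-\mathrm{id}_{X^t})^*\circ\tau^t_{\text{CH}}$.

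Finally I would combine the two facts: $\tau_{\text{CH}}$ is a ring homomorphism $(\text{CH}^*_{\mathbb Q}(X),*)\to(\text{CH}^*_{\mathbb Q}(X^t),\cdot)$ by the multiplicativity relation, and it is bijective by the argument above; hence it is a ring isomorphism. One should also check it respects units — it sends the identity $[e(S)]$ for $*$ to the identity for $\cdot$, i.e. the fundamental class $[X^t]$ — which follows formally since a bijective ring homomorphism between unital rings automatically preserves units, or directly from $\tau_{\text{CH}}([e(S)])=p_{X^t,*}(e^l\cdot p_X^*[e(S)])=p_{X^t,*}(e^l|_{e(S)\times_S X^t})=p_{X^t,*}[e(S)\times_S X^t]=[X^t]$. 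The main (and essentially only) obstacle is purely bookkeeping: making sure the identification $X^{tt}\cong X$ is compatible with the Poincaré bundles so that $\tau^t_{\text{CH}}\circ\tau_{\text{CH}}$ really lands back on $\text{CH}^*_{\mathbb Q}(X)$ as written, and that the sign $(-1)^g$ and the automorphism $(-\mathrm{id})^*$ are tracked correctly in both compositions — none of which is deep, given the preceding theorems.
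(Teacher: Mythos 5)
Your argument is correct and is exactly the deduction the paper intends: the paper states this result as a corollary of the preceding theorem (the inversion formula $\tau^t_{\text{CH}}\circ\tau_{\text{CH}}=(-1)^g(-\mathrm{id}_X)^*$ together with the multiplicativity $\tau_{\text{CH}}(x*y)=\tau_{\text{CH}}(x)\cdot\tau_{\text{CH}}(y)$) and omits the proof, which is precisely what you supply. One small typo: the formula for the inverse should read $(-1)^g(-\mathrm{id}_X)^*\circ\tau^t_{\text{CH}}$ (with $X$, not $X^t$), so that the composition typechecks, since $\tau^t_{\text{CH}}$ lands in $\text{CH}^*_{\mathbb Q}(X)$ where $(-\mathrm{id}_X)^*$ acts; equivalently one may write it as $\tau^t_{\text{CH}}\circ\bigl((-1)^g(-\mathrm{id}_{X^t})^*\bigr)$.
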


This theorem provides justification for the terminology “Fourier transform.” Just as the classical Fourier transform for functions on 
the real line transforms the convolution product into the pointwise product, our Fourier transform interchanges the Pontryagin product—which may be viewed as a kind of convolution product—with the usual intersection product.

\begin{proposition}\cite{abelian}
    Let $X/S$ be an abelian scheme of relative dimension $g$. Let $\xi^t:X^t\to S$ with zero section $e^t:S\to X^t$. Then 
    we have 
    \begin{equation*}
        \tau_{\text{CH}}(1_X)=(-1)^g\cdot e_*^t(1_S)
    \end{equation*}
    in $\text{CH}^*_{\mathbb{Q}}(X^t)$.
\end{proposition}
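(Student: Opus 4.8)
The plan is to avoid computing the pushforward $p_{X^t,*}(\text{ch}(\mathcal{P}))$ by hand, and instead to pin down $\tau_{\text{CH}}(1_X)$ purely from the compatibility of the Fourier transform with the two product structures, which has just been established. The point is that $(-1)^g\tau_{\text{CH}}(1_X)$ will be forced to be the multiplicative unit of the Pontryagin ring on $X^t$.

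First I would record that, by the ring-isomorphism theorem above, $\tau_{\text{CH}}\colon\text{CH}^*_{\mathbb{Q}}(X)\to\text{CH}^*_{\mathbb{Q}}(X^t)$ is in particular a bijection of $\mathbb{Q}$-vector spaces. Next I would substitute $x=1_X$ into the relation $\tau_{\text{CH}}(x\cdot y)=(-1)^g\,\tau_{\text{CH}}(x)*\tau_{\text{CH}}(y)$ and use that $1_X$ is the unit of the intersection product, obtaining for every $y\in\text{CH}^*_{\mathbb{Q}}(X)$
$$\tau_{\text{CH}}(y)=\tau_{\text{CH}}(1_X\cdot y)=(-1)^g\,\tau_{\text{CH}}(1_X)*\tau_{\text{CH}}(y).$$
Since $y\mapsto\tau_{\text{CH}}(y)$ is surjective, this says exactly that $(-1)^g\,\tau_{\text{CH}}(1_X)$ is a two-sided identity for the Pontryagin product on $\text{CH}^*_{\mathbb{Q}}(X^t)$.

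Finally I would invoke the first lemma on the Pontryagin product, applied to the abelian scheme $X^t/S$: the class $e^t_*(1_S)=[e^t(S)]\in\text{CH}^g(X^t)$ of the zero section is the identity element of $(\text{CH}^*_{\mathbb{Q}}(X^t),*)$, and an identity element is unique. Comparing the two, $(-1)^g\,\tau_{\text{CH}}(1_X)=e^t_*(1_S)$, hence $\tau_{\text{CH}}(1_X)=(-1)^g\,e^t_*(1_S)$, which is the assertion.

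I do not anticipate a genuine obstacle here; the only things to be careful about are keeping the units of the two ring structures distinct and not letting the sign slip (note $((-1)^g)^{-1}=(-1)^g$). For contrast, the more computational route — expanding $\tau_{\text{CH}}(1_X)=p_{X^t,*}\big(\exp(c_1(\mathcal{P}))\big)$ and checking that the only surviving contribution is the degree-$g$ term, which equals $(-1)^g[e^t(S)]$ — would require Mukai's vanishing $R^ip_{X^t,*}\mathcal{P}=0$ for $i\neq g$ together with Grothendieck--Riemann--Roch and the triviality of the relative tangent bundle of $X^t/S$, i.e. strictly more machinery than this section has set up.
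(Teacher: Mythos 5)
The paper does not include a proof of this proposition; it is stated with a citation to the source text. That said, your argument is correct and is a genuinely different route from the computational one that a direct reference would likely take (expanding $p_{X^t,*}\bigl(\exp(c_1(\mathcal{P}))\bigr)$ and identifying the surviving term). You extract the result purely formally from the two statements just preceding it in this section: bijectivity of $\tau_{\text{CH}}$ from the ring-isomorphism theorem, the compatibility $\tau_{\text{CH}}(x\cdot y)=(-1)^g\,\tau_{\text{CH}}(x)*\tau_{\text{CH}}(y)$, and the lemma identifying $e^t_*(1_S)$ as the Pontryagin unit on $X^t$; the sign is handled correctly since $(-1)^{-g}=(-1)^g$. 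What this buys is an argument that needs no further geometric input once the Fourier-transform formalism is in place. The one thing worth making explicit (and you should say it) is that there is no circularity: the relation $\tau_{\text{CH}}(x\cdot y)=(-1)^g\,\tau_{\text{CH}}(x)*\tau_{\text{CH}}(y)$ you invoke is a consequence of the first multiplicativity relation together with $\tau^t_{\text{CH}}\circ\tau_{\text{CH}}=(-1)^g(-\mathrm{id}_X)^*$, neither of whose proofs uses the present proposition, so the order of dependencies is consistent with the order in which the paper lists these results.
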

    
Now, let $S$ be a smooth connected quasi-projective scheme of dimension $d$ over a field $k$. We consider an abelian scheme $f:X\to S$ of relative dimension $g$. 
   
If $x\in X(S)$ is a section of $f$, we define the graph class $[\Gamma_x]$ of $x$ by
\begin{equation*}
    [\Gamma_x]:=x_*[S]=[x(S)]\in\text{CH}^g_{\mathbb{Q}}(X).
\end{equation*}
In particular, $[\Gamma_e]$ is the identity element of $\text{CH}^*_{\mathbb{Q}}(X)$ for the Pontryagin product.

Next, let $i_x:=x\times 1_{X^t}:S\times_S X^t\to X\times_S X^t$, and consider the pull-back $i^*_x(l)\in \text{CH}^1_{\mathbb{Q}}(X^t)$ of the Poincar\'{e} bundle. The following
two formulas, due to Beauville \cite{beauville2006quelques}, express relations between $i^*_x(l)$ and the graph classes $[\Gamma_x]$.

\begin{lemma}\cite{abelian}\label{Beauville}
    For all $x\in X(S)$ we have 
    \begin{equation*}
         \tau([\Gamma_x])=\exp(i^*_x l) \text{ and } \tau^t(i^*_x l)=(-1)^{g+1}\sum^{g+d}_{j=1} \frac{(-1)^j}{j}\cdot ([\Gamma_x]-[\Gamma_e])^{*j}.
    \end{equation*}
\end{lemma}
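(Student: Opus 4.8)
The plan is to derive both formulas in Lemma \ref{Beauville} from the properties of the Fourier transform established above, treating $i_x^*(l)$ as a pullback of the Chern class of the Poincar\'e bundle along the section $x$. For the first identity, I would begin by unwinding the definition $\tau([\Gamma_x]) = p_{X^t,*}(e^l \cdot p_X^*[\Gamma_x])$. Since $[\Gamma_x] = x_*[S]$, the projection formula for the proper pushforward along $x \times 1_{X^t}$ (together with flat base change through the fibre square relating $S \times_S X^t$ and $X \times_S X^t$) lets me rewrite $p_X^*[\Gamma_x]$ and pull $e^l$ back along $i_x = x \times 1_{X^t}$. Concretely, $p_{X^t,*}(e^l \cdot p_X^* x_*[S]) = p_{X^t,*}\, (i_x)_* (i_x^* e^l) = (i_x^* e^l) = \exp(i_x^* l)$, where the middle projection $S \times_S X^t \to X^t$ is the identity. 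This is the formula $\tau([\Gamma_x]) = \exp(i_x^* l)$.

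For the second identity, the idea is to invert the first one using the ring isomorphism $\tau : (\mathrm{CH}^*_{\mathbb{Q}}(X), *) \xrightarrow{\sim} (\mathrm{CH}^*_{\mathbb{Q}}(X^t), \cdot)$. Apply $\tau^t$ to $\exp(i_x^* l) = \tau([\Gamma_x])$; since $\tau^t \circ \tau = (-1)^g (-\mathrm{id}_X)^*$ we get $\tau^t(\exp(i_x^* l)) = (-1)^g (-\mathrm{id}_X)^* [\Gamma_x]$, but because $\tau$ carries the intersection product on $X^t$ to the Pontryagin product on $X$ (up to the sign $(-1)^g$ noted in the theorem on $\tau_{\mathrm{CH}}(x \cdot y)$), the exponential $\exp(i_x^* l) = \sum_j \frac{1}{j!}(i_x^* l)^{\cdot j}$ is sent to the Pontryagin-exponential of $\tau^t(i_x^* l)$. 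So one must solve $\tau^t(i_x^* l)$ from a ``Pontryagin exp/log" relation. Writing $e^{*\alpha}$ for the exponential with respect to $*$, the previous step gives $e^{* \tau^t(i_x^*l)}$ (up to scalar factors of $(-1)^g$ collected carefully) equal to $(-\mathrm{id}_X)^*[\Gamma_x]$ times the appropriate sign. Then $\tau^t(i_x^* l)$ is the Pontryagin-logarithm, i.e. $\sum_{j \ge 1} \frac{(-1)^{j-1}}{j} (\text{leading term})^{*j}$, of that class; identifying the ``leading term" as $[\Gamma_x] - [\Gamma_e]$ (so that its $*$-powers vanish in high enough degree, capping the sum at $j = g+d = \dim X$) and bookkeeping the signs produces $\tau^t(i_x^* l) = (-1)^{g+1} \sum_{j=1}^{g+d} \frac{(-1)^j}{j} ([\Gamma_x] - [\Gamma_e])^{*j}$.

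The main obstacle I expect is the careful tracking of the sign conventions and of the base-dimension shift $g \mapsto g + d$ in the truncation of the logarithm series. The argument that the Pontryagin $*$-powers of $[\Gamma_x] - [\Gamma_e]$ vanish for $j > \dim X = g + d$ requires knowing that $[\Gamma_x] - [\Gamma_e]$ lies in the augmentation ideal for the $*$-structure (it maps to $0$ under pushforward to $S$, since both graph classes push forward to $[S]$) and that this ideal is nilpotent of the expected order; this is where one needs the structure of $\mathrm{CH}^*_{\mathbb{Q}}(X)$ as a graded ring under $*$ and the fact that the section classes sit in the top codimension piece $\mathrm{CH}^g_{\mathbb{Q}}(X)$. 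Combining the grading of $*$ with $\dim X = g+d$ gives the stated range. Once the nilpotence and the compatibility of $\tau$, $\tau^t$ with the two products (including all the $(-1)^g$ factors from the theorem on $\tau_{\mathrm{CH}}$) are in hand, both formulas follow by a formal manipulation of exponential and logarithm power series, exactly as in Beauville \cite{beauville2006quelques}.
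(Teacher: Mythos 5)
The paper does not actually prove this lemma; it is stated with a citation to \cite{abelian} (Beauville's result, as reproduced in Edixhoven--van der Geer--Moonen), so there is no in-paper argument to compare against. Your proposal is essentially the standard Beauville argument, and your treatment of the first identity is correct: rewriting $[\Gamma_x]=x_*[S]$, using flat base change through the Cartesian square defining $i_x$ to get $p_X^*x_*[S]=(i_x)_*[X^t]$, applying the projection formula, and observing $p_{X^t}\circ i_x=\mathrm{id}_{X^t}$ gives $\tau([\Gamma_x])=i_x^*(e^l)=\exp(i_x^*l)$ cleanly.

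For the second identity, the Fourier-inversion strategy you describe is also the right one, and your sign bookkeeping (powers of $(-1)^g$ coming from $\tau^t\circ\tau=(-1)^g(-\mathrm{id}_X)^*$, from $\tau^t(1)=(-1)^g[\Gamma_e]$, and from $\tau^t(a\cdot b)=(-1)^g\tau^t(a)*\tau^t(b)$) is exactly what has to be collected. However, the justification you give for the nilpotence of $[\Gamma_x]-[\Gamma_e]$ and the truncation at $j=g+d$ does not hold up as stated. You appeal to ``the structure of $\mathrm{CH}^*_{\mathbb{Q}}(X)$ as a graded ring under $*$'' together with the fact that $[\Gamma_x]\in\mathrm{CH}^g(X)$, but the Pontryagin product sends $\mathrm{CH}_a\times\mathrm{CH}_b$ into $\mathrm{CH}_{a+b-d}$, so the $*$-powers of a class in $\mathrm{CH}_d(X)$ (equivalently $\mathrm{CH}^g$) all stay in $\mathrm{CH}_d(X)$ --- the (co)dimension grading does not climb, and it cannot by itself force vanishing for $j>g+d$. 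The correct reason is a Fourier-side codimension argument: since $\tau([\Gamma_x]-[\Gamma_e])=\exp(i_x^*l)-1\in\bigoplus_{i\ge1}\mathrm{CH}^i_{\mathbb{Q}}(X^t)$ and $\tau$ is a ring isomorphism from $(\mathrm{CH}^*_{\mathbb{Q}}(X),*)$ to $(\mathrm{CH}^*_{\mathbb{Q}}(X^t),\cdot)$, one has $\tau\bigl(([\Gamma_x]-[\Gamma_e])^{*j}\bigr)=(\exp(i_x^*l)-1)^j$, which lies in $\mathrm{CH}^{\ge j}_{\mathbb{Q}}(X^t)$ and hence vanishes for $j>\dim X^t=g+d$; injectivity of $\tau$ then gives the desired nilpotence. (Alternatively, work directly with $\tau^t(i_x^*l)$ and observe that $i_x^*l$ lies in $\mathrm{CH}^1$, so its intersection powers are controlled by the codimension bound on $X^t$.) With this replacement for the nilpotence step, the rest of your outline --- applying $\tau^t$ to the first identity, interpreting the left side as a Pontryagin-exponential, using $\log^*([\Gamma_{-x}])=-\log^*([\Gamma_x])$, and collecting the $(-1)^g$ factors --- goes through.
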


We also have the multiplicative rule for graph classes under the Pontryagin product:

\begin{lemma}\cite{abelian}\label{multiplication}
    For $x,y\in X(S)$ we have $[\Gamma_x]*[\Gamma_y]=[\Gamma_{x+y}]$.
\end{lemma}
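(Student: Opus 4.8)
The plan is to unwind the definition $[\Gamma_x]*[\Gamma_y]=m_*\bigl(p_1^*[\Gamma_x]\cdot p_2^*[\Gamma_y]\bigr)$ and compute this intersection product geometrically on $X\times_S X$. First I would record the basic facts about the cycles involved. Since $f:X\to S$ is smooth with $S$ smooth over $k$, each section $x:S\to X$ is a regular closed immersion of codimension $g$, so $\Gamma_x=x(S)$ and $\Gamma_y=y(S)$ are smooth closed subvarieties of $X$ and the graph classes are their fundamental classes in $\text{CH}^g_{\mathbb{Q}}(X)$. The projections $p_1,p_2:X\times_S X\to X$ are smooth (base changes of $f$), hence flat, so $p_1^*[\Gamma_x]=[x(S)\times_S X]$ and $p_2^*[\Gamma_y]=[X\times_S y(S)]$ as cycles, with no multiplicities.

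Next I would compute the intersection product. The scheme-theoretic intersection $(x(S)\times_S X)\cap(X\times_S y(S))$ inside $X\times_S X$ is $x(S)\times_S y(S)$, which under the structural isomorphisms $x(S)\cong S\cong y(S)$ is isomorphic to $S\times_S S=S$, hence smooth of dimension $d$. As $X\times_S X$ is smooth of dimension $2g+d$ and the two subvarieties are smooth of codimension $g$, the intersection is dimensionally proper and transverse, so $p_1^*[\Gamma_x]\cdot p_2^*[\Gamma_y]=[x(S)\times_S y(S)]=[(x,y)(S)]$ in $\text{CH}^{2g}_{\mathbb{Q}}(X\times_S X)$, where $(x,y):S\to X\times_S X$ is the product section. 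Finally, by the definition of the group law on $X$, $m\circ(x,y)=x+y$ as sections $S\to X$, so $m$ restricts to an isomorphism $(x,y)(S)\xrightarrow{\sim}(x+y)(S)$ (both are copies of $S$, and the composite with the section isomorphisms is $\mathrm{id}_S$); therefore $m_*[(x,y)(S)]=[(x+y)(S)]=[\Gamma_{x+y}]$. Combining the three steps yields $[\Gamma_x]*[\Gamma_y]=[\Gamma_{x+y}]$.

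The main obstacle is the transversality assertion in the middle step, i.e.\ that the proper intersection carries multiplicity exactly $1$. This follows because the scheme-theoretic intersection is smooth of the expected dimension inside the smooth ambient variety $X\times_S X$, which forces the intersection to be transverse; alternatively one checks it locally, observing that near a common point $x(S)\times_S X$ and $X\times_S y(S)$ are cut out by "disjoint" sets of local equations, the first coming from the first factor and the second from the second factor, so their differentials are linearly independent.

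A shorter alternative avoids intersection theory on $X\times_S X$ entirely by transporting the identity through the Fourier transform. Since $\tau$ is a ring isomorphism from $(\text{CH}^*_{\mathbb{Q}}(X),*)$ to $(\text{CH}^*_{\mathbb{Q}}(X^t),\cdot)$ and $\tau([\Gamma_x])=\exp(i_x^*l)$ by Lemma \ref{Beauville}, one gets $\tau\bigl([\Gamma_x]*[\Gamma_y]\bigr)=\exp(i_x^*l)\cdot\exp(i_y^*l)=\exp(i_x^*l+i_y^*l)$, so it suffices to know $i_{x+y}^*l=i_x^*l+i_y^*l$ in $\text{CH}^1_{\mathbb{Q}}(X^t)$; this is precisely the statement that $x\mapsto(x\times 1_{X^t})^*\mathcal{P}$ is a homomorphism $X(S)\to\mathrm{Pic}(X^t)$, i.e.\ the bilinearity (biextension property) of the Poincar\'e bundle. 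I would present the geometric argument as the main proof since it is self-contained, and note this alternative as a remark.
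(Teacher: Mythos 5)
The paper does not actually prove this lemma; it is stated with a citation to \cite{abelian} and no argument. Your geometric proof is correct and is essentially the standard one found in that reference: unwind $[\Gamma_x]*[\Gamma_y]=m_*(p_1^*[\Gamma_x]\cdot p_2^*[\Gamma_y])$, use flat pullback along the smooth projections to identify $p_1^*[\Gamma_x]=[x(S)\times_S X]$ and $p_2^*[\Gamma_y]=[X\times_S y(S)]$, note that the scheme-theoretic intersection is $x(S)\times_S y(S)\cong S$, which is smooth of the expected dimension in the smooth ambient $X\times_S X$ and hence transverse (so the intersection product has multiplicity one), and finally push forward by the proper map $m$, which restricts to an isomorphism $(x,y)(S)\xrightarrow{\sim}(x+y)(S)$. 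Your justification of transversality via smoothness of the scheme-theoretic intersection is sound: for smooth $V,W$ in smooth $M$, the Zariski tangent space of $V\times_M W$ at a point is $T_pV\cap T_pW$, so if $V\cap W$ is smooth of the expected dimension then $T_pV+T_pW=T_pM$.

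One small caveat on your alternative via the Fourier transform: it rests on the biextension/theorem-of-the-square identity $i^*_{x+y}l=i^*_xl+i^*_yl$ together with Lemma \ref{Beauville}, and in the development followed by \cite{abelian} the proof of the Beauville identities is downstream of (or intertwined with) the elementary Pontryagin-product facts, so presenting the geometric argument as primary and flagging the transform argument as a consistency check, as you do, is the right call.
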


In view of Lemma (\ref{Beauville}) we now put 

\begin{equation*}
    \log([\Gamma_x]):=(-1)^{g+1}\cdot\tau^t(i^*_x l).
\end{equation*}

And we have 

\begin{corollary}\cite{abelian}\label{log}
    The map $x\mapsto\log([\Gamma_x])$ is a group homomorphism.
\end{corollary}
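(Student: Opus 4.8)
The plan is to factor the map $x\mapsto\log([\Gamma_x])$ through the single auxiliary class $i_x^*l\in\text{CH}^1_{\mathbb Q}(X^t)$ and to reduce everything to ingredients already at hand: the multiplicativity of the Fourier transform, Beauville's identity $\tau([\Gamma_x])=\exp(i_x^*l)$ from Lemma~\ref{Beauville}, and the additivity of $\tau^t$. The target group is the additive group $(\text{CH}^*_{\mathbb Q}(X),+)$.

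First I would record the relevant nilpotency. Since $X^t$ has dimension $g+d$ over $k$, one has $\text{CH}^i_{\mathbb Q}(X^t)=0$ for $i>g+d$, so every element of $\text{CH}^{\ge1}_{\mathbb Q}(X^t)$ is nilpotent for the intersection product; in particular $i_x^*l$ is, and hence $\exp(i_x^*l)$ is a finite sum lying in $1+(\text{nilradical})$, where the unit is $[\Gamma_e]$, the identity for the Pontryagin product. Over a $\mathbb Q$-algebra, $\exp$ and $\log$ are mutually inverse bijections between the nilradical and $1+(\text{nilradical})$ and satisfy $\exp(a+b)=\exp(a)\cdot\exp(b)$ for commuting nilpotents; these are truncated formal power series identities, and the same remarks apply on $X$. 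Next I would show that $x\mapsto i_x^*l$ is additive: by Lemma~\ref{multiplication} one has $[\Gamma_{x+y}]=[\Gamma_x]*[\Gamma_y]$, so applying the Fourier transform $\tau$ (which carries $*$ to the intersection product) together with $\tau([\Gamma_z])=\exp(i_z^*l)$ gives $\exp(i_{x+y}^*l)=\exp(i_x^*l)\cdot\exp(i_y^*l)=\exp(i_x^*l+i_y^*l)$, whence $i_{x+y}^*l=i_x^*l+i_y^*l$ after applying $\log$.

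To conclude, I would use that $\tau^t=\tau^t_{\text{CH}}$ is the $\mathbb Q$-linear operator on Chow groups attached to a correspondence (the Poincar\'e bundle on $X^t\times_S X$), hence additive, so
\[
\log([\Gamma_{x+y}])=(-1)^{g+1}\tau^t(i_{x+y}^*l)=(-1)^{g+1}\tau^t(i_x^*l)+(-1)^{g+1}\tau^t(i_y^*l)=\log([\Gamma_x])+\log([\Gamma_y]),
\]
which is exactly the assertion.

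I do not expect a genuine obstacle: every nontrivial input has already been established above, and the only care needed is bookkeeping — tracking which product ($*$ versus $\cdot$) is in use at each stage and justifying the $\exp/\log$ manipulations via the nilpotency remark. A self-contained variant avoids $i_x^*l$ entirely: the second formula of Lemma~\ref{Beauville} simplifies to $\log([\Gamma_x])=\sum_{j\ge1}\tfrac{(-1)^j}{j}([\Gamma_x]-[\Gamma_e])^{*j}$, i.e.\ minus the formal logarithm of the commutative $\mathbb Q$-algebra $(\text{CH}^*_{\mathbb Q}(X),*)$ evaluated at $[\Gamma_x]$ (here one needs $[\Gamma_x]-[\Gamma_e]$ nilpotent, which follows since $\exp(i_x^*l)-1$ is nilpotent and $\tau$ is a ring isomorphism), and then additivity is the standard identity $\log_*(u*v)=\log_*(u)+\log_*(v)$ applied to $[\Gamma_{x+y}]=[\Gamma_x]*[\Gamma_y]$.
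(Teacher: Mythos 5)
Your proof is correct, and in both of its variants it rests on the same core idea as the paper's one-line proof: the formal power series identity $\log((1+a)(1+b))=\log(1+a)+\log(1+b)$ applied to nilpotent elements, combined with Lemma~\ref{multiplication} and the Beauville formulas. Your ``self-contained variant'' at the end \emph{is} the paper's argument: by the second formula in Lemma~\ref{Beauville}, $\log([\Gamma_x])$ is (up to an overall sign) the formal logarithm of the commutative $\Q$-algebra $(\text{CH}^*_{\Q}(X),*)$ evaluated at $[\Gamma_x]$, and then additivity is immediate from $[\Gamma_{x+y}]=[\Gamma_x]*[\Gamma_y]$. Your main route is a slight reorganization of the same argument: you take the formal $\log$ on $X^t$ in the intersection-product ring to first establish additivity of $x\mapsto i_x^*l$, and then conclude by $\Q$-linearity of $\tau^t$. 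This is valid and invokes exactly the same nilpotency observations and $\exp/\log$ bookkeeping, so it is not a meaningfully different approach, just a mirror image (taking $\log$ on $X^t$ with $\cdot$ rather than on $X$ with $*$). The extra care you take with nilpotency and the invertibility of $\exp$ is appropriate and fills in what the paper leaves implicit.
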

\begin{proof}
    This follows from the identity of formal power series $\log((1+x)(1+y))=\log(1+x)+\log(1+y)$.
\end{proof}

With these preparations, we can now state the theorem of the decomposition of the diagonal due to Deninger and Murre \cite{Deninger1991}:

\begin{theorem}[Deninger, Murre]\label{decomposition}
    There is a unique decomposition of the class of the diagonal in $\text{CH}^*_{\Q}(X\times_k X)$,
    \begin{equation*}
        [\Delta_{X}]=\sum_{i=0}^{2g}\pi_i
    \end{equation*}
    such that
    \begin{equation*}
        \pi_i\circ\pi_j=
        \begin{cases}
            0 & \text{if $i\neq j$},\\
            \pi_i & \text{if $i=j$}.
        \end{cases}
    \end{equation*}
    and such that 
    \begin{equation*}
        [\Gamma_{n_X}]\circ\pi_i=n^{2g-i} \pi_i\qquad\text{for all}\: n\in\Z.
    \end{equation*}
\end{theorem}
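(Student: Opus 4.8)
The plan is to construct the projectors $\pi_i$ explicitly from the group homomorphism $x\mapsto \log([\Gamma_x])$ and the Fourier transform, following the Beauville–Deninger–Murre strategy. First I would recall that by Lemma~\ref{multiplication} and Corollary~\ref{log}, the assignment $n\mapsto [\Gamma_{n_X}]$ is ``polynomial'' in $n$ in a precise sense: writing $[\Gamma_{n_X}] = [\Gamma_e]*\cdots$ and using that $\log([\Gamma_{n_X}]) = n\cdot\log([\Gamma_{1_X}])$ (additivity in $n$), one exponentiates back to get
\begin{equation*}
    [\Gamma_{n_X}] = \exp\bigl(n\cdot\log([\Gamma_{1_X}])\bigr) = \sum_{j\geq 0}\frac{n^j}{j!}\log([\Gamma_{1_X}])^{*j},
\end{equation*}
a finite sum since $\text{CH}^*_{\Q}(X\times_k X)$ is nilpotent in the relevant filtration. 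Grouping by powers of $n$ exhibits $[\Gamma_{n_X}]$ as $\sum_j n^j \gamma_j$ for fixed classes $\gamma_j\in\text{CH}^*_\Q(X\times_k X)$, with the composition (correspondence) product behaving compatibly with the Pontryagin product under the diagonal interpretation.

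Next I would extract the projectors by a Vandermonde / Lagrange-interpolation argument applied to the operators $[\Gamma_{n_X}]\circ(-)$ acting by composition. Since $[\Gamma_{n_X}]$ is a polynomial of degree $\leq 2g$ in $n$ with coefficients in the correspondence ring, and since $[\Gamma_{m_X}]\circ[\Gamma_{n_X}] = [\Gamma_{(mn)_X}]$ (composition of endomorphisms $n_X$, $m_X$ corresponds to multiplication of integers), the classes $\gamma_j$ satisfy $\gamma_i\circ\gamma_j = \delta_{ij}\gamma_i$ after a suitable renormalization; more precisely one sets
\begin{equation*}
    \pi_i := \text{the coefficient extracting }n^{2g-i}\text{ in }[\Gamma_{n_X}]
\end{equation*}
via the formula $\pi_i = \sum_{\text{suitable }n}c_{i,n}[\Gamma_{n_X}]$ for interpolation coefficients $c_{i,n}\in\Q$ chosen so that $\sum_n c_{i,n} n^{2g-j} = \delta_{ij}$. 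Then $[\Gamma_{n_X}]\circ\pi_i = n^{2g-i}\pi_i$ is immediate from linearity, the orthogonality $\pi_i\circ\pi_j = \delta_{ij}\pi_i$ follows from $[\Gamma_{m}]\circ[\Gamma_n] = [\Gamma_{mn}]$ together with the interpolation identities, and $\sum_i \pi_i = [\Delta_X]$ follows by evaluating at $n=1$ since $[\Gamma_{1_X}] = [\Delta_X]$.

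For uniqueness I would argue that any two such decompositions $\{\pi_i\}$, $\{\pi_i'\}$ must agree: the condition $[\Gamma_{n_X}]\circ\pi_i = n^{2g-i}\pi_i$ says $\pi_i$ lies in the $n^{2g-i}$-eigenspace of the commuting family of operators $\{[\Gamma_{n_X}]\circ(-)\}_{n\in\Z}$, and since the eigenvalues $n^{2g-i}$ for $i=0,\dots,2g$ are pairwise distinct for $|n|$ large, the simultaneous eigenspaces are one-dimensional over the relevant module and the $\pi_i$ are forced to be the spectral projectors, hence unique. Dually one can also apply the same argument to $\pi_i\circ[\Gamma_{n_X}]$; the self-duality of abelian varieties and the symmetry of the construction ensure consistency. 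The main obstacle I anticipate is verifying carefully that the formal-power-series manipulations (the $\exp$/$\log$ passage) are legitimate at the level of the correspondence ring with its composition product — i.e.\ that $\log([\Gamma_{1_X}])$ is genuinely nilpotent so the sums terminate, and that composition of correspondences of the form $[\Gamma_{n_X}]$ really does implement integer multiplication — since this is where the geometry of the abelian scheme (Lemmas~\ref{push}, \ref{multiplication}, the Fourier calculus) enters essentially rather than formally.
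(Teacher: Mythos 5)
Your proposal follows the same Beauville--Deninger--Murre strategy as the paper: regard $X\times_k X$ as an abelian scheme over $X$ via $p_1$, use the $\exp$/$\log$ calculus for the Pontryagin product to write $[\Gamma_{n_X}] = \exp_*\bigl(n\cdot\log_*([\Gamma_{\mathrm{id}}])\bigr)$ as a polynomial of degree $\leq 2g$ in $n$, and define $\pi_i = \tfrac{1}{(2g-i)!}\log_*([\Gamma_{\mathrm{id}}])^{*(2g-i)}$ as the coefficient of $n^{2g-i}$. Your Lagrange-interpolation packaging of the eigenvalue and orthogonality checks via $[\Gamma_m]\circ[\Gamma_n]=[\Gamma_{mn}]$ is equivalent to the paper's argument (which instead invokes Lemma~\ref{push} to show $[\Gamma_n]\circ(-)$ is a ring homomorphism for $*$ and hence commutes with $\log_*$); both routes require exactly the same inputs (Lemmas~\ref{push}, \ref{multiplication}, Corollary~\ref{log}, and nilpotency of $\log_*([\Gamma_{\mathrm{id}}])$, which you correctly flag as the one genuinely geometric ingredient). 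One imprecision: the uniqueness argument via ``the simultaneous eigenspaces are one-dimensional'' is not correct as stated — the $n^{2g-i}$-eigenspaces of $[\Gamma_n]\circ(-)$ on $\text{CH}^*_{\Q}(X\times_k X)$ are certainly not one-dimensional in general. The cleaner argument (which the paper uses, and which your observations already contain) is pure Vandermonde: given two decompositions $\{\pi_i\}$, $\{\pi'_i\}$ satisfying the eigenvalue condition and summing to $[\Delta_X]$, one gets $\sum_{i=0}^{2g} n^{2g-i}(\pi_i - \pi'_i) = 0$ for every integer $n$, and a Vandermonde determinant forces $\pi_i = \pi'_i$ for all $i$.
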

\begin{proof}
    First we prove uniqueness. Suppose $\{ \pi'_i\}$ is another collection of elements satisfying the conditions above. Then $\sum_{i=0}^{2g} n^i(\pi_i-\pi'_i)=0$ for every integer $n$; hence
    $\pi_i=\pi'_i$ for every $i$.\\
    Now consider $X\times_k X$ as an abelian scheme over $X$ via $p_1:X\times_k X\to X$. We also consider the convolution product on $\text{CH}^*_{\Q}(X\times_k X)$ relative to the base scheme $X$.
    If $n\in\Z$, then the morphism $X\to X$ given by $x\mapsto (x,nx)$ defines a section of $X\times_k X$ over $X$. Its graph class is the class $[\Gamma_{n_X}]\in\text{CH}^g_{\Q}(X\times_k X)$, corresponding to the graph of 
    $n_X$. We denote this class simply by $[\Gamma_n]$. In particular, $[\Gamma_{id}]=[\Gamma_1]=[\Delta]$ and $[\Gamma_e]=[\Gamma_0]=[X\times e(k)]$.(Here the $e$ in $\Gamma_e$ refers to the identity section of $X\times_k X$ over $X$.)\\
    For $i\leq 2g$, define $\pi_i\in\text{CH}^*_{\Q}(X\times_k X)$ by
    \begin{equation*}
        \pi_i:=\frac{1}{(2g-i)!}\log ([\Gamma_{id}])^{*(2g-i)}=\frac{1}{(2g-i)!}(\sum_{j=1}^{\infty}\frac{(-1)^{j-1}}{j}([\Gamma_{id}]-[\Gamma_e])^{*j})^{*(2g-i)}.
    \end{equation*}
    Note that $\pi_i=0$ for $i<0$ and $\pi_{2g}=[X\times e(k)]$. Using the identity $\exp(\log (1+x))=1+x$ for the formal power series, we obtain
    \begin{equation*}
        [\Delta]=[\Gamma_{id}]=\sum^{2g}_{i=0}\pi_i.
    \end{equation*}
    By Lemma \ref{push}, we have $[\Gamma_n]\circ(\alpha *\beta)=([\Gamma_n]\circ\alpha)*([\Gamma_n]\circ\beta)$. Combining this with Lemma \ref{multiplication} and Corollary \ref{log}, we obtain
    \begin{equation*}
        \begin{split}
            [\Gamma_n]\circ\pi_i &=\frac{1}{(2g-i)!}\log ([\Gamma_n])^{*(2g-i)}\\
            &=\frac{1}{(2g-i)!}\log ([\Gamma_{id}]^{*n})^{*(2g-i)}\\
            &=\frac{1}{(2g-i)!}(n \log ([\Gamma_{id}]))^{*(2g-i)}=n^{2g-i}\pi_i.
        \end{split}
    \end{equation*}
    So we have $[\Gamma_n]=[\Gamma_n]\circ[\Delta]=[\Gamma_n]\circ\sum_{i=0}^{2g}\pi_i=\sum_{i=0}^{2g} n^{2g-i}\pi_i$. Hence $n^{2g-j}\pi_j=[\Gamma_n]\circ\pi_j=\sum_{i=0}^{2g} n^{2g-i}\pi_i\circ\pi_j$.
    ASince this identity holds for every integer $n$, it follows that
    \begin{equation*}
        \pi_i\circ\pi_j=
        \begin{cases}
            0 & \text{if $i\neq j$},\\
            \pi_i & \text{if $i=j$}.
        \end{cases}
    \end{equation*}
\end{proof}

Moreover, this decomposition also induces a corresponding decomposition at the level of motives.

\begin{theorem}
    Let $X$ be an abelian variety over $k$ of dimension $g$. Define
    \begin{equation*}
        M_{gm}^i(X):=(X,\pi_i,0),
    \end{equation*}
    with $\pi_i$ as in Theorem \ref{decomposition}. Then Theorem \ref{decomposition} yields a canonical decomposition
    \begin{equation*}
        M_{gm}(X)=\oplus_{i=0}^{2g} M_{gm}^i(X).
    \end{equation*}
\end{theorem}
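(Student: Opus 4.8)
The plan is to deduce the decomposition of $M_{gm}(X)$ directly from the idempotent decomposition of the diagonal obtained in Theorem \ref{decomposition}, using the formalism of the pseudo-abelian category of (rational) Chow motives. First I would recall that by definition $M_{gm}(X) = Chow(X) = (X,[\Delta_X],0)$ as an object of the category of rational Chow motives inside $\text{DM}(k)_{\Q}$, and that in a pseudo-abelian (Karoubian) category a decomposition of the identity endomorphism of an object into a finite sum of pairwise orthogonal idempotents induces a direct sum decomposition of that object. Concretely, if $\text{id} = e_1 + \cdots + e_m$ with $e_a e_b = \delta_{ab} e_a$, then the object is the direct sum of the images $(X,e_a)$ of the idempotents, with the inclusion of the $a$-th summand given by $e_a$ viewed as a morphism $(X,e_a)\to(X,\text{id})$ and the projection given by $e_a$ viewed as a morphism $(X,\text{id})\to(X,e_a)$; the composites reproduce the idempotents and sum to the identity, which is exactly what a biproduct decomposition requires.

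Next I would apply this with $m = 2g+1$, $e_i = \pi_i$, and the relations $\pi_i\circ\pi_j = \delta_{ij}\pi_i$ and $\sum_{i=0}^{2g}\pi_i = [\Delta_X]$ supplied verbatim by Theorem \ref{decomposition}. This immediately yields
\begin{equation*}
    M_{gm}(X) = (X,[\Delta_X],0) = \bigoplus_{i=0}^{2g} (X,\pi_i,0) = \bigoplus_{i=0}^{2g} M_{gm}^i(X)
\end{equation*}
in the category of rational Chow motives, hence in $\text{DM}(k)_{\Q}$ via the full tensor-additive embedding recalled earlier in this section. One should also observe that the $\pi_i$, a priori elements of $\text{CH}^g_{\Q}(X\times_k X)$ defined relative to the base $X$, are genuine self-correspondences of $X$ over $k$ of the correct codimension, so that $(X,\pi_i,0)$ is a well-defined object of $\text{Chow}(k)$; this is part of the content of Theorem \ref{decomposition} as stated, so no extra work is needed.

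The word ``canonical'' deserves a sentence: the decomposition is canonical because the idempotents $\pi_i$ are themselves canonical, being uniquely characterized (by the uniqueness argument in the proof of Theorem \ref{decomposition}) as the components of $[\Delta_X]$ on which $[\Gamma_{n_X}]$ acts by $n^{2g-i}$ for all $n\in\Z$; any other such family must coincide with the $\pi_i$, so the summands $M_{gm}^i(X)$ are determined independently of choices. I do not expect a genuine obstacle here: the only point requiring a modicum of care is the bookkeeping translation between ``orthogonal idempotent decomposition of $\text{id}_{M_{gm}(X)}$'' and ``direct sum decomposition of $M_{gm}(X)$'' in the pseudo-abelian envelope, and the compatibility of the eigenvalue normalization $[\Gamma_{n_X}]\circ\pi_i = n^{2g-i}\pi_i$ with the index convention $M_{gm}^i(X) = (X,\pi_i,0)$ — both of which are essentially formal once Theorem \ref{decomposition} is in hand.
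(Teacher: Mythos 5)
Your proposal is correct and matches the paper's approach: the paper simply states ``This is a direct result from Theorem \ref{decomposition},'' and your writeup supplies the standard pseudo-abelian bookkeeping (orthogonal idempotents summing to the identity give a biproduct decomposition) that justifies that one-line assertion, together with the uniqueness observation that makes the decomposition canonical.
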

\begin{proof}
    This is a direct result from Theorem \ref{decomposition}.
\end{proof}

In fact, we can get further decompositions of $M_{gm}^i(X)$. Suppose $X$ is polarized by an ample symmetric divisor $d$ and let $L_d$ be the Lefschetz operator associated with $d$(which can be regarded as multiplication with the class of $d$).
Then we have the following result due to K\"unnemann \cite{Kunnemann1993}:

\begin{theorem}[K\"unnemann]\label{Kunn}
    For $i\in\{0,...,2g\}$, the Chow motives $M_{gm}^i(X)$ has a Lefschetz decomposition
    \begin{equation*}
        M_{gm}^i(X)=\bigoplus^{\lfloor\frac{i}{2}\rfloor}_{k=max\{0,i-g\}}L_d^k P^{i-k}(X)
    \end{equation*}
    in $\text{Chow}(k)$, such that for all $k\in\{0,...,g-i-1\}$ the morphism $L_d$ induces isomorphisms $L_d:L_d^k P^i(X)\to L_d^{k+1}P^i(X)(1)$. Furthermore, $L_d$ induces the zero morphism on $L_d^{g-i}P^i(X)$.
\end{theorem}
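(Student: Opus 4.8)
The plan is to lift the classical Lefschetz decomposition to $\text{Chow}(k)$, following K\"unnemann: one builds an explicit $\mathfrak{sl}_2$-action on the total motive $M_{gm}(X)=\bigoplus_{i=0}^{2g}M_{gm}^i(X)$ and then reads off the decomposition, the Lefschetz strings, and the hard-Lefschetz isomorphisms from $\mathfrak{sl}_2$-representation theory. The three operators are the Lefschetz operator $L_d$ (cup product with $d$), a dual operator $\Lambda_d$ manufactured from the Fourier transform, and the grading operator $H$ acting on $M_{gm}^i(X)$ by $(i-g)\cdot\text{id}$. Because $\text{Chow}(k)$ is $\mathbb{Q}$-linear and idempotent complete, any idempotent correspondence built out of $L_d,\Lambda_d,H$ with rational coefficients cuts out an honest summand, so the formal part of the argument will go through without obstruction.

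First I would realize $L_d$ as the correspondence $[\Delta_X]\cdot p_2^{*}d\in\text{CH}^{g+1}_{\mathbb{Q}}(X\times X)$, which acts on $M_{gm}(X)$ as cup product with $d$, and check that it is compatible with the Chow--K\"unneth projectors of Theorem \ref{decomposition}, namely $\pi_{i+2}\circ L_d\circ\pi_i=L_d\circ\pi_i$, so that $L_d$ is a morphism $M_{gm}^i(X)\to M_{gm}^{i+2}(X)(1)$. This compatibility is forced by the eigenproperty $[\Gamma_{n_X}]\circ\pi_i=n^{2g-i}\pi_i$ together with $n_X^{*}d=n^{2}d$ --- and it is here that the symmetry of $d$ enters --- since moving $L_d$ past $[\Gamma_{n_X}]$ introduces precisely a factor $n^{2}$, which pins down the bidegree shift and the Tate twist.

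Next I would construct $\Lambda_d$. The Poincar\'e bundle $\mathcal{P}$ gives, at the level of motives, an isomorphism $M_{gm}^i(X)\xrightarrow{\ \sim\ }M_{gm}^{2g-i}(X^t)(g-i)$: the Fourier transform $\tau_{\text{CH}}$ exchanges the Pontryagin and intersection products and satisfies $\tau^t_{\text{CH}}\circ\tau_{\text{CH}}=(-1)^g(-1_X)^{*}$, which is an automorphism, so $\tau_{\text{CH}}$ is invertible on motives. Composing with the polarization $\lambda_d\colon X\to X^t$ attached to the ample symmetric $d$ and with the Lefschetz operator of a dual polarization on $X^t$, one obtains --- after inserting the correct normalizing constants --- a morphism $\Lambda_d\colon M_{gm}^i(X)\to M_{gm}^{i-2}(X)(-1)$. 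The relations $[H,L_d]=2L_d$ and $[H,\Lambda_d]=-2\Lambda_d$ are then immediate from the bidegree shifts. The essential relation $[L_d,\Lambda_d]=H$ is the heart of the matter, and \emph{this is the main obstacle}: it is not a formal consequence but a genuine computation in $\text{CH}^{*}_{\mathbb{Q}}$, carried out by expanding $L_d\Lambda_d-\Lambda_d L_d$ through the definition of $\Lambda_d$ and the Fourier identities $\tau_{\text{CH}}([\Gamma_x])=\exp(i_x^{*}l)$, $\tau_{\text{CH}}(x\cdot y)=(-1)^g\tau_{\text{CH}}(x)*\tau_{\text{CH}}(y)$, $\tau_{\text{CH}}(x*y)=\tau_{\text{CH}}(x)\cdot\tau_{\text{CH}}(y)$, crucially using the symmetry of $d$. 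In spirit this is the hard Lefschetz theorem for the motive, namely that $L_d^{\,g-i}\colon M_{gm}^i(X)\to M_{gm}^{2g-i}(X)(g-i)$ is an isomorphism, with the Fourier transform furnishing the explicit inverse; the bulk of the effort is bookkeeping of signs, factorials, and Tate twists.

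Finally, with the $\mathfrak{sl}_2$-action in hand the conclusion is purely formal. Since $M_{gm}(X)$ is concentrated in the $2g+1$ pieces $M_{gm}^0(X),\dots,M_{gm}^{2g}(X)$, every $\mathfrak{sl}_2$-string has length at most $2g+1$, and the standard universal polynomial formulas (with $\mathbb{Q}$-coefficients) for the projection onto lowest-weight vectors give idempotent correspondences whose images are the primitive motives $P^j(X)\subset M_{gm}^j(X)$ for $0\le j\le g$. The same formulas then produce the Lefschetz decomposition of each $M_{gm}^i(X)$ into its summands $L_d^{k}P^{i-2k}(X)$ with $k$ in the stated range --- for $i\le g$ from $0$ to $\lfloor i/2\rfloor$, and for $i>g$ from $i-g$ to $\lfloor i/2\rfloor$, the lower bound arising because one obtains the decomposition by transporting the already-known decomposition of $M_{gm}^{2g-i}(X)$ along the hard Lefschetz isomorphism --- together with the strings $P^i(X)\xrightarrow{\,L_d\,}L_dP^i(X)\xrightarrow{\,L_d\,}\cdots\xrightarrow{\,L_d\,}L_d^{\,g-i}P^i(X)\xrightarrow{\,L_d\,}0$, in which every arrow but the last is an isomorphism up to a Tate twist, and the vanishing of $L_d$ on the top term $L_d^{\,g-i}P^i(X)$. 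All of this takes place inside $\text{Chow}(k)$, which is all that is required.
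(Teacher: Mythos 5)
The paper does not prove this theorem; it simply cites K\"unnemann \cite{Kunnemann1993}. Your proposal is a faithful outline of K\"unnemann's actual argument: realize $L_d$ as a correspondence compatible with the Chow--K\"unneth projectors of Theorem \ref{decomposition} (using symmetry of $d$ and the eigenproperty $[\Gamma_{n_X}]\circ\pi_i=n^{2g-i}\pi_i$), manufacture the lowering operator $\Lambda_d$ from the Fourier transform and the polarization, verify the $\mathfrak{sl}_2$-relations with $[L_d,\Lambda_d]=H$ as the crucial one, and then read off the Lefschetz decomposition, the strings, and hard Lefschetz from representation theory in a $\Q$-linear idempotent-complete category. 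You also correctly write the summands as $L_d^{k}P^{i-2k}(X)$ — the paper's display has $P^{i-k}$, which must be a typo since only $i-2k$ is consistent with $L_d$ raising degree by two, with the subsequent hard Lefschetz theorem, and with the ranges $\max\{0,i-g\}\le k\le\lfloor i/2\rfloor$.

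The one substantive gap, which you flag yourself, is that $[L_d,\Lambda_d]=H$ is asserted as "a genuine computation" but never carried out; this is not a detail but the entire content of the theorem, and it is exactly where K\"unnemann spends most of his effort (a careful Fourier-transform calculation keeping track of Tate twists, signs and factorials, together with Beauville's $\star$-decomposition and the properties of $\exp$/$\log$ of the Poincar\'e class). Everything downstream of that relation is, as you say, purely formal $\mathfrak{sl}_2$ bookkeeping. So the structure of your argument is right and matches the cited source, but as written it is a sketch of the proof rather than the proof itself: to make it complete one would need to fill in the explicit correspondence representing $\Lambda_d$ and verify the commutator identity at the level of cycles.
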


Here $L_d^k P^i(X)$ is a direct factor of $M^{i+2k}_{gm}(X)$. In particular, $P^i(X)=L_d^0 P^i(X)$ denotes the primitive part of $M^i_{gm}(X)$. These $P^i(X)$'s are determined recursively by $M^i_{gm}(X)$. For example,
one can check from the above theorem that $P^0(X)=M^0_{gm}(X)$ and $P^1(X)=M^1_{gm}(X)$. In fact, everything in the decomposition is only determined by $M^1_{gm}(X)$, this is by the recursive construction of $P^i(X)$ \cite[Theorem 4.1]{Kunnemann1993}. 
This also reflects the fact that the Weil cohomology ring of $X$ over any characteristic 0 field is determined by the first cohomology group and we can also lift this to Chow motives.

From Theorem \ref{Kunn} we also have 

\begin{theorem}[hard Lefschetz]
    For $i\in\{0,...,g\}$,
    \begin{equation*}
        L^{g-i}_d: M_{gm}^i(X)\stackrel{\sim}{\to} M_{gm}^{2g-i}(X)(g-i)
    \end{equation*}
    is an isomorphism in $\text{Chow}(k)$.
\end{theorem}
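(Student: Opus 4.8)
The plan is to deduce hard Lefschetz as a purely formal consequence of the Lefschetz decomposition of Theorem \ref{Kunn}, by bookkeeping the $L_d$-degree of each primitive summand. First I would fix $i\in\{0,\dots,g\}$ and write out both relevant decompositions. Since $i\le g$ we have $\max\{0,i-g\}=0$, so Theorem \ref{Kunn} gives $M_{gm}^i(X)=\bigoplus_{k=0}^{\lfloor i/2\rfloor} L_d^k P^{i-2k}(X)$; for the complementary degree $2g-i\ge g$ one has $\max\{0,(2g-i)-g\}=g-i$ and $\lfloor (2g-i)/2\rfloor=g-\lceil i/2\rceil$, so $M_{gm}^{2g-i}(X)=\bigoplus_{k'=g-i}^{g-\lceil i/2\rceil} L_d^{k'} P^{2g-i-2k'}(X)$. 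Since $L_d$ is a morphism $M_{gm}^n(X)\to M_{gm}^{n+2}(X)(1)$, the composite $L_d^{g-i}$ is a morphism $M_{gm}^i(X)\to M_{gm}^{2g-i}(X)(g-i)$ carrying the summand $L_d^k P^{i-2k}(X)$ into $L_d^{k+g-i}P^{i-2k}(X)(g-i)$.

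The next step is to see that $L_d^{g-i}$ matches the two decompositions summand-by-summand. Under the shift $k\mapsto k':=k+g-i$ the range $0\le k\le\lfloor i/2\rfloor$ is carried bijectively onto $g-i\le k'\le g-i+\lfloor i/2\rfloor=g-\lceil i/2\rceil$, which is exactly the indexing range occurring in $M_{gm}^{2g-i}(X)$; and the primitive parts agree because $2g-i-2k'=2g-i-2(k+g-i)=i-2k$. So it remains to check that on each summand the induced map $L_d^{g-i}:L_d^k P^{i-2k}(X)\to L_d^{k+g-i}P^{i-2k}(X)(g-i)$ is an isomorphism. Setting $j=i-2k$, Theorem \ref{Kunn} asserts that $L_d:L_d^m P^j(X)\to L_d^{m+1}P^j(X)(1)$ is an isomorphism for every $m\in\{0,\dots,g-j-1\}$. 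Iterating this $g-i$ times, from $m=k$ up to $m=k+g-i$, is legitimate since the intermediate indices $k,k+1,\dots,k+g-i-1$ all lie in $\{0,\dots,g-j-1\}=\{0,\dots,g-i+2k-1\}$: the lower bound is clear and the upper bound $k+g-i-1\le g-i+2k-1$ reduces to $0\le k$. Composing these $g-i$ isomorphisms yields the isomorphism on the summand.

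Finally, assembling over all $k$, the map $L_d^{g-i}$ is a direct sum of isomorphisms between the primitive summands of $M_{gm}^i(X)$ and those of $M_{gm}^{2g-i}(X)(g-i)$ along a bijection of the two indexing sets, hence is itself an isomorphism $M_{gm}^i(X)\xrightarrow{\sim} M_{gm}^{2g-i}(X)(g-i)$ in $\text{Chow}(k)$; the case $i=g$ is the tautology $\mathrm{id}:M_{gm}^g(X)\to M_{gm}^g(X)$. The only thing requiring care is the index bookkeeping — verifying that shifting by $g-i$ carries the Lefschetz decomposition of $M_{gm}^i(X)$ bijectively onto that of $M_{gm}^{2g-i}(X)(g-i)$, and that the iterated application of $L_d$ never leaves the range in which Theorem \ref{Kunn} guarantees an isomorphism — everything of genuine depth being already packaged in K\"unnemann's theorem.
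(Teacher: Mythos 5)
Your proof is correct and takes exactly the route the paper intends: the paper simply states that hard Lefschetz follows from Theorem \ref{Kunn} without writing out the argument, and your index bookkeeping (matching $k\mapsto k'=k+g-i$ between the two Lefschetz decompositions and iterating the isomorphism $L_d:L_d^mP^j(X)\to L_d^{m+1}P^j(X)(1)$ over the range $m\in\{k,\dots,k+g-i-1\}\subset\{0,\dots,g-j-1\}$) is precisely the verification that has been left implicit.
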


Combine the above theorems together, we have the following decomposition:

\begin{theorem}\label{Kunn1}
    Let $X$ be an abelian variety over $k$ of dimension $g$. Then 
    \begin{equation*}
        M_{gm}(X)=\bigoplus_{i=0}^{g-1}\bigoplus_{k=0}^{\lfloor\frac{i}{2}\rfloor} (P^{i-2k}(X)(-k)\oplus P^{i-2k}(X)(-(k+g-i)) )\oplus\bigoplus_{k=0}^{\lfloor\frac{g}{2}\rfloor} P^{g-2k}(X)(-k)
    \end{equation*}
    in $\text{Chow}(k)$.
\end{theorem}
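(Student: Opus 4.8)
The plan is to assemble the three decomposition results already in hand---the Deninger--Murre decomposition (Theorem \ref{decomposition} and its motivic corollary), the Lefschetz decomposition of K\"unnemann (Theorem \ref{Kunn}), and hard Lefschetz---into a single formula; the only real content is careful bookkeeping of weights and Tate twists.

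First I would start from the canonical decomposition $M_{gm}(X)=\bigoplus_{i=0}^{2g}M_{gm}^i(X)$ and split the index set into the three ranges $0\le i\le g-1$, $i=g$, and $g+1\le i\le 2g$. Re-indexing the top range by $i\mapsto 2g-i$ turns $\bigoplus_{i=g+1}^{2g}M_{gm}^i(X)$ into $\bigoplus_{i=0}^{g-1}M_{gm}^{2g-i}(X)$, so that
\[
    M_{gm}(X)\cong\bigoplus_{i=0}^{g-1}\left(M_{gm}^i(X)\oplus M_{gm}^{2g-i}(X)\right)\oplus M_{gm}^g(X).
\]

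Next I would use hard Lefschetz to express the ``upper half'' in terms of the ``lower half'': for $0\le i\le g-1$ the isomorphism $L^{g-i}_d\colon M_{gm}^i(X)\xrightarrow{\sim}M_{gm}^{2g-i}(X)(g-i)$ gives $M_{gm}^{2g-i}(X)\cong M_{gm}^i(X)(-(g-i))$. It then remains to expand $M_{gm}^i(X)$ for $0\le i\le g$. Since $i\le g$, the lower summation bound $\max\{0,i-g\}$ in Theorem \ref{Kunn} is $0$, so $M_{gm}^i(X)\cong\bigoplus_{k=0}^{\lfloor i/2\rfloor}L_d^kP^{i-2k}(X)$, and the iterated isomorphisms $L_d\colon L_d^kP^j(X)\xrightarrow{\sim}L_d^{k+1}P^j(X)(1)$ from the same theorem identify $L_d^kP^j(X)\cong P^j(X)(-k)$. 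Substituting, $M_{gm}^i(X)\cong\bigoplus_{k=0}^{\lfloor i/2\rfloor}P^{i-2k}(X)(-k)$ and hence $M_{gm}^{2g-i}(X)\cong\bigoplus_{k=0}^{\lfloor i/2\rfloor}P^{i-2k}(X)(-(k+g-i))$ for $0\le i\le g-1$, while $M_{gm}^g(X)\cong\bigoplus_{k=0}^{\lfloor g/2\rfloor}P^{g-2k}(X)(-k)$. Plugging these into the three-term splitting above produces precisely the claimed decomposition in $\text{Chow}(k)$.

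There is no genuine obstacle: the statement is a formal consequence of results already proved. The points that call for care are (i) checking that the range of $k$ in K\"unnemann's decomposition collapses to $0\le k\le\lfloor i/2\rfloor$ for the weights $i\le g$ that are actually used; (ii) tracking the Tate twist contributed both by iterating $L_d$ and by the hard Lefschetz isomorphism, so that the two copies of $P^{i-2k}(X)$ sitting over weights $i$ and $2g-i$ emerge with twists $(-k)$ and $(-(k+g-i))$ respectively; and (iii) noting that all of this takes place inside $\text{Chow}(k)$ (equivalently, after the embedding recalled above, inside $\text{DM}_{gm}(k)$), which is immediate since each input theorem is stated there. This decomposition is exactly what will feed, via the equivalence $\text{SH}^+(k)\simeq\text{DM}(k)_{\Q}$, into the plus part of the main splitting.
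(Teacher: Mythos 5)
Your proof is correct and takes exactly the approach the paper intends: the paper gives no explicit argument for Theorem~\ref{Kunn1} beyond ``combine the above theorems together,'' and your account spells out precisely that combination---Deninger--Murre, then re-indexing via hard Lefschetz, then K\"unnemann's decomposition with the observation that $\max\{0,i-g\}=0$ for $i\le g$ and that iterating $L_d$ contributes the twist $(-k)$. The bookkeeping of twists (in particular $M_{gm}^{2g-i}(X)\cong M_{gm}^i(X)(-(g-i))$ yielding the extra $(g-i)$ in the second summand) is carried out correctly.
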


We will see later that this decomposition will give us exactly the splitting in the plus part.

\begin{remark}\label{projectivespace}
    Another example of the decomposition of the Chow motives is the case when $X=\mathbb{P}^m$. One can see that $\rho_i=[\mathbb{P}^{m-i}\times \mathbb{P}^i]$ are mutually orthogonal
    projectors on $\mathbb{P}^m$ and $\sum_{i=0}^m\rho_i=[\Delta]$ and $(\mathbb{P}^m,\rho_i)\cong T(i)$. Thus, $Chow(\mathbb{P}^m)=\bigoplus_{i=0}^m T(i)$. This means that $\mathbb{P}^m_+$ will
    split into pieces in $\text{SH}(k)_{\Q}$. Compared with Remark \ref{projective}, the fact that in $\text{SH}(k)^{+}$ the stablization of algebraic Hopf map $\eta=0$ makes a difference.
\end{remark}

\begin{remark}\label{conservative}
    It is also not hard to see that in Theorem \ref{decomposition}, to get all of the idempotents, we only need $(2g)!$ is invertible. And this is also required for Theorem \ref{Kunn}. So if $\text{dim}(X)=g$, we have the decomposition in $\text{DM}(k)_{\Lambda}$, where $(2g)!\in\Lambda^\times$. Meanwhile, when $\Lambda=\Q$, we have $\text{SH}(k)_{\Q}^+\simeq\text{DM}(k)_{\Q}$, so the decomposition of motives will directly tell us the splitting in the plus part. In fact the components of the splitting in the plus part 
    is given by the components of product of curves up to smashing with motivic spheres. We will explain this in Section \ref{final}. 
\end{remark}

\section{Topology of real points}\label{topologyofrealpoints}
In this section, we will study the real points of a real abelian variety $X$. In particular we want to know the number of connected components of $X(\R)$ so that we can describe the splitting of $X(\R)_+$ in $\text{SH}$ concretely. And if 
we know more information of the endomorphism ring of $X$, we will get a explicit formula of this number $n(X)$. We will see how to calculate it by examples. We will follow \cite{ASENS} and \cite{Huisman1994}.

Let $X$ be an abelian variety of dimension $g$ over $\R$ with a rational point. Let $X(\R)^0$ denote the connected component
of the identity in the group $X(\R)$ of real points.

\begin{proposition}\cite{ASENS}\label{realabelian1}
    (i)$X(\R)^0$ is a real torus of dimension $g$.\\
    (ii)$X(\R)/X(\R)^0$ is an elementary abelian 2-group.\\
    (iii)$X(\R)\simeq (\R/\Z)^g\times(\Z/2)^d$ with $0\leq d\leq g$.
\end{proposition}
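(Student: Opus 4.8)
The plan is to reduce the whole statement to the single fact that $X(\R)$ is a compact commutative Lie group of dimension $g$, and then analyze its group of connected components via the exponential map and a count of real $2$-torsion. First I would observe that $X$ is a smooth projective variety over $\R$, so $X(\R)$, equipped with the strong (Euclidean) topology, is a compact smooth manifold, and the group structure on $X$ makes $X(\R)$ a compact commutative real Lie group of dimension equal to $\dim X = g$. The identity component $X(\R)^0$ is then an open subgroup, hence also closed, so $X(\R)/X(\R)^0$ is a discrete (thus finite, by compactness) abelian group; this already gives the shape of the argument for all three parts.

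For part (i), I would invoke the structure theory of connected compact abelian Lie groups: any such group is a torus, i.e.\ isomorphic to $(\R/\Z)^m$ for some $m$, and $m$ equals the dimension of the group, which is $g$. Concretely one uses that the Lie algebra $\mathfrak{g} = \R^g$ and the exponential map $\exp:\R^g \to X(\R)^0$ is a surjective homomorphism with kernel a full-rank lattice $\Lambda \subset \R^g$ (full rank because the image is compact), whence $X(\R)^0 \cong \R^g/\Lambda \cong (\R/\Z)^g$. For part (ii), the key input is that complex conjugation acts on $X(\C) \cong \C^g/\Lambda_{\C}$ as an antiholomorphic involution $\sigma$, and $X(\R) = X(\C)^{\sigma}$ is the fixed locus; the component group of the fixed locus of a real structure on a complex torus is killed by $2$. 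The cleanest way I would argue this: every element $x \in X(\R)$ satisfies $2x \in X(\R)^0$, because $X(\R)^0$ is divisible (being a torus) and one can solve $2y = 2x$ within $X(\R)^0$ after adjusting — more precisely, the multiplication-by-$2$ map on the torus $X(\R)^0$ is surjective, and one shows $2x \in X(\R)^0$ by a connectedness/path argument (the point $tx$ traces a path, or one uses that $X(\R)/X(\R)^0$ injects into $X(\C)/X(\C)^0 = X(\C)/X(\C)$, trivial, so instead uses the conjugation description directly). So $X(\R)/X(\R)^0$ is a finite abelian group of exponent dividing $2$, i.e.\ an elementary abelian $2$-group $(\Z/2)^d$.

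For part (iii), combining (i) and (ii) I would show the extension $0 \to X(\R)^0 \to X(\R) \to (\Z/2)^d \to 0$ splits, so $X(\R) \cong (\R/\Z)^g \times (\Z/2)^d$. Splitting follows because $X(\R)^0 = (\R/\Z)^g$ is a divisible abelian group, hence an injective $\Z$-module, so every extension of it by any abelian group splits. Finally, the bound $0 \le d \le g$: the lower bound is trivial, and the upper bound $d \le g$ follows by counting $2$-torsion — $X(\R)[2] \subseteq X(\C)[2] \cong (\Z/2)^{2g}$, but on the other hand $X(\R)[2] \cong (\R/\Z)^g[2] \times (\Z/2)^d = (\Z/2)^g \times (\Z/2)^d = (\Z/2)^{g+d}$, so $g + d \le 2g$, giving $d \le g$. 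The main obstacle, and the step deserving the most care, is part (ii): showing that every component squares into the identity component. The slick proofs all go through the complex-analytic model $X(\C) = V/\Lambda$ with its antiholomorphic involution and the observation that the real structure's component group is $2$-torsion (equivalently, $H^1$ of $\Z/2$ with suitable coefficients is $2$-torsion); alternatively one cites Proposition 1.2 of \cite{ASENS} directly. I would present the conjugation-fixed-point argument, since it also sets up the later group-cohomology computation of $n(X) = 2^d$ that the paper needs.
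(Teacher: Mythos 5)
Parts (i) and (iii) of your proposal match the paper exactly: (i) is the classification of connected compact abelian Lie groups, and (iii) combines divisibility (hence injectivity) of $(\R/\Z)^g$ to split the extension with the $2$-torsion count $(\Z/2)^{g+d}\cong X(\R)[2]\subset X(\C)[2]\cong(\Z/2)^{2g}$ to bound $d\le g$.

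The gap is precisely where you flag it: part (ii). Your proposed elementary arguments do not go through as stated. The assertion that ``$tx$ traces a path'' is meaningless on an abstract abelian group (there is no scalar multiplication by $t\in[0,1]$), and the suggested injection $X(\R)/X(\R)^0\hookrightarrow X(\C)/X(\C)^0$ is vacuous since $X(\C)$ is connected --- as you yourself notice before falling back to ``cite \cite{ASENS}'' or ``use the complex-analytic model and $H^1$.'' Surjectivity of multiplication by $2$ on the torus $X(\R)^0$ only gives $2X(\R)^0=X(\R)^0$; it does not by itself tell you that $2x\in X(\R)^0$ for $x$ lying in another component, which is the actual content of (ii). The paper's proof supplies the missing idea cleanly via the \emph{norm map} $\mathbb{N}:X(\C)\to X(\R)$, $\mathbb{N}(P)=P+\bar P$. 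Its image is a closed connected subgroup of $X(\R)$ (continuous image of the connected compact group $X(\C)$); it contains $2X(\R)$ (restrict $\mathbb{N}$ to $X(\R)$), hence contains $X(\R)^0$ by divisibility of the torus, hence has finite index and is open, and therefore equals $X(\R)^0$. The containment $2X(\R)\subseteq\mathbb{N}X(\C)=X(\R)^0$ then says the component group is killed by $2$. Your gesture toward the uniformization $X(\C)=V/\Lambda$ and $H^1(\mathrm{Gal}(\C/\R),\Lambda)$ being $2$-torsion is a legitimate alternative route --- it is in fact the mechanism the paper uses later in the proof of Theorem~\ref{components1} --- but if you take that route for this proposition you should actually write down the long exact sequence in $G$-cohomology and identify $X(\R)/X(\R)^0$ with $H^1(G,\Lambda)$, rather than leaving it as a pointer.
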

\begin{proof}
    (i)\: Since $X(\R)^0$ is a connected, compact, abelian real Lie group of dimension $g$, it must be isomorphic to the torus $(\R/\Z)^g$.\\
    (ii)\: Consider the map $\mathbb{N}:X(\C)\to X(\R)$ defined by $\mathbb{N}(P)=P+\bar{P}$, where $\bar{P}$ denotes complex conjugation. Since $\mathbb{N}$ is a continuous homomorphism and 
          $X(\C)$ is compact and connected, the image $\mathbb{N}X(\C)$ is a closed connected subgroup of $X(\R)$. Moreover, since it contains $2X(\R)$, it must have finite index and is also open. Consequently, $\mathbb{N}X(\C)=X(\R)^0$, and the quotient is annihilated by 2.\\
    (iii)\: Since $X(\R)^0$ is a divisible group, the exact sequence
          \begin{equation*}
              0\to X(\R)^0\to X(\R)\to X(\R)/X(\R)^0\to 0  
          \end{equation*}
          splits. Hence, $X(\R)\cong (\R/\Z)^g\times (\Z/2)^d$. The bound on $d$ follows from a count of the 2-torsion points: $(\Z/2)^{g+d}\cong X(\R)_2\subset X(\C)_2\cong (\Z/2)^{2g}$. 
\end{proof}

Let $n(X)=\text{Card}(X(\R)/X(\R)^0)$ be the number of connected components of $X(\R)$. Then $n(X)=2^d$, using the above notation.

We aim to study the relationship between the number of connected components of $X(\R)$ and arithmetical properties of $\text{End}(X_{\C})$. To simplify the problem, we focus on abelian varieties
$X$ over $\R$ satisfying the following three conditions:\\
(i)\:$X$ is absolutely simple, i.e., $X_\C=X\otimes\C$ contains no nontrivial complex abelian subvarieties.\\
(ii)\:$X$ admits sufficiently many complex multiplication (see \cite{OORT1973399}), i.e., the ring of endomorphisms $\text{End}(X_\C)$ of 
      $X_\C$ has rank $2\text{dim}(X)$.\\
(iii)\:$\text{End}(X_{\C})$ is a Dedekind domain.

Let $B$ be a ring and $A$ a subring of $B$. Denote by $\mathcal{A}_{\R}(B/A)$ the set of isomorphism classes of absolutely
simple abelian varieties $X$ over $\R$ that admit sufficiently many complex multiplications such that
\begin{equation*}
    \text{End}(X)\cong A,\text{End}(X_\C)\cong B.
\end{equation*}

\begin{remark}\label{fact}
    The set $\mathcal{A}_{\R}(B/A)$ is nonempty if and only if the following three conditions hold:\\
    (i)\:$B$ is a commutative domain and finitely generated as a $\Z$-module.\\
    (ii)\:The field of fractions $L$ of $B$ is a totally imaginary extension of a totally real field $K$, 
          and the field extension $L/K$ has degree 2.\\
    (iii)\:$A=B\cap K$.
\end{remark}

From these conditions, it follows that $X\in\mathcal{A}_{\R}(B/A)$ if and only if $X$ is an absolutely simple abelian variety over $\R$ that 
admits sufficiently many complex multiplications and satisfies $\text{End}(X_{\C})\cong B$. Noreover, such an isomorphism automatically 
induces an isomorphism $\text{End}(X)\cong A$.

Suppose the rings $A$ and $B$ satisfy all the conditions in Remark \ref{fact}. Then $L/K$ is a Galois extension. Let $G$ denote its Galois group, and let $\sigma\in G$ denote its nontrivial element. Furthermore, let $B(G)$ be the smallest subring of $\text{End}_A(B)$,
the ring of $A$-linear endomorphisms of $B$, that contains $B$ as well as $G$.

Then, one can construct abelian variety $X\in\mathcal{A}_{\R}(B/A)$ as follows: Choose a morphism of $\R$-algebras
\begin{equation*}
    \Phi:\C\to\R\otimes B,
\end{equation*}
which does not factor through $\R\otimes B'\to\R\otimes B$ for any proper subring $B'$ of $B$. Such a morphism $\Phi$ is called a \emph{simple complex structure}
on $\R\otimes B$. Next, choose a $B(G)$-module $M$ that is projective of rank 1 as a $B$-module. Define
\begin{equation*}
    V=\R\otimes M=(\R\otimes B)\otimes_B M,
\end{equation*}
which is a complex vector space via $\Phi$ and contains $\Lambda=1\otimes M$ as a lattice. The group $G$ acts on $V$,
where the action of $\sigma$ is anti-$\C$-linear, and $\Lambda$ is $G$-invariant. It is a standard result that there exists an absolutely simple abelian 
variety over $\R$ and a $G$-equivariant isomorphism of complex Lie groups
\begin{equation*}
    X(\C)\to V/\Lambda.
\end{equation*}
The variety $X$ admits sufficiently many complex multiplications and satisfies that $\text{End}(X_\C)\cong B$, hence $X\in\mathcal{A}_{\R}(B/A)$.
Since the isomorphism class of $X$ is uniquely determined by $\Phi$ and $M$, we denote this variety $X$ by $X_{\R}(M,\Phi)$.

Conversely, if $X\in\mathcal{A}_{\R}(B/A)$, then there exists a simple complex structure $\Phi$ on $\R\otimes B$ and a $B(G)$-module $M$, projective 
of rank 1 over $B$, such that
\begin{equation*}
    X_{\R}(M,\Phi)\cong X.
\end{equation*}
More details can be found in \cite{Huisman1992RealAV}.

For technical reasons, and in accordance of Remark \ref{fact}, we will assume from now on that the rings $A$ and $B$ satisfy the conditions in Remark \ref{fact} and $B$ and $A$ are Dedekind domains.

We further introduce the following concept. Let $M$ be a $B(G)$-module. Then the \emph{ramification module} of $M$, denoted as $\mathfrak{E}(M)$, is defined as the cokernel of the canonical mapping of $B$-modules
\begin{equation*}
    B\otimes_A(M^G)\to M,
\end{equation*}
where $M^G=\{m\in M|\sigma m=m\}$. Assume $M$ to be a projective 
of rank 1 as a $B$-module. Then:
\begin{equation*}
    \mathfrak{E}(M)\cong\bigoplus^n_{i=1} B/\PP_i^{\epsilon_i},
\end{equation*}
for some $\epsilon_i\in\{0,1\}$. We may assume $M$ is a $B(G)$-module of $B$. Then $M=\prod\PP^{e_{\PP}}$, where the product is taken over all nonzero prime ideals of $B$ and where all but
finitely many of the integers $e_{\PP}$ are zero. Since $\sigma M=M$, there exists an ideal $\mathfrak{a}$ of $A$ such that $M=\mathfrak{a}\cdot\prod_{i=1}^{n}\PP_i^{e_i}$, where $e_i=e_{\PP_i}$.
Thus, $M^G=\mathfrak{a}\cdot\prod_{i=1}^n\p^{d_i}_i$, where $d_i=\lfloor\frac{e_i+1}{2}\rfloor$. Hence, $B\otimes_A(M^G)=\mathfrak{a}\cdot\prod^n_{i=1}B\p^{d_i}_i=\mathfrak{a}\cdot\prod^n_{i=1}\PP^{2d_i}_i$.
We may then set $\epsilon_i=2d_i-e_i$, which gives the ramification module of $M$.

Now, suppose $M$ is a $B(G)$-module that is projective of rank 1 as a $B$-module. By construction, $\mathfrak{E}(M)=0$ if and only if there exists an $A$-module $N$ such that
$B\otimes_A N\cong M$ as $B(G)$-modules.

For any $B$-module of finite length $M$, $\chi_B(M)$ is the ideal of $B$ determined by the following properties \cite{Serre1979}:\\
(i)\:$\chi_B$ is multiplicative with respect to short exact sequences of finite-length $B$-modules;\\
(ii)\:$\chi_B(B/\mathfrak{b})=\mathfrak{b}$ for any nonzero ideal $\mathfrak{b}\subset B$.\\

Let $\D$ denote the discriminant of $B$ over $A$. A nonzero prime ideal $\p$ of $A$ ramifies in $B$ if and only if $\p|\D$.

It will be convenient to enumerate the set $S$ of nonzero prime ideals $\{\p_1,...,\p_n\}$ of $A$ dividing the discriminant
$\D$ of $B$ over $A$ in such a way that
\begin{equation*}
    \{\p_1,...,\p_m\}=\{\p\in S|\:\p\:\text{divides}\:(2)\}
\end{equation*}
and
\begin{equation*}
    \{\p_1,...,\p_l\}=\{\p\in S|\:\p\:\text{divides}\:(2)\:\text{and}\:\text{ord}_{\p}(\D)\:\text{is}\:\text{even}\},
\end{equation*}
where $0\leq l\leq m$ and $\text{ord}_{\p}(\D)$ is the greatest integer $i$ such that $\p^i|\D$. Let $\PP_i$ be the unique prime 
ideal of $B$ lying over $\p_i$. Then $\PP_i^2=B\p_i$, for $i=1,...,n$.

\begin{theorem}\cite{Huisman1994}\label{components1}
    Let $X\in\mathcal{A}_{\R}(B/A)$, i.e., $X$ is an absolutely simple abelian variety over $\R$,
    admitting sufficiently many complex multiplications with $\text{End}(X)\cong A$ and $\text{End}(X_{\C})\cong B$.
    Then, the number of connected components of $X(\R)$ is equal to
    \begin{equation*}
        \prod^{m}_{i=1} 2^{a_i f_i} \bigg/ \prod^{l}_{i=1} 2^{\epsilon_i f_i},
    \end{equation*}
    where $a_i=\lfloor \frac{ord_{\p_i}(\D)}{2}\rfloor, f_i=[k(\p_i):\F_2]$ and $\epsilon_i=ord_{{\PP}_i}(\chi_{B}(\mathfrak{E}(M)))$,
    for $i=1,...,l$, where $k(\p_i)$ is the residue field $A/\p_i$, and $M$ is a $B(G)$-module, projective of rank 1 as a $B$-module, such 
    that
    \begin{equation*}
        X\cong X_{\R}(M,\Phi),
    \end{equation*}
    for some simple complex structure $\Phi$ on $\R\otimes B$.
\end{theorem}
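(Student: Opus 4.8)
The plan is to reduce the statement to an elementary index computation for fractional ideals. I would start from the analytic uniformization $X(\C)\cong V/\Lambda$ with $V=\R\otimes M$ and $\Lambda=1\otimes M$, on which $G=\text{Gal}(L/K)=\langle\sigma\rangle\cong\Z/2$ acts through its action on the $B(G)$-module $M$; for a count of connected components only the real data $(V,\Lambda,\sigma)$ matter, not the complex structure $\Phi$. Under this uniformization complex conjugation on $X(\C)$ is the action of $\sigma$, so $X(\R)=\ker(1-\sigma\mid V/\Lambda)$, while the argument in the proof of Proposition \ref{realabelian1}(ii) identifies $X(\R)^{0}=\mathbb{N}X(\C)$ with $\text{im}(1+\sigma\mid V/\Lambda)$. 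Hence $n(X)=|\hat H^{0}(G,V/\Lambda)|$, the Tate cohomology of the cyclic group $G$. Since $V$ is an $\R$-vector space it is uniquely $2$-divisible, so $\hat H^{\ast}(G,V)=0$ and the sequence $0\to M\to V\to V/\Lambda\to 0$ yields $\hat H^{0}(G,V/\Lambda)\cong\hat H^{1}(G,M)$. Finally I would note that the additive Herbrand quotient $h(G,M)=|\hat H^{0}(G,M)|/|\hat H^{1}(G,M)|$ equals $1$: it is classical that every $\Z[\Z/2]$-lattice is a direct sum of copies of $\Z$, $\Z^{-}$, and $\Z[\Z/2]$, whose Herbrand quotients are $2,\tfrac12,1$, so $h(G,M)=2^{a-b}$ where $a$ and $b$ are the multiplicities of the trivial and the sign character in $M\otimes\Q\cong L\cong K\oplus K^{-}$; and here $a=b=[K:\Q]$. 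Consequently
\begin{equation*}
    n(X)=|\hat H^{0}(G,M)|=\bigl[\,M\cap K:\text{Tr}_{L/K}(M)\,\bigr].
\end{equation*}

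It then remains to evaluate this index. As recalled just before the theorem, after absorbing the even part of each local exponent — which amounts to multiplying $M$ by an element of $K^{\times}$ and does not change the index — the $\sigma$-stable fractional $B$-ideal $M$ can be written $M=\mathfrak{a}B\cdot\prod_{i}\PP_{i}^{\epsilon_{i}}$ with $\mathfrak{a}$ a fractional ideal of $A$ prime to the ramified primes, $\PP_{i}$ running over the primes of $B$ ramified over $A$ (so that $\PP_{i}^{2}=\p_{i}B$), and $\epsilon_{i}\in\{0,1\}$ equal to $\text{ord}_{\PP_{i}}(\chi_{B}(\mathfrak{E}(M)))$. Then $M\cap K=\mathfrak{a}\cdot\prod_{i}\p_{i}^{\epsilon_{i}}$. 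The trace is then computed place by place: at a prime of $A$ unramified in $B$ the trace of the local fractional ideal is surjective with the expected valuation, so it cancels against $M\cap K$; at a ramified quadratic place, where the residue extension has degree $1$ and hence $\text{ord}_{\p_{i}}(\D)=d_{i}$ is also the exponent of the local different, the standard formula for the trace of a fractional ideal gives $\text{Tr}(\PP_{i}^{\epsilon_{i}})=\p_{i}^{\lfloor(\epsilon_{i}+d_{i})/2\rfloor}$. Putting this together,
\begin{equation*}
    n(X)=\bigl[\,M\cap K:\text{Tr}_{L/K}(M)\,\bigr]=\prod_{i}2^{\,f_{i}\,(\lfloor(\epsilon_{i}+d_{i})/2\rfloor-\epsilon_{i})},
\end{equation*}
the product over all primes $\p_{i}$ of $A$ ramified in $B$, with $f_{i}=[k(\p_{i}):\F_{2}]$.

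The last step is to read off the exponents. If $\p_{i}\nmid 2$ then $L/K$ is tamely ramified at $\p_{i}$, so $d_{i}=e-1=1$ and the exponent $\lfloor(\epsilon_{i}+1)/2\rfloor-\epsilon_{i}$ vanishes for both values of $\epsilon_{i}$; thus only the primes dividing both $2$ and $\D$, namely $\p_{1},\dots,\p_{m}$, contribute. For these, when $d_{i}=2a_{i}$ is even the exponent equals $a_{i}-\epsilon_{i}$, and when $d_{i}=2a_{i}+1$ is odd it equals $a_{i}$ regardless of $\epsilon_{i}$; writing $\p_{1},\dots,\p_{l}$ for those with $d_{i}$ even, this is exactly $\prod_{i=1}^{m}2^{a_{i}f_{i}}\big/\prod_{i=1}^{l}2^{\epsilon_{i}f_{i}}$, which is the claimed value.

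The first paragraph is essentially formal once the uniformization and Proposition \ref{realabelian1} are in hand; the real content sits in the index computation. I expect the delicate point to be the bookkeeping at the wildly ramified primes above $2$: one must verify that the twist $\epsilon_{i}$ extracted from $\chi_{B}(\mathfrak{E}(M))$ is precisely the exponent of the local fractional ideal $M_{\PP_{i}}=\PP_{i}^{\epsilon_{i}}$, that $M$ being a $B(G)$-module (and not merely a $B$-module) is what forces the local $G$-action to be the restriction of the Galois action on $L_{\PP_{i}}$, and that the trace-of-an-ideal formula interacts with the parity of $d_{i}=\text{ord}_{\p_{i}}(\D)$ in the way above — it is this parity that decides whether $\epsilon_{i}$ affects the final count. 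The hypotheses that $X$ be absolutely simple with sufficiently many complex multiplications and that $B$ be a Dedekind domain are used exactly to guarantee that $M$ is such a $B(G)$-module, $B$-projective of rank $1$, so that all of these ideal-theoretic manipulations are available.
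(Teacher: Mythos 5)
Your argument is correct and arrives at the same formula, but it takes a genuinely different computational path once the problem is reduced to group cohomology. Both you and the paper begin with the $G$-module exact sequence $0\to M\to \R\otimes M\to X(\C)\to 0$, use vanishing of cohomology with $\R$-vector-space coefficients, and conclude $n(X)=|H^1(G,M)|$. From there the paper stays with $H^1(G,M)\cong\mathfrak{F}(M)/(1-\sigma)M$ (kernel of the norm modulo the image of $1-\sigma$): it replaces $M$ by the ideal $N=\prod\PP_i^{\epsilon_i}$ without changing the quotient, compares $\mathfrak{F}(N)/(1-\sigma)(N)$ against $\mathfrak{F}(B)/(1-\sigma)(B)$ via a short exact sequence, and finishes by multiplicativity of $\chi_A$. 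You instead pass from $\hat H^1(G,M)$ to $\hat H^0(G,M)=M^G/\operatorname{Tr}_{L/K}(M)$ by the Herbrand-quotient theorem (using that $M\otimes\Q\cong L\cong K\oplus K^-$ has equal trivial/sign multiplicities, so $h(G,M)=1$), and then evaluate the index $[M\cap K:\operatorname{Tr}_{L/K}(M)]$ locally, prime by prime, with the standard formula $\operatorname{Tr}(\PP_i^n)=\p_i^{\lfloor (n+d_i)/2\rfloor}$ where $d_i$ is the local different exponent. The Herbrand-quotient conversion and the different/trace bookkeeping are extra ideas not present in the paper's proof; what they buy is a computation that is manifestly local and that makes the role of the parity of $\operatorname{ord}_{\p_i}(\D)$ transparent, whereas the paper's route avoids the Herbrand quotient entirely and keeps everything in terms of $\chi_A$-multiplicativity. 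Two minor points to tidy up: your exponent formula is written as a product over all ramified primes with $f_i=[k(\p_i):\F_2]$, which is only literally meaningful when $\p_i\mid(2)$ (for tame primes the factor is $1$ so no harm is done, but the notation should be restricted); and you reuse the symbol $d_i$ for $\operatorname{ord}_{\p_i}(\D)$ while the surrounding text uses $d_i$ for $\lfloor(e_i+1)/2\rfloor$ — worth renaming to avoid a clash.
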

\begin{proof}
    Let $X=X_{\R}(\Phi, M)$. By definition of $X$, there is a short exact sequence of $G$-modules
    \begin{equation*}
        0\to M\to \R\otimes M\to X(\C)\to 0.
    \end{equation*}
    Taking group cohomology, and using $H^1(G,\R\otimes M)=0$, we obtain
    \begin{equation*}
        0\to M^G\to \R\otimes M^G\to X(\R)\to H^1(G, M)\to 0.
    \end{equation*}
    Thus, the number of connected components of $X(\R)$ is equal to the cardinality of $H^1(G,M)=X(\R)/X(\R)^0$.
    For any $B(G)$-module $M$, let
    \begin{equation*}
        \mathfrak{F}(M)=\{ m\in M|m+\sigma m=0\}.
    \end{equation*}
    Then $H^1(G,M)\cong\mathfrak{F}(M)/(1-\sigma)(M)$.
    Define the $B(G)$-submodule
    \begin{equation*}
        N=\prod^{n}_{i=1}\PP^{\epsilon_i},
    \end{equation*}
    of $B$, where $\epsilon_i=\text{ord}_{\PP_i}(\chi_{B}(\mathfrak{E}(M)))$. This implies 
    \begin{equation*}
        \mathfrak{F}(N)/(1-\sigma)(N)\cong\mathfrak{F}(M)/(1-\sigma)(M),
    \end{equation*}
    as $A$-module. Therefore the number of connected components of $X(\R)$ is equal to $\#\mathfrak{F}(N)/(1-\sigma)(N)$.\\
    Since the canonical map $A/\p_i\to B/\PP_i$ is bijective and $\sigma$ acts as the identity on $A$, we have $(1-\sigma)(N)=(1-\sigma)(B)$,
    yielding the exact sequence
    \begin{equation*}
        0\to\mathfrak{F}(N)/(1-\sigma)(N)\to\mathfrak{F}(B)/(1-\sigma)(B)\to\mathfrak{F}(B)/\mathfrak{F}(N)\to 0.
    \end{equation*}
    Because 2 annihilates the $A$-module $\mathfrak{F}(B)/(1-\sigma)(B)$ and $B\mathfrak{F}(B)$ is square-free, we obtain:
    \begin{equation*}
        \chi_A(\mathfrak{F}(N)/(1-\sigma)(N))=\chi_A(\mathfrak{F}(B)/(1-\sigma)(B))\cdot\chi_A(\mathfrak{F}(B)/\mathfrak{F}(N))^{-1}=\prod_{i=1}^m\p_i^{a_i}\cdot\prod^{l}_{i=1}\p_i^{-\epsilon_i}.
    \end{equation*}
    Hence, the number of connected components of $X(\R)$ is 
    \begin{equation*}
        \prod^m_{i=1}2^{a_i f_i}/\prod^l_{i=1}2^{\epsilon_i f_i},
    \end{equation*}
    as claimed.
\end{proof}

\begin{remark}\cite{Huisman1994}\label{2.5}
    It is immediate that the number of connected components of $X_\R(M,\Phi)(\R)$ is independent of $\Phi$. Given the values of $a_i$ and $f_i$ as in Theorem \ref{components1}, then for any choices of $\epsilon\in\{0,1\}$, i$=1,...,l$, there exists an $X\in\mathcal{A}_{\R}(B/A)$ such that the number of connected 
    components of $X(\R)$ equals to the number given in Theorem \ref{components1}. Take $M=\prod^l_{i=1}{\PP}_i^{\epsilon_1}$, which is a $B(G)$ module and projective of rank 1 over $B$.
    Choose any simple complex structure $\Phi$ on $\R\otimes B$. Then $X=X_{\R}(M,\Phi)$ is an element of $\mathcal{A}_{\R}(B/A)$, and Theorem \ref{components1} ensures $X(\R)$ has the desired number of connected components.
\end{remark}

\begin{remark}\cite{Huisman1994}\label{2.6}
   It is well known that if $\p$ is a nonzero prime ideal of $A$ dividing the discriminant $\D$ of $B$ over $A$,then
   \begin{equation*}
    \begin{cases}
        2\leq\text{ord}_{\p}(\D)\leq 2\text{ord}_{\p}(2)+1, & \text{if}\:\p\:|\:(2),\\
        \text{ord}_{\p}{\D}=1, & \text{if}\:\p \not|\:(2). 
    \end{cases}
   \end{equation*} 
   In particular, for any $X\in\mathcal{A}_{\R}(B/A)$, and using the notation from Theorem \ref{components1},
   \begin{equation*}
    0\leq\sum_{i=1}^m a_i f_i-\sum_{i=1}^l\epsilon_i f_i\leq\sum_{i=1}^m\text{ord}_{{\p}_i}(2)\cdot f_i=[K:\Q]=\text{dim}(X),
   \end{equation*}
   since $\epsilon_i=0$ or 1. Hence, by Theorem \ref{components1}, the number of connected components of $X(\R)$ is at least 1, and at most $2^{\text{dim}(X)}$. This is in accordance with the discussion at the beginning of this section.
\end{remark}

Let $\gamma$ be the integral-valued function one the set $\mathcal{A}_{\R}(B/A)$, defined by letting $\gamma(X)$ be the number of connected components of the set of 
real points $X(\R)$. Then it follows that 
\begin{equation*}
    \gamma(\mathcal{A}_{\R}(B/A))\subset\{ 1,2,...,2^g \},
\end{equation*}
where $2g=\text{rank}(B)$.

\begin{corollary}\cite{Huisman1994}\label{components2}
    Let $2g=\text{rank}(B)$. Suppose that the principal ideal $\p=(2)$ of $A$ is a prime ideal. Let $X\in\mathcal{A}_{\R}(B/A)$. Then, either,\\
    (i)\:$ord_{\p}(\D)=0$, in which case $\gamma(\mathcal{A}_{\R})(B/A)=\{ 1\}$, in particular $X(\R)$ is connected, or,\\
    (ii)\:$ord_{\p}(\D)=2$, in which case $\gamma(\mathcal{A}_{\R})(B/A)=\{ 1,2^g\}$, in particular $X(\R)$ is connected or has $2^g$ connected components, or,\\
    (iii)\:$ord_{\p}(\D)=3$, in which case $\gamma(\mathcal{A}_{\R})(B/A)=\{ 2^g\}$, in particular $X(\R)$ has $2^g$ connected components.
\end{corollary}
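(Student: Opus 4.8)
The plan is to read the statement off directly from Theorem~\ref{components1}, using that the hypothesis ``$\p=(2)$ is prime in $A$'' pins down essentially all of the numerical data in that formula. First I would record two preliminary observations. Because $X$ admits sufficiently many complex multiplications, $\mathrm{rank}(B)=2g$ equals $[L:\Q]$ and $[L:K]=2$, so $[K:\Q]=g$ and $A=B\cap K$ is an order in $K$; in particular $A$ is free of rank $g$ as a $\Z$-module, so $A/(2)$ is an $\F_2$-vector space of dimension $g$. Since $(2)$ is assumed prime, $A/(2)$ is moreover a field, hence $A/(2)\cong\F_{2^g}$; thus $\p$ is the unique prime of $A$ above $2$ and, in the notation of Theorem~\ref{components1}, $f_1=[k(\p):\F_2]=g$. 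Second, since $\text{ord}_{\p}(2)=1$, the ramification bound of Remark~\ref{2.6} forces $\text{ord}_{\p}(\D)\in\{0,2,3\}$: it is $0$ when $\p$ is unramified, and otherwise lies between $2$ and $2\,\text{ord}_{\p}(2)+1=3$. This is exactly the trichotomy in the statement, so it remains to evaluate the formula of Theorem~\ref{components1} in each case.

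In case (i), $\text{ord}_{\p}(\D)=0$ means $\p\notin S$, so $m=l=0$ and the formula is an empty product; hence $X(\R)$ is connected for every $X\in\mathcal{A}_{\R}(B/A)$, i.e.\ $\gamma(\mathcal{A}_{\R}(B/A))=\{1\}$. In case (iii), $\text{ord}_{\p}(\D)=3$ is odd, so $m=1$, $l=0$, and $\p_1=\p$ with $a_1=\lfloor 3/2\rfloor=1$; there is no factor of the form $2^{\epsilon_i f_i}$, so Theorem~\ref{components1} yields $2^{a_1 f_1}=2^{g}$ for every such $X$, giving $\gamma(\mathcal{A}_{\R}(B/A))=\{2^g\}$. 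In case (ii), $\text{ord}_{\p}(\D)=2$ is even, so $m=l=1$, $\p_1=\p$, $a_1=\lfloor 2/2\rfloor=1$, and the number of components is $2^{a_1 f_1}/2^{\epsilon_1 f_1}=2^{g(1-\epsilon_1)}$ with $\epsilon_1\in\{0,1\}$, namely $2^g$ if $\epsilon_1=0$ and $1$ if $\epsilon_1=1$. By Remark~\ref{2.5}, both values of $\epsilon_1$ are attained by suitable $X=X_{\R}(M,\Phi)\in\mathcal{A}_{\R}(B/A)$ (take $M=B$, resp.\ $M=\PP_1$), so $\gamma(\mathcal{A}_{\R}(B/A))=\{1,2^g\}$, with $X(\R)$ connected precisely when $\epsilon_1=\text{ord}_{\PP_1}(\chi_B(\mathfrak{E}(M)))=1$.

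The argument is thus bookkeeping on top of Theorem~\ref{components1}, and I do not expect a serious obstacle; the points that need genuine care are (a) the identification $f_1=g$, which is exactly where the hypothesis that $(2)$ is prime in $A$ is used, since otherwise $a_1 f_1$ need not equal $g$; (b) restricting the possible values of $\text{ord}_{\p}(\D)$ to $\{0,2,3\}$ via Remark~\ref{2.6}; and (c) invoking Remark~\ref{2.5} in case (ii) to upgrade the a priori inclusion $\gamma(\mathcal{A}_{\R}(B/A))\subseteq\{1,2^g\}$ to an equality. I would also note at the start that $\mathcal{A}_{\R}(B/A)$ is nonempty (we are handed an $X\in\mathcal{A}_{\R}(B/A)$), so the displayed sets are genuinely attained rather than mere upper bounds, and that the conventions for $m$, $l$, $a_i$, $\epsilon_i$ must be tracked carefully against the definitions preceding Theorem~\ref{components1}.
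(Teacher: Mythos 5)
Your argument is correct and is exactly the route the paper takes: the paper's proof is a one-line citation of Remark~\ref{2.6}, Theorem~\ref{components1}, and Remark~\ref{2.5}, and you have simply carried out the bookkeeping (identifying $f_1=g$ since $(2)$ is prime, constraining $\mathrm{ord}_{\p}(\D)\in\{0,2,3\}$, evaluating the formula in each case, and invoking Remark~\ref{2.5} to realize both values when $l=1$) that the paper leaves implicit.
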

\begin{proof}
    These follow from Remark \ref{2.6}, Theorem \ref{components1} and Remark \ref{2.5}.
\end{proof}

\begin{example}\cite{Huisman1994}\label{elliptic}
    We now apply Corollary \ref{components2} to the case of elliptic curves over $\R$, i.e., abelian varieties over $\R$ of dimension 1. Let $B$ be the ring of 
    integers in the quadratic imaginary extension $L=\Q(\sqrt{d})$ of $\Q$, where $d$ is square-free, and let $A=\Z$. Then,
    \begin{equation*}
        \D=
        \begin{cases}
        (d), & \text{if}\: d\equiv 1 \:\text{mod}\: 4,\\
        (4d), & \text{otherwise}.
        \end{cases}
    \end{equation*}\\
    If $E$ is an elliptic curve over $\R$ with $\text{End}(E_{\C})\cong B$ then\\
    (i)\:$E(\R)$ is connected if $d\equiv 1\:\text{mod}\:4$,\\
    (ii)\:$E(\R)$ has 2 connected components if $d\equiv 2\:\text{mod}\:4$, and\\
    (iii)\:$E(\R)$ is connected or has 2 connected components otherwise.\\
    To illustrate these cases, we give an example for each.\\
    In case (i), let $d=-3$. There are exactly 2 non-isomorphic elliptic curves over $\R$ admitting
    complex multiplication by $\frac{1}{2}+\frac{1}{2}\sqrt{-3}$. Namely, the curve given by the equation
    $y^2=x^3-1$, corresponding to the lattice $\Z+\Z(\frac{1}{2}+\frac{1}{2}\sqrt{-3})$, and the curve 
    $y^2=x^3+1$, corresponding to the lattice $\Z+\Z(\frac{1}{2}+\frac{1}{6}\sqrt{-3})$. Both curves have a 
    connected set of real points, as is evident both from the equations and from the corresponding lattices.\\
    In case (ii), $d=-2$ as an example. There are exactly 2 non-isomorphic elliptic curves over $\R$ admitting complex
    multiplication with $\sqrt{-2}$. These are given by $y^2=x(x^2-4x+2)$, corresponding to $\Z+\Z\sqrt{-2}$, and $y^2=x(x^2+4x+2)$, corresponding to $\Z+\Z\frac{1}{2}\sqrt{-2}$. In both cases, the real locus consists of two connected components.\\
    Finally, in case (iii), let $d=-1$. Again, there are also 2 non-isomorphic elliptic curves over $\R$ admitting complex multiplication
    with $\sqrt{-1}$. These are $y^2=x^3-x$, corresponding to $\Z+\Z\sqrt{-1}$, and $y^2=x^3+x$, corresponding
    to $\Z+\Z(\frac{1}{2}+\frac{1}{2}\sqrt{-1})$. The former has a real locus consisting of 2 connected components, while
    latter has only one.
\end{example}

\begin{corollary}\cite{Huisman1994}\label{2.9}
    The following conditions are equivalent.\\
    (i)\: For any $X, Y\in\mathcal{A}_{\R}(B/A)$, the set $X(\R)$ and $Y(\R)$ have the same number of connected components.\\
    (ii)\: $ord_{\p}(\D)$ is odd, for each prime ideal $\p$ dividing the discriminant $\D$ of $B$ over $A$.
\end{corollary}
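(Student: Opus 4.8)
The plan is to extract both conditions directly from the formula of Theorem~\ref{components1}, using Remark~\ref{2.6} to dispose of the primes of $A$ not dividing $(2)$ and Remark~\ref{2.5} to identify which numbers of connected components actually occur; throughout we work under the standing hypotheses that make $\mathcal{A}_\R(B/A)$ nonempty. Recall that, with the nested enumeration $\{\p_1,\dots,\p_l\} \subset \{\p_1,\dots,\p_m\} \subset \{\p_1,\dots,\p_n\}$ fixed before Theorem~\ref{components1}, any $X \in \mathcal{A}_\R(B/A)$ may be written $X \cong X_\R(M,\Phi)$ and then
\begin{equation*}
    n(X) = \prod_{i=1}^{m} 2^{a_i f_i} \bigg/ \prod_{i=1}^{l} 2^{\epsilon_i f_i},
\end{equation*}
where $a_i = \lfloor \mathrm{ord}_{\p_i}(\D)/2 \rfloor$ and $f_i = [k(\p_i):\F_2] \geq 1$ depend only on the pair $(B/A)$, while the tuple $(\epsilon_1,\dots,\epsilon_l) \in \{0,1\}^l$ depends on $X$ through $M$, and $n(X)$ is independent of $\Phi$. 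By Remark~\ref{2.5}, every tuple in $\{0,1\}^l$ is realized by some $X \in \mathcal{A}_\R(B/A)$. Hence the set $\{ n(X) : X \in \mathcal{A}_\R(B/A)\}$ equals $\{ \prod_{i=1}^m 2^{a_i f_i}/\prod_{i=1}^l 2^{\epsilon_i f_i} : (\epsilon_i) \in \{0,1\}^l \}$, and since each $f_i \geq 1$ this set is a singleton precisely when $l = 0$. So the corollary reduces to the single claim that condition (ii) is equivalent to $l = 0$.

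To see this equivalence, I would first note that by Remark~\ref{2.6} every prime $\p \mid \D$ with $\p \nmid (2)$ has $\mathrm{ord}_\p(\D) = 1$, which is odd; therefore ``$\mathrm{ord}_\p(\D)$ is odd for every $\p \mid \D$'' holds if and only if ``$\mathrm{ord}_\p(\D)$ is odd for every $\p \mid \D$ with $\p \mid (2)$'', and by the defining property of the sublist $\{\p_1,\dots,\p_l\}$ (the $\p \in S$ dividing $(2)$ for which $\mathrm{ord}_\p(\D)$ is even) this last statement says exactly that $l = 0$; thus (ii) $\iff l = 0$. Combining with the first paragraph: if $l = 0$ then $n(X) = \prod_{i=1}^m 2^{a_i f_i}$ is a quantity attached to $B/A$ alone, so any $X, Y \in \mathcal{A}_\R(B/A)$ satisfy $n(X) = n(Y)$, which gives (ii) $\Rightarrow$ (i); conversely, if $l \geq 1$ then Remark~\ref{2.5} lets me choose $X$ with $(\epsilon_1,\dots,\epsilon_l) = (0,\dots,0)$ and $Y$ with $(\epsilon_1,\dots,\epsilon_l) = (1,0,\dots,0)$, whence $n(X)/n(Y) = 2^{f_1} \geq 2$ and $n(X) \neq n(Y)$, which is the contrapositive of (i) $\Rightarrow$ (ii).

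The argument is short and purely combinatorial, with all substantive input imported from the earlier results. The step that carries the weight is the appeal to Remark~\ref{2.5}: for (i) $\Rightarrow$ (ii) the formula of Theorem~\ref{components1} alone does not suffice, since a priori the values of $(\epsilon_i)$ that are actually attained by abelian varieties in $\mathcal{A}_\R(B/A)$ might conspire to give a constant $n$; one genuinely needs that when $l \geq 1$ at least two distinct tuples occur, together with the positivity $f_i \geq 1$, to conclude that $n$ is non-constant. Everything else is bookkeeping with the enumeration of the ramified primes.
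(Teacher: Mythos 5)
Your proof is correct and follows essentially the same route as the paper's: you reduce condition (ii) to the statement $l=0$ via the observation (equivalent to Remark~\ref{2.6}, phrased in the paper in terms of wild ramification) that primes dividing $\D$ but not $(2)$ have $\mathrm{ord}_\p(\D)=1$, read off (ii)$\Rightarrow$(i) from the constancy of the formula in Theorem~\ref{components1} when $l=0$, and obtain (i)$\Rightarrow$(ii) contrapositively by invoking Remark~\ref{2.5} to realize two $(\epsilon_i)$-tuples with distinct $n(X)$ when $l\geq 1$. You fill in a detail the paper leaves implicit — the explicit choice of tuples $(0,\dots,0)$ and $(1,0,\dots,0)$ and the fact $f_1\geq 1$ guaranteeing $n(X)\neq n(Y)$ — but the argument is the same.
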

\begin{proof}
    If $\text{ord}_{\p}(\D)$ is odd for all $\p|\D$, then $l=0$ by definition. It follows from Theorem \ref{components1} that for any $X,Y\in\mathcal{A}_{\R}(B/A)$,
    the sets $X(\R)$ and $Y(\R)$ have the same number of connected componnets.\\
    On the other hand, if there exists a nonzero prime ideal $\p$ dividing $\D$ with $\text{ord}_{\p}(\D)$ even, then $B$ is wildly ramified over $\p$. Hence $\p |(2)$ and $l>0$.
    By Reamrk \ref{2.5}, there exist $X,Y\in\mathcal{A}_{\R}(B/A)$ such that the number of connected components of $X(\R)$ and $Y(\R)$ are not equal.
\end{proof}

\begin{corollary}\cite{Huisman1994}\label{2.10}
    The set $X(\R)$ of real points of $X$ is connected, for any $X\in\mathcal{A}_{\R}(B/A)$, if and only if the discriminant $\D$ of
    $B$ over $A$ and the principal ideal $(2)$ are relatively prime.
\end{corollary}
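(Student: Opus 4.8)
The plan is to deduce everything from the count in Theorem \ref{components1}, using Remark \ref{2.6} for the one inequality that does real work and Remark \ref{2.5} to produce an explicit disconnected example when needed. The first observation is purely formal: the ideals $\D$ and $(2)$ of $A$ are relatively prime if and only if no nonzero prime ideal $\p$ of $A$ simultaneously divides $\D$ and $(2)$, that is, if and only if the integer $m$ of Theorem \ref{components1} --- the number of primes $\p_i \mid \D$ with $\p_i \mid (2)$ --- is zero. Throughout we keep the standing hypotheses of Remark \ref{fact} (so that $\mathcal{A}_\R(B/A) \ne \emptyset$ and the enumeration $\p_1, \dots, \p_n$ makes sense).

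For the ``if'' direction, assume $\D$ and $(2)$ are relatively prime, so $m = 0$, and hence $l = 0$ as well (since $0 \le l \le m$). Then for every $X \in \mathcal{A}_\R(B/A)$ the quantity $\prod_{i=1}^{m} 2^{a_i f_i} / \prod_{i=1}^{l} 2^{\epsilon_i f_i}$ of Theorem \ref{components1} is an empty product divided by an empty product, hence equals $1$; so $X(\R)$ is connected.

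For the ``only if'' direction I would argue by contraposition: suppose $\D$ and $(2)$ are not relatively prime, so $m \ge 1$. Then in the enumeration of Theorem \ref{components1} the prime $\p_1$ satisfies $\p_1 \mid \D$ and $\p_1 \mid (2)$; by Remark \ref{2.6} this forces $\mathrm{ord}_{\p_1}(\D) \ge 2$, whence $a_1 = \lfloor \mathrm{ord}_{\p_1}(\D)/2 \rfloor \ge 1$, while $f_1 = [k(\p_1):\F_2] \ge 1$. Now apply Remark \ref{2.5} with the choice $\epsilon_i = 0$ for all $i = 1, \dots, l$: this yields an $X \in \mathcal{A}_\R(B/A)$ whose number of connected components equals $\prod_{i=1}^{m} 2^{a_i f_i} \ge 2^{a_1 f_1} \ge 2$, so $X(\R)$ is disconnected. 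This establishes the contrapositive and completes the proof.

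I do not expect a genuine obstacle here, as the corollary is essentially an unwinding of Theorem \ref{components1}; the only points requiring care are the bookkeeping around empty products when $m = 0$, and making sure that Remark \ref{2.5} is invoked with a legitimate family of signs (all $\epsilon_i = 0$, which is always available) rather than tacitly assuming $l > 0$. The one piece of substance --- that wild ramification forces $\mathrm{ord}_{\p}(\D) \ge 2$ and hence $a_i \ge 1$ --- is already isolated in Remark \ref{2.6}, so no new estimate is needed.
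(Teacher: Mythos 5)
Your proof is correct, but it takes a genuinely different route for the ``only if'' direction. The paper argues directly: assuming every $X(\R)$ is connected, it invokes Corollary~\ref{2.9} (applied to the trivial observation that all component counts agree, namely equal $1$) to deduce that $\text{ord}_\p(\D)$ is odd for all $\p\mid\D$, hence $l=0$; then, with the denominator in Theorem~\ref{components1} gone, Remark~\ref{2.6} gives $a_i f_i\ge 1$ so the count $\prod_{i=1}^m 2^{a_i f_i}$ can equal $1$ only when $m=0$. You instead prove the contrapositive: assuming $m\ge 1$, you bypass Corollary~\ref{2.9} entirely and invoke Remark~\ref{2.5} with the explicit choice $\epsilon_i=0$ to manufacture a particular $X$ whose component count is $\prod_{i=1}^m 2^{a_i f_i}\ge 2^{a_1 f_1}\ge 2$, again via Remark~\ref{2.6}. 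The tradeoff is that the paper's route reuses the already-established Corollary~\ref{2.9} to kill $l$ once and for all, whereas yours is more self-contained and constructive, exhibiting a concrete disconnected $X$ rather than reasoning about the entire family; both hinge on the same piece of substance, namely that wild ramification forces $a_i\ge 1$. Your ``if'' direction (empty products when $m=0$) matches the paper's.
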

\begin{proof}
    If $X(\R)$ is connected for every $X\in\mathcal{A}_{\R}(B/A)$, then according to Corollary \ref{2.9}, $l=0$. Also by Remark \ref{2.6}, $a_i\geq 1$ for $i=1,...,m$, and it follows from Theorem \ref{components1}
    that $m=0$. Hence, $\D$ and $(2)$ are relatively prime.\\
    Conversely, if $\D$ and $(2)$ are relatively prime, then by definition $m=0$. Therefore, it follows from Theorem \ref{components1} that $X(\R)$ is connected for any $\mathcal{A}_{\R}(B/A)$.
\end{proof}

\begin{example}\cite{Huisman1994}
    Let $k$ be an integer with $k>2$, and let $\xi$ be a primitive $k$-th root of unity. Let $B=\Z[\xi]$. Then the field of fractions $L$ of $B$ is a CM-field. Let $K$ 
    be the maximal totally real subfield of $L$, that is, $K=\Q(\xi+\xi^{-1})$, and let $A=B\cap K$. Then, the rings $A$ and $B$ satisfy the conditions in Remark \ref{fact}. To study the number of connected components of $X(\R)$, 
    for any $X\in\mathcal{A}_{\R}(B/A)$, we consider two cases: whether $k$ is even or not.\\
    First, if $k$ is odd. Let $F_k$ be the $k$-th cyclotomic polynomial. It is not difficult to see that 
    \begin{equation*}
        \prod_{i\in(\Z/k\Z)^*}(1-\xi^i)=F_k(1)=
        \begin{cases}
            p, & \text{if}\:k=p^a,\:p\:\text{prime},\\
            1, & \text{otherwise}.
        \end{cases}
    \end{equation*}
    Therefore, $\D=\text{N}_{L/K}((\xi-\xi^{-1}))$ and $(2)$ are relatively prime. It follows from Corollary \ref{2.10} that if $k$ is odd, the set $X(\R)$ of real points of $X$ is connected for every 
    $X\in\mathcal{A}_{\R}(B/A)$.\\
    If $k$ is even, write $2^a k'=k$ with $k'$ odd and $a>0$. There exists exactly one prime ideal $\p$ of $A$ lying over the prime idea $(2)\subset\Z$, and $[k(\p):\mathbb{F}_2]=\varphi(k')$, where $\varphi$ is the 
    Euler totient function. There also exists a unique prime ideal $\PP$ of $B$ lying over $\p$, and $\PP^2=B\p$. Moreover $\text{ord}_{\p}=2$. Therefore, it follows from Theorem \ref{components1} that $X(\R)$ is either connected 
    or has $2^{\varphi(k')}$ connected components.
\end{example}

\begin{corollary}\cite{Huisman1994}
    Let $2g=rank(B)$. Then, the set $X(\R)$ of real points of $X$ has $2^g$ connected components, for every $X\in\mathcal{A}_{\R}(A/B)$,
    if and only if $ord_{\p}(\D)=2ord_{\p}(2)+1$, for every nonzero prime $\p$ of $A$ that divides the principal ideal $(2)$.
\end{corollary}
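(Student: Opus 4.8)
The plan is to read the conclusion directly off the component‑count formula of Theorem~\ref{components1}, which says that the number of connected components of $X(\R)$ equals $\prod_{i=1}^m 2^{a_i f_i}\big/\prod_{i=1}^l 2^{\epsilon_i f_i}=2^{\,\sum_{i=1}^m a_i f_i-\sum_{i=1}^l \epsilon_i f_i}$, and to pin down its exponent using the two auxiliary inputs already on hand. First I would record the numerology: since $X$ admits sufficiently many complex multiplications and $\mathrm{rank}(B)=2g$, we have $\dim(X)=g$, and Remark~\ref{2.6} gives both the identity $\sum_{i=1}^m \mathrm{ord}_{\p_i}(2)\,f_i=[K:\Q]=g$ and the inequality $\mathrm{ord}_{\p_i}(\D)\leq 2\,\mathrm{ord}_{\p_i}(2)+1$, equivalently $a_i=\lfloor \mathrm{ord}_{\p_i}(\D)/2\rfloor\leq \mathrm{ord}_{\p_i}(2)$. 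The identity is the workhorse: each $f_i\geq 1$ and each $\mathrm{ord}_{\p_i}(2)\geq 1$, so it forces $\{\p_1,\dots,\p_m\}$ to be exactly the set of all primes of $A$ above $(2)$ — i.e.\ every prime dividing $(2)$ already divides $\D$ — so that assertions about the $\p_i$ are assertions about all primes over $(2)$.

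For necessity, assuming every $X\in\mathcal{A}_{\R}(B/A)$ has $2^g$ components, I would feed two well‑chosen members of $\mathcal{A}_{\R}(B/A)$ into the formula, using Remark~\ref{2.5} to realize the extreme choices of the $\epsilon_i$ while the $a_i,f_i$ stay fixed. Choosing $\epsilon_i=0$ for all $i\leq l$ forces $\sum_{i=1}^m a_i f_i=g=\sum_{i=1}^m \mathrm{ord}_{\p_i}(2) f_i$; since $a_i\leq \mathrm{ord}_{\p_i}(2)$ and $f_i>0$, every summand must match, i.e.\ $a_i=\mathrm{ord}_{\p_i}(2)$ for all $i$. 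Choosing instead $\epsilon_i=1$ for all $i\leq l$ forces $\sum_{i=1}^m a_i f_i-\sum_{i=1}^l f_i=g=\sum_{i=1}^m a_i f_i$, hence $\sum_{i=1}^l f_i=0$ and $l=0$. With $l=0$ every $\mathrm{ord}_{\p_i}(\D)$ is odd, and combined with $\lfloor \mathrm{ord}_{\p_i}(\D)/2\rfloor=\mathrm{ord}_{\p_i}(2)$ this pins down $\mathrm{ord}_{\p_i}(\D)=2\,\mathrm{ord}_{\p_i}(2)+1$; by the first paragraph this is the asserted statement for every prime $\p\mid(2)$.

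For sufficiency, assuming $\mathrm{ord}_{\p}(\D)=2\,\mathrm{ord}_{\p}(2)+1$ for all $\p\mid(2)$, I would note that each such order is odd, so $l=0$, and positive, so every prime over $(2)$ divides $\D$ and thus occurs among $\p_1,\dots,\p_m$, with $a_i=\lfloor(2\,\mathrm{ord}_{\p_i}(2)+1)/2\rfloor=\mathrm{ord}_{\p_i}(2)$. Substituting into Theorem~\ref{components1} then gives a component count of $2^{\,\sum_{i=1}^m \mathrm{ord}_{\p_i}(2) f_i}=2^{[K:\Q]}=2^g$ for every $X\in\mathcal{A}_{\R}(B/A)$.

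The two substitutions are routine; the points I expect to need care are that the $\p_i$ of Theorem~\ref{components1} range only over primes dividing both $(2)$ and $\D$, so one must invoke the equality $\sum_{i=1}^m \mathrm{ord}_{\p_i}(2) f_i=[K:\Q]$ of Remark~\ref{2.6} to upgrade statements about the $\p_i$ to statements about all primes over $(2)$, and that the necessity direction genuinely requires two different abelian varieties — one realizing $\epsilon_i\equiv 0$ and one realizing $\epsilon_i\equiv 1$ — which is exactly what Remark~\ref{2.5} provides, rather than a single one.
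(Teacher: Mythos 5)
Your proof is correct and essentially reproduces the paper's argument: both read the component count off Theorem~\ref{components1} and pin down the exponent using the bounds and degree identity of Remark~\ref{2.6}, and both exploit the realizability of arbitrary $\{\epsilon_i\}$ — the paper packaging this through Corollary~\ref{2.9}, you appealing directly to Remark~\ref{2.5} with the two test varieties $\epsilon_i\equiv 0$ and $\epsilon_i\equiv 1$, which is a mild unbundling of the same idea rather than a different route. Your added care that the $\p_i$ a priori range only over primes dividing both $\D$ and $(2)$, and that the degree identity $\sum_{i=1}^m \mathrm{ord}_{\p_i}(2) f_i=[K:\Q]$ is what promotes the conclusion to all primes over $(2)$, is a legitimate point that the paper's own proof leaves implicit.
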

\begin{proof}
    If $\text{ord}_{\p}(\D)=2\text{ord}_{\p}(2)+1$ for every nonzero prime ideal $\p$ of $A$ such that $\p |(2)$, then 
    \begin{equation*}
        \sum_{i=1}^m a_i f_i=\sum_{i=1}^m \text{ord}_{\p_i}(2) f_i=[K:\Q]=g.
    \end{equation*}
    According to Theorem \ref{components1}, the number of connected components of $X(\R)$, for any $X\in\mathcal{A}_{\R}(B/A)$, is equal to $2^g$.\\
    Conversely, if for every $X\in\mathcal{A}_{\R}(B/A)$, the number of connected componnets of $X(\R)$ is equal to $2^g$, then $l=0$ by Corollary \ref{2.9}, and 
    \begin{equation*}
        \sum^{m}_{i=1}a_i f_i=g=[K:\Q],
    \end{equation*}
    by Theorem \ref{components1}. Since we also have $\text{ord}_{\p_i}(\D)\leq2\text{ord}_{\p_i}(2)+1$ by Remark \ref{2.6}, and $a_i\leq\text{ord}_{\p_i}(2)$ by definition, it follows that
    $a_i=\text{ord}_{\p_i}(2)$ for $i=1,...,m$. Moreover, since $l=0$, we know $\text{ord}_{\p_i}(\D)$ is odd. Putting all of this together, we conclude that $\text{ord}_{\p_i}(\D)=2\text{ord}_{\p_i}(2)+1$,
    for $i=1,...,m$.
\end{proof}

\section{A presentation of splitting over real numbers}\label{final}
As we have analyzed the decomposition of motives in Section \ref{motive}, now we need to determine the splitting in $\text{SH}^-(\R)$.
From Section \ref{topologyofrealpoints}, we have known the topology for the real points of a real abelian variety, that is to say, we know how to calculate the number of connected components and 
for each component we get the product of $S^1$. Using classical algebraic topology, we can deduce the splitting completely.

First recall the stable splitting of the product of CW complexes after taking reduced suspension once:

\begin{proposition}\cite{MR1867354}\label{stablesplitting}
    If $X$ and $Y$ are CW complexes, then $\Sigma (X\times Y)\simeq\Sigma X\vee\Sigma Y\vee\Sigma(X\wedge Y)$.
\end{proposition}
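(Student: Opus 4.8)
The plan is to prove the stable splitting $\Sigma(X\times Y)\simeq\Sigma X\vee\Sigma Y\vee\Sigma(X\wedge Y)$ by exhibiting an explicit homotopy equivalence, built from the inclusion of the wedge into the product and a collapse map in the other direction. First I would recall the cofiber sequence $X\vee Y\hookrightarrow X\times Y\to X\wedge Y$, which is a consequence of the definition of the smash product as the quotient $X\wedge Y=(X\times Y)/(X\vee Y)$. The first key point is that after one suspension this cofiber sequence splits: the inclusion $i\colon X\vee Y\to X\times Y$ admits a stable retraction. Indeed, the two projections $p_X\colon X\times Y\to X$ and $p_Y\colon X\times Y\to Y$ assemble (using the fold-type structure, i.e.\ the coproduct in pointed spaces) to a map $X\times Y\to X\vee Y$ after suspension; more concretely, $\Sigma(X\times Y)\to\Sigma X\vee\Sigma Y$ is obtained from $\Sigma p_X$ and $\Sigma p_Y$ via the co-H-space structure on the suspension, and precomposing with $\Sigma i$ recovers the identity on $\Sigma X\vee\Sigma Y$ up to homotopy because $p_X\circ i$ and $p_Y\circ i$ restrict correctly to each wedge summand.

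Next, since $\Sigma i$ is a split monomorphism in the homotopy category, the cofiber sequence $\Sigma(X\vee Y)\to\Sigma(X\times Y)\to\Sigma(X\wedge Y)$ splits, giving $\Sigma(X\times Y)\simeq\Sigma(X\vee Y)\vee\Sigma(X\wedge Y)$. Finally one uses the elementary homotopy equivalence $\Sigma(X\vee Y)\simeq\Sigma X\vee\Sigma Y$ (suspension commutes with wedge, since it is a left adjoint / preserves colimits up to homotopy, or simply by inspection of the double cone) to rewrite the right-hand side in the stated form. Assembling the equivalences, one obtains $\Sigma(X\times Y)\simeq\Sigma X\vee\Sigma Y\vee\Sigma(X\wedge Y)$.

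The main obstacle, and the step deserving the most care, is constructing the retraction $\Sigma(X\times Y)\to\Sigma X\vee\Sigma Y$ and verifying it genuinely splits $\Sigma i$: one must check that the map built from $\Sigma p_X$, $\Sigma p_Y$ and the pinch map $\Sigma(X\times Y)\to\Sigma(X\times Y)\vee\Sigma(X\times Y)$ restricts to the identity on each wedge summand $\Sigma X$ and $\Sigma Y$, using that $p_X$ kills the $Y$-factor of the basepoint slice and vice versa. (This is precisely where one extra suspension is needed — without it, $X\times Y$ is not a co-H-space and no such retraction exists in general; e.g.\ $S^1\times S^1$ is not homotopy equivalent to $S^1\vee S^1\vee S^2$.) The remaining steps — the cofiber sequence for the smash product, splitting of a cofiber sequence along a split mono, and $\Sigma(X\vee Y)\simeq\Sigma X\vee\Sigma Y$ — are all standard and I would cite \cite{MR1867354} rather than reproduce them.
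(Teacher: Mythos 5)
Your argument is correct and is essentially the proof of Proposition 4I.1 in Hatcher, which the paper cites without reproducing: assemble the two projections, via the pinch map on $\Sigma(X\times Y)$, into a retraction of $\Sigma(X\vee Y)\hookrightarrow\Sigma(X\times Y)$, then split the cofiber sequence $X\vee Y\to X\times Y\to X\wedge Y$ after one suspension. The one remark I would add concerns the middle step: the inference ``$\Sigma i$ is a split monomorphism, hence the cofiber sequence splits as a wedge'' is automatic in a triangulated (stable) category, but this proposition is stated unstably. What one actually does is write down the comparison map $\Sigma(X\times Y)\to\Sigma(X\vee Y)\vee\Sigma(X\wedge Y)$, again via the pinch, using your retraction together with the suspended quotient map $\Sigma q\colon\Sigma(X\times Y)\to\Sigma(X\wedge Y)$; one then checks it is an isomorphism on reduced homology (the K\"unneth theorem gives the split short exact sequence $0\to\tilde H_*(X\vee Y)\to\tilde H_*(X\times Y)\to\tilde H_*(X\wedge Y)\to 0$) and invokes the homology Whitehead theorem. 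You flag this as ``standard and cited,'' which is acceptable, but since that is precisely where the single suspension (rather than pure formal nonsense) is doing the work, it is worth naming the homology-Whitehead step explicitly rather than treating the splitting of the cofiber sequence as formal.
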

\begin{proof}
    See \cite[Proposition 4I.1]{MR1867354}.
\end{proof}

For a real abelian variety $X$ of dimension $g$, from Proposition \ref{realabelian1}, we know that each component of $X(\R)$ is $(S^1)^g$. So for each component, we have the stable
splitting:

\begin{lemma}\label{topocomponent}
    $\Sigma (S^1)^g\simeq \bigvee^{g}_{i=1}\vee^{{g\choose {i}}} S^{i+1}$, and in total this splitting has $\Sigma_{i=1}^g {g\choose i}=2^g-1$ components.
\end{lemma}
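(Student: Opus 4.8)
The plan is to prove the splitting $\Sigma (S^1)^g \simeq \bigvee_{i=1}^{g} \bigvee^{\binom{g}{i}} S^{i+1}$ by iterating Proposition \ref{stablesplitting} on the product $(S^1)^g = S^1 \times S^1 \times \cdots \times S^1$. The key observation is that the smash product of $i$ copies of $S^1$ is $S^i$, so when we fully expand the iterated splitting, the wedge summands are indexed by the nonempty subsets of the $g$ coordinate circles, and a subset of size $i$ contributes a copy of $\Sigma S^i = S^{i+1}$.

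First I would set up an induction on $g$. The base case $g=1$ is trivial: $\Sigma S^1 = S^2 = \bigvee^{\binom{1}{1}} S^2$. For the inductive step, write $(S^1)^g = (S^1)^{g-1} \times S^1$ and apply Proposition \ref{stablesplitting}:
\begin{equation*}
    \Sigma((S^1)^{g-1} \times S^1) \simeq \Sigma (S^1)^{g-1} \vee \Sigma S^1 \vee \Sigma((S^1)^{g-1} \wedge S^1).
\end{equation*}
By the inductive hypothesis the first term is $\bigvee_{i=1}^{g-1} \bigvee^{\binom{g-1}{i}} S^{i+1}$, and the second term is $S^2$. For the third term, note $(S^1)^{g-1} \wedge S^1 = (S^1)^{\wedge(g-1)} \wedge S^1$ only after one also smashes the product down; more carefully, one uses that $\Sigma X \wedge Y$ splitting is itself governed by Proposition \ref{stablesplitting} applied to the factors of $(S^1)^{g-1}$. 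Concretely I would prove the auxiliary claim that $\Sigma\big((S^1)^{m} \wedge Z\big) \simeq \bigvee_{j=0}^{m} \bigvee^{\binom{m}{j}} \Sigma(S^j \wedge Z)$ for a pointed CW complex $Z$, again by induction on $m$ using $\Sigma(X \times Y)\wedge Z$-type manipulations; applying this with $Z = S^1$ and $m = g-1$ gives $\bigvee_{j=0}^{g-1}\bigvee^{\binom{g-1}{j}} S^{j+2}$. Reindexing with $i = j+1$ turns this into $\bigvee_{i=1}^{g}\bigvee^{\binom{g-1}{i-1}} S^{i+1}$.

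Combining the three pieces, the coefficient of $S^{i+1}$ in the total wedge is $\binom{g-1}{i} + [i=1] + \binom{g-1}{i-1}$, where the middle term accounts for the $S^2$ summand; since $\binom{g-1}{i} + \binom{g-1}{i-1} = \binom{g}{i}$ and the $[i=1]$ correction is absorbed by the convention $\binom{g-1}{g}=0$ handling the top term, one checks the bookkeeping matches $\binom{g}{i}$ for every $i$ from $1$ to $g$ (the edge cases $i=1$ and $i=g$ should be verified separately). This yields $\Sigma(S^1)^g \simeq \bigvee_{i=1}^g \bigvee^{\binom{g}{i}} S^{i+1}$. The count of total summands is then $\sum_{i=1}^g \binom{g}{i} = 2^g - 1$ by the binomial theorem.

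The main obstacle is purely organizational rather than conceptual: keeping the indices and multiplicities straight through the iterated application of Proposition \ref{stablesplitting}, and in particular making sure the auxiliary smash-product splitting lemma is stated with enough generality that the induction closes. One could sidestep this entirely by invoking the well-known fact that for a product of suspensions $X_1 \times \cdots \times X_g$ one has $\Sigma(X_1 \times \cdots \times X_g) \simeq \bigvee_{\emptyset \neq S \subseteq \{1,\dots,g\}} \Sigma\big(\bigwedge_{j \in S} X_j\big)$, which is the standard generalization of Proposition \ref{stablesplitting}; specializing each $X_j = S^1$ and using $\bigwedge_{j \in S} S^1 = S^{|S|}$ immediately gives the result, with the number of subsets of size $i$ being $\binom{g}{i}$.
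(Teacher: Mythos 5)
Your overall strategy matches the paper's: induct on $g$, apply Proposition \ref{stablesplitting} to $(S^1)^{g-1}\times S^1$, and reassemble via Pascal's rule. However, the auxiliary claim you introduce is false as stated, and the error propagates into the bookkeeping rather than being ``absorbed.'' Concretely, you claim $\Sigma\big((S^1)^{m}\wedge Z\big)\simeq \bigvee_{j=0}^{m}\bigvee^{\binom{m}{j}}\Sigma(S^j\wedge Z)$, but already for $m=1$, $Z=S^1$ the left side is $\Sigma(S^1\wedge S^1)=S^3$ while the right side is $\Sigma(S^0\wedge S^1)\vee\Sigma(S^1\wedge S^1)=S^2\vee S^3$; the $j=0$ term should not be there, and the correct index range is $1\le j\le m$. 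With the extra $j=0$ term, the total multiplicity of $S^2$ in your final wedge is $\binom{g-1}{1}+1+\binom{g-1}{0}=g+1$, not $\binom{g}{1}=g$, so the ``$[i=1]$ correction is absorbed'' sentence does not hold up; checking $g=2$ gives $S^2\vee S^2\vee S^2\vee S^3$ instead of $\Sigma T^2\simeq S^2\vee S^2\vee S^3$. The fix is simply to note, as the paper does, that $\Sigma\big((S^1)^{g-1}\wedge S^1\big)=\big(\Sigma(S^1)^{g-1}\big)\wedge S^1$ and apply the inductive hypothesis directly, so that the third wedge runs from $S^3$ up to $S^{g+1}$ with multiplicities $\binom{g-1}{i-1}$ for $2\le i\le g$; then the lone $S^2$ from $\Sigma Y$ supplies exactly the $\binom{g-1}{0}$ needed for $i=1$.

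Two further remarks. First, there is no need for a separate auxiliary lemma at all: the simpler observation that $\Sigma(A\wedge S^1)\simeq(\Sigma A)\wedge S^1$ and that smash distributes over wedge is all that is required, which is precisely what the paper uses. Second, your closing fallback, the general identity $\Sigma(X_1\times\cdots\times X_g)\simeq\bigvee_{\emptyset\neq S\subseteq\{1,\dots,g\}}\Sigma\big(\bigwedge_{j\in S}X_j\big)$, is correct and does give the lemma immediately by counting subsets of size $i$; if you lead with that, the whole argument becomes a one-liner and avoids the off-by-one entirely.
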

\begin{proof}
    We use induction on $g\geq 2$. The base case is the simplest example of a stable splitting occur for the torus $S^1\times S^1$. Here the reduced suspension $\Sigma (S^1\times S^1)$ is homotopy equivalent to
    $S^2\vee S^2\vee S^3$ since $\Sigma(S^1\times S^1)$ is $S^2\vee S^2$ with a 3-cell attached by the suspension of the attaching map of the 2-cell of the torus, but the latter attaching 
    map is the commutator of the two inclusions $S^1\hookrightarrow S^1$, and the suspension of this commutator is trivial since it lies in the abelian group $\pi_2(S^2\vee S^2)$. Once we get
    this first step done, assume the claim holds for $g=k$. Then combined with Proposition \ref{stablesplitting}, $\Sigma(S^1)^{k+1}\simeq \bigvee^{k}_{i=1}\vee^{{k\choose {i}}} S^{i+1}\vee S^2\vee \Sigma(\bigvee^{k}_{i=1}\vee^{{k\choose {i}}} S^{i+1})$.
    Since smash product can be distributed to each summand of wedge sum, then we can get $\Sigma(S^1)^{k+1}\simeq\bigvee^{k+1}_{i=1}\vee^{{k+1\choose {i}}} S^{i+1}$ with the combinatorial identity ${k+1\choose i}={k\choose i}+{k\choose i-1}$.
    And by induction we finish the proof.
\end{proof}

Next, taking suspension will connect these connected components together.

\begin{lemma}\label{topocomponent2}
    If $X_+$ is a real abelian variety of dimension $g$ with a disjoint base point and $X(\R)$ has $n(X)=2^d>0$ connected components, then we have the stable splitting $\Sigma (X(\R)_+)\simeq \vee^{n(X)}S^1\bigvee^{n(X)}\Sigma (S^1)^g$. And this splitting has $n(X)\cdot2^g$ components by Lemma \ref{topocomponent}.
\end{lemma}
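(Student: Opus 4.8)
The plan is to reduce the statement to Proposition \ref{stablesplitting} and Lemma \ref{topocomponent}. First I would use the homeomorphism $X(\R)\cong \coprod^{n(X)} (S^1)^g$ from Proposition \ref{realabelian1}(iii) together with the fact that adding a disjoint basepoint to a disjoint union gives a wedge: $X(\R)_+ = \big(\coprod^{n(X)} (S^1)^g\big)_+ \simeq \bigvee^{n(X)} \big((S^1)^g_+\big)$. Indeed, if $A = \coprod_j A_j$ then $A_+ = \bigvee_j (A_j)_+$, since a basepoint-preserving map out of $A_+$ is the same data as an unbased map out of $A$, which splits over the components. Since reduced suspension commutes with wedge sums, this already gives $\Sigma(X(\R)_+) \simeq \bigvee^{n(X)} \Sigma\big((S^1)^g_+\big)$.

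Next I would handle a single summand $\Sigma\big((S^1)^g_+\big)$. For any (nice, e.g. CW) based space $Z$ with basepoint $z_0$, adding a disjoint basepoint yields $Z_+ \simeq Z \vee S^0$ after we remember the old basepoint — more precisely, $Z_+$ is the mapping cone of $S^0 \hookrightarrow Z_+$ hitting the two points, and one gets a natural stable splitting $\Sigma(Z_+) \simeq \Sigma Z \vee S^1$; equivalently $\Sigma(Z_+) \simeq \Sigma Z \vee \Sigma S^0 = \Sigma Z \vee S^1$ because $Z_+ = Z \vee S^0$ as based spaces when $Z$ is already based (the wedge being taken at the disjoint basepoint versus the original basepoint is a homotopy equivalence). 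Applying this with $Z = (S^1)^g$ pointed in the standard way, I get $\Sigma\big((S^1)^g_+\big) \simeq S^1 \vee \Sigma\big((S^1)^g\big)$. Assembling over the $n(X)$ components gives
\begin{equation*}
    \Sigma(X(\R)_+) \simeq \bigvee^{n(X)} S^1 \vee \bigvee^{n(X)} \Sigma\big((S^1)^g\big),
\end{equation*}
which is the claimed splitting. The component count then follows: each $S^1$ contributes one wedge summand and, by Lemma \ref{topocomponent}, each $\Sigma\big((S^1)^g\big)$ contributes $2^g - 1$, for a total of $n(X)\big(1 + (2^g-1)\big) = n(X)\cdot 2^g$.

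The only genuinely non-formal input here is Proposition \ref{realabelian1}, which identifies $X(\R)$ topologically as a disjoint union of $g$-tori; everything afterward is a standard manipulation of stable splittings of products and of adjoined basepoints, valid for CW complexes by Proposition \ref{stablesplitting}. The mildest point requiring care — and the step I'd expect a referee to want spelled out — is the identification $\big(\coprod_j A_j\big)_+ \simeq \bigvee_j (A_j)_+$ and the passage $\Sigma(Z_+)\simeq \Sigma Z \vee S^1$; both are elementary once one is careful about basepoints, but they are where all the combinatorics of the component count is secretly generated, so I would state them explicitly rather than leave them implicit. No deep homotopy theory is needed beyond what is already invoked for Lemma \ref{topocomponent}.
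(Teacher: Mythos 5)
Your argument is correct and arrives at the same splitting, but via a non-inductive repackaging of what the paper does. The paper's proof writes $X(\R) \cong (S^1)^g \times (S^0)^d$ and runs an induction on $d$, citing Proposition~\ref{stablesplitting} at each step; you bypass the induction by first observing that adding a disjoint basepoint to a disjoint union gives a wedge, $\big(\coprod_j A_j\big)_+ \cong \bigvee_j (A_j)_+$, commuting $\Sigma$ past the wedge, and then peeling one $S^1$ off each summand $\Sigma\big((S^1)^g_+\big)$. The two routes rest on exactly the same inputs --- Proposition~\ref{realabelian1}(iii) for the homeomorphism type and Lemma~\ref{topocomponent} for the per-component splitting --- and indeed your wedge identity is precisely what the paper's inductive step amounts to once unwound. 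Your presentation isolates the component count $n(X)$ at the outset, treats the connected case $d=0$ uniformly (the paper's base case starts at $d=1$), and makes the combinatorics of the final count $n(X)\cdot 2^g$ transparent; so while it is not a different proof in substance, it is a cleaner organization of the same one.

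One point in your justification should be repaired. You explain $\Sigma(Z_+)\simeq \Sigma Z\vee S^1$ by asserting that ``$Z_+ = Z\vee S^0$ as based spaces,'' claiming the change of basepoint (disjoint point versus original basepoint of $Z$) is a homotopy equivalence. That is false for non-contractible $Z$: a based homotopy equivalence must carry the basepoint path-component to the basepoint path-component, but in $Z_+$ the basepoint is an isolated point while in $Z\vee S^0$ it lies in the $Z$-component, so no based homotopy equivalence can match them. (Likewise, ``$Z_+$ is the mapping cone of $S^0\hookrightarrow Z_+$'' is backwards; the cofiber of that inclusion is $Z$, not $Z_+$.) The correct and sufficient statement is your first one, argued via a splitting cofiber sequence: the inclusion $S^0\hookrightarrow Z_+$ sending the non-basepoint of $S^0$ to $z_0$ and the basepoint to the adjoined point has cofiber $Z$ and admits the retraction $Z_+\to S^0$ collapsing $Z$ onto $z_0$; this cofiber sequence splits after suspension (or in $\text{SH}$), giving $\Sigma(Z_+)\simeq S^1\vee\Sigma Z$. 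That is all the lemma uses, and since the lemma is stated as a \emph{stable} splitting, the stable form is exactly what you want to invoke.
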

\begin{proof}
    By Proposition \ref{realabelian1} (iii), we know that $X(\R)$ can be viewed as $(S^1)^g\times (S^0)^d$. We still use induction on the number of $S^0$ to prove this claim. For the base case $d=1$, Proposition \ref{stablesplitting}
    tells us that if $X(\R)=(S^1)^g\times S^0$, then $\Sigma X(\R)_+\simeq S^1\vee\Sigma (S^1)^g\vee S^1\vee \Sigma (S^1)^g$. And if we assume the claim holds for $d=k$, then we have $\Sigma ((S^1)^g\times(S^0)^d)_+\simeq\vee^{2^d}S^1\bigvee^{2^d}\Sigma (S^1)^g$.
    Then again by Proposition \ref{stablesplitting}, $\Sigma((S^1)^g\times(S^0)^{d+1})_+\simeq S^1\vee(\vee^{2^d-1}S^1\bigvee^{2^d}\Sigma (S^1)^g)\vee S^1\vee(\vee^{2^d-1}S^1\bigvee^{2^d}\Sigma (S^1)^g)=\vee^{2^{d+1}}S^1\bigvee^{2^{d+1}}\Sigma (S^1)^g$.
    So by induction we conclude the proof.
\end{proof}

The above splittings occur after taking suspension once, so it describe the stable splitting behavior of $X(\R)_+$ in classical stable homotopy category $\text{SH}$. And we still get this splitting after we pass to any localizing coefficients $\Lambda$.

Next, as we point out in the Remark \ref{conservative}, we need to think about whether the splitting in $\text{DM}(k)_{\Lambda}$ will give us the splitting in $\text{SH}(k)_{\Lambda}^+$. 

By the motivic Hurewicz theorem in \cite{Bachmann_20182}
\begin{eqnarray}\label{Hurewicz}
    \text{End}_{\text{SH}(k)_{\Lambda}^+}(\Sigma^{\infty,\infty}X_+)
    &=& \text{End}_{\text{SH}(k)_{\Lambda}^{\eta=0}}(\Sigma^{\infty,\infty}X_+)\\
    &=& \text{Hom}_{\text{SH}(k)_{\Lambda}}(\mathbb{I}_k, \Sigma^{\infty,\infty}X_+\wedge \mathcal{D}(X_+))/\eta\\
    &=& \text{Hom}_{DM(k)_{\Lambda}}(M_{gm}(X\times X), \Z(g)[2g])\\
    &=& \text{CH}^g(X\times X)_{\Lambda}
\end{eqnarray}
This identification tells us that if there is a decomposition of the diagonal class $[\Delta]\in {\text{CH}}^d(X\times X)_{\Lambda}$ as projectors, then we will also have a decomposition of $\text{Id}\in \text{End}_{\text{SH}(k)^{\eta=0}_{\Lambda}}(\Sigma^{\infty,\infty}X_+)$, which is the case by Remark \ref{conservative}. After applying $\text{Id}$ to $\Sigma^{\infty,\infty}X_+$, we will get a splitting of $\Sigma^{\infty,\infty}X_+$ in $\text{SH}(k)_{\Lambda}^+$.
If $2\in\Lambda^\times$, the functor $M:\text{SH}(k)_{\Lambda}^+\to\text{DM}(k)_{\Lambda}$ is conservative \cite[Corollary 4 and Theorem 9]{Bachmann_20182}. If we further fix $\Lambda=\Q$, it is an equivalence of category. So the splitting in Theorem \ref{theta} will lift the decomposition of rational motives of any curves.

With the above results, we now can formulate the main theorem:

\begin{theorem}
    If $X$ is a real abelian variety of dimension $g$ with a rational point $x_0:S^{0,0}\to X_+$. And it satisfies the condition in Theorem \ref{components1}, i.e., $X$ is an absolutely simple abelian variety over $\R$,
    admitting sufficiently many complex multiplications. We have the following splitting in $\text{SH}(\R)_{\Lambda}$ for $(2g)!\in\Lambda^{\times}$:
    \begin{equation*}
        X_+\sim \bigvee_{i=0}^{g-1}(\bigvee_{k=0}^{\lfloor\frac{i}{2}\rfloor} (S^{2k,k}\vee S^{2(k+g-i),k+g-i})\wedge\mathbb{J}_{i-2k}(X))\vee \bigvee_{k=0}^{\lfloor\frac{g}{2}\rfloor} (S^{2k,k}\wedge\mathbb{J}_{g-2k}(X)) \vee \bigvee^{n(X)}\bigvee^{g}_{i=0}\vee^{{g\choose {i}}} S^{i,0}
    \end{equation*}
    where $n(X)$ is the number of the connected components of $X(\R)$. A concrete formula of $n(X)$ is given by Theorem \ref{components1}. Moreover, if $\Lambda=\Q$, $\mathbb{J}_i(X)$ is a component of the motivic spectrum associated to a product of curves.
\end{theorem}
\begin{proof}
    To begin with, recall Proposition \ref{conclusion1}. With $(2g)!\in\Lambda^\times$, we have 
    \begin{eqnarray*}
        M(X_+)&=&M_{gm}(X)=Chow(X)\\
        &=&\oplus_{i=0}^{2g} M_{gm}^i(X)\\
        &=&\bigoplus_{i=0}^{g-1}\bigoplus_{k=0}^{\lfloor\frac{i}{2}\rfloor} (P^{i-2k}(X)(-k)\oplus P^{i-2k}(X)(-(k+g-i)) )\oplus\bigoplus_{k=0}^{\lfloor\frac{g}{2}\rfloor} P^{g-2k}(X)(-k)
    \end{eqnarray*}
    by Proposition \ref{tensor}, Theorem \ref{decomposition}, and Theorem \ref{Kunn1}.  
    And as mentioned in Remark \ref{twist}, we can lift Tate twisting (-1) to $\wedge S^{2,1}$. Let $\mathbb{J}_i(X)\in \text{SH}(\R)$ such that $M(\mathbb{J}_i(X))=P^i(X)$, so we get the splitting in 
    $\text{SH}(\R)_{\Lambda}^+$ by the arguments followed by (\ref{Hurewicz}) and the conservativity of the functor $M:\text{SH}(k)_{\Lambda}^+\to\text{DM}(k)_{\Lambda}$. In fact, if $\Lambda=\Q$, each $\mathbb{J}_i(X)$ is a component of the motivic spectrum associated to a product of curves up to $\mathbb{P}^1$-suspension because each submotive of abelian varieties is isomorphic to some submotive of product of curves up to Tate twist rationally \cite{abeliantype}. And the motive of product of curves is the tensor product of 
    motives of each curve, which can be lifted respectively to a splitting in $\text{SH}(\R)_\Q$ as in Theorem \ref{theta}. Notice that $P^0(X)=M^0_{gm}(X)=1$, we have $\mathbb{J}_0(X)=S^{0,0}$. 
    And as we pointed out, $\mathbb{J}_i(X)$ is dependent on $\mathbb{J}_1(X)$ for $0<i\leq g$. For the minus part, by Lemma \ref{topocomponent2}, we get a splitting in $\text{SH}_{\Lambda}$. Moreover, since we obtain $\text{SH}(\R)_{\Lambda}^-$ by inverting $\rho$ or $\eta$, that means we stably identify $S^{1,1}=\mathbb{G}_{m,+}$ and $S^{0,0}$ in 
    $\text{SH}(\R)^-$, so we can lift $S^i$ in the splitting of $X(\R)_+$ to $S^{i,0}$. Then combine the splittings in the plus and minus parts, we are done.
\end{proof}

Combined with the calculation in Example \ref{elliptic}, we get the splittings for elliptic curves.

\begin{corollary}
    In the case of elliptic curves $X$ over $\R$, i.e., abelian varieties over $\R$ of dimension 1, let $\text{End}(X_{\C})$ be the ring of 
    integers in the quadratic imaginary extension $L=\Q(\sqrt{d})$ of $\Q$, where $d$ is square-free, and let $\text{End}(X)=\Z$. Assume $X(\R)\neq\emptyset$. We have the following splitting in $\text{SH}(\R)_{\Lambda}$ for $2\in\Lambda^{\times}$:
    \begin{equation*}
        X_+\sim S^{0,0}\vee\mathbb{J}(X)\vee S^{2,1}\vee \bigvee^{n(X)}\vee S^{0,0}\vee S^{1,0}
    \end{equation*}
    where 
    \begin{equation*}
        n(X)=
        \begin{cases}
        1, & \text{if}\: d\equiv 1 \:\text{mod}\: 4,\\
        2, & \text{if}\: d\equiv 2 \:\text{mod}\: 4,\\
        1\:\text{or}\: 2, & \text{if}\: d\equiv 3 \:\text{mod}\: 4.
        \end{cases}
    \end{equation*}
\end{corollary}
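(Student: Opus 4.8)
The plan is to derive the corollary as the $g=1$ specialization of the preceding theorem, with the value of $n(X)$ read off from the discriminant computation of Section 6. First I would check that a CM elliptic curve $X/\R$ with $\mathrm{End}(X_\C)\cong B:=\mathcal O_L$, $L=\Q(\sqrt{d})$ and $d<0$ squarefree, meets the hypotheses of that theorem: it is absolutely simple because $X_\C$ has dimension $1$; it admits sufficiently many complex multiplications because $B$ has rank $2=2\dim X$; and $B$, being the ring of integers of a number field, is a Dedekind domain. With $K=\Q$ and $A=\Z=B\cap\Q$, the rings $A\subset B$ satisfy the conditions of Remark \ref{fact}, so $X\in\mathcal A_\R(B/A)$ and Theorem \ref{components1} applies. (The hypothesis $\mathrm{End}(X)=\Z$ is automatic: an endomorphism of $X$ over $\R$ acts $\R$-linearly on the one-dimensional real tangent space $T_0X$, so the differential representation gives a faithful map $\mathrm{End}(X)\hookrightarrow\R$, which cannot contain $\sqrt{d}$ with $d<0$.)

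Next I would substitute $g=1$ into the three wedge summands of the preceding theorem. In the first summand $i$ ranges only over $\{0\}$ and then $k$ only over $\{0\}$, giving $(S^{0,0}\vee S^{2,1})\wedge\mathbb J_0(X)$; since $P^0(X)=M^0_{gm}(X)=1$ one has $\mathbb J_0(X)=S^{0,0}$, so this contributes $S^{0,0}\vee S^{2,1}$. The second summand has only $k=0$ and contributes $\mathbb J_1(X)$, abbreviated $\mathbb J(X)$. The third summand, with $\binom{1}{0}=\binom{1}{1}=1$, contributes $\bigvee^{n(X)}(S^{0,0}\vee S^{1,0})$. Reassembling, and reordering the wedge summands, gives precisely the displayed splitting.

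It then remains to evaluate $n(X)=\mathrm{Card}\big(X(\R)/X(\R)^0\big)$. Since $g=1$ we have $2g=\mathrm{rank}(B)$ and the ideal $(2)$ of $A=\Z$ is prime, so Corollary \ref{components2} applies once $\mathrm{ord}_{(2)}(\D)$ is known, $\D$ being the discriminant of $B$ over $\Z$; the classical formula gives $\D=(d)$ if $d\equiv1\bmod 4$ and $\D=(4d)$ otherwise, and $d$ squarefree forces $d\bmod 4\in\{1,2,3\}$. If $d\equiv1\bmod4$ then $\mathrm{ord}_{(2)}(\D)=0$, so Corollary \ref{components2}(i) (equivalently Corollary \ref{2.10}) gives $n(X)=1$. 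If $d\equiv2\bmod4$ then $d=2m$ with $m$ odd, so $\mathrm{ord}_{(2)}(\D)=\mathrm{ord}_{(2)}(4d)=3$, and Corollary \ref{components2}(iii) gives $n(X)=2^g=2$. If $d\equiv3\bmod4$ then $d$ is odd, so $\mathrm{ord}_{(2)}(\D)=\mathrm{ord}_{(2)}(4d)=2$, and Corollary \ref{components2}(ii) gives $n(X)\in\{1,2^g\}=\{1,2\}$; both values genuinely occur, as recorded in Example \ref{elliptic} (for instance $y^2=x^3-x$ has two real components while $y^2=x^3+x$ has one, both with CM by $\Z[\sqrt{-1}]$). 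Remark \ref{2.6} guarantees $\mathrm{ord}_{(2)}(\D)\le 2\,\mathrm{ord}_{(2)}(2)+1=3$, so these three cases are exhaustive, matching the claimed $n(X)$.

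I do not expect a genuine obstacle here: the only new input over the preceding theorem is the arithmetic of the quadratic discriminant. The points that require care are the degenerate index ranges at $g=1$ together with the normalization $\mathbb J_0(X)=S^{0,0}$, so that the plus part yields exactly one copy each of $S^{0,0}$, $\mathbb J(X)$ and $S^{2,1}$; and verifying that $(2)$ remains prime in $\Z$, so that Corollary \ref{components2} applies verbatim rather than only in a multi-prime form.
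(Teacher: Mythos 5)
Your proof is correct and follows essentially the same route as the paper's, which simply refers to the $g=1$ specialization of the preceding theorem together with the discriminant computation already recorded in Example \ref{elliptic}; your version just makes the degenerate index ranges and the Corollary \ref{components2} case analysis explicit. The paper's proof additionally remarks that for curves the plus/minus splittings already exist integrally (via Theorem \ref{theta} or Theorem \ref{integral}), with $2\in\Lambda^{\times}$ needed only for the $\text{SH}^{\pm}$ decomposition, and that $\mathbb{J}(X)$ is the Jacobian, but these are side observations rather than missing steps in your argument.
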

\begin{proof}
    This is basically a combination of the previous example and the above theorem. It worths to mention here that inverting 2 is not necessary
    to get either splitting in the plus and minus part. By Theorem \ref{theta} or simply Theorem \ref{integral}, we get the splitting already in the integral case. We need $2\in\Lambda^{\times}$ only to get $\text{SH}(\R)_{\Lambda}^+$ and $\text{SH}(\R)_{\Lambda}^-$.
    And as pointed out in Theorem \ref{theta}, $\mathbb{J}(X)$ corresponds to the Jacobian variety.
\end{proof}

\begin{remark}\label{chowwitt1}
    We can also regard this splitting from another point of view, e.g. the decomposition of finite Chow-Witt correspondences. There is a quadratic refinement $\widetilde{\text{DM}}(k)$ of $\text{DM}(k)$ \cite{bachmann2022milnorwitt} and constructed by replacing Voevodsky's category of finite Chow correspondences
    with a category of ``finite Chow-Witt correspondences". Passing to coefficients $\Lambda$ with $2\in\Lambda^{\times}$ , Chow-Witt group $\widetilde{\text{CH}}^*(X)_{\Lambda}$ of smooth varieties over $\R$ is isomorphic to $\text{CH}^*(X)_{\Lambda}\times H^*_{Sing}(X(\R),\Lambda)$. So in particular,
    if $X$ is an abelian variety and $(2\text{dim}(X))!\in\Lambda^\times$, we can get the decomposition of $\widetilde{M}(X)$ (the object represented by $X$) in $\widetilde{\text{DM}}(\R)_\Lambda$ as we have the decomposition of $\Lambda$-linear Chow correspondences by Theorem \ref{decomposition}, Theorem \ref{Kunn1} and also by the K\"unneth formula and Poincar\'e duality of singular cohomology theory. There is a functor $\tilde{\gamma}_*: \widetilde{\text{DM}}(k)\to\text{SH}(k)$ which is monoidal, conservative and exact \cite[Chapter 3, Lemma 1.2.3]{bachmann2022milnorwitt},
    so by applying $\tilde{\gamma}_*$ we also get the corresponding splitting in $\text{SH}(\R)_{\Lambda}$.
\end{remark}

\bibliographystyle{alphaurl} 
\bibliography{Reference}

\end{document}